\newtheorem{definition}{Definition}
\newtheorem{theorem}[definition]{Theorem}
\newtheorem{lemma}[definition]{Lemma}
\newtheorem{corollary}[definition]{Corollary}
\newtheorem{proposition}[definition]{Proposition}
\newtheorem{remark}[definition]{Remark}
\renewcommand{\H}{{\mathrm{H}}}
\begin{document}
\title{On Tate Milnor-Witt Motives}
\author{Jean Fasel}
\address{Jean Fasel\\Institut Fourier-UMR 5582 \\ Universit\'e Grenoble-Alpes, CNRS \\ CS 40700 \\ 38058 Grenoble Cedex 9 \\ France}
\email{jean.fasel@univ-grenoble-alpes.fr}
\author{Nanjun Yang\(^\dag\)}
\thanks{\(^\dag\)Partially supported by the National Natural Science Foundation of China (Grant No. 12201336)}
\address{Nanjun Yang\\Beijing Institute of Mathematical Sciences and Applications\\Huairou District\\Beijing China}
\email{ynj.t.g@126.com}

\begin{abstract}    % type your abstract below
Smooth projective \(\mathbb{G}_m\)-varieties with isolated rational fixed points admit Tate Milnor-Witt motives. Over Euclidean fields, we give a splitting formula of such motives, which reduces the computation of their Chow-Witt groups to that of their Chow groups and cohomologies of Witt sheaf.
\end{abstract}

\maketitle

%%%%%%%%%%%%%%%%%%%%   Start of main body of article
\section{Introduction}\label{Introduction}
Throughout the text, we work over a base field \(k\) which is infinite, perfect and has characteristic different from \( 2\).
In that context, recall that one can define the category of Milnor-Witt motives \(\widetilde{\mathrm{DM}}(k,\mathbb{Z})\) (aka. MW-motives, \cite{BCDFO}) which is a refinement of Voevodsky's category of motives \(\mathrm{DM}(k,\mathbb{Z})\) in the sense that the adjunction 
\[
\mathrm{SH}(k)\leftrightarrows \mathrm{DM}(k,\mathbb{Z})
\]
factorizes into adjunctions
\[
\mathrm{SH}(k)\leftrightarrows\widetilde{\mathrm{DM}}(k,\mathbb{Z}) \leftrightarrows \mathrm{DM}(k,\mathbb{Z}).
\]
We refer the reader to the introduction of \cite{BCDFO} for more information, and only state one of the main properties of the middle category. Its tensor unit, written $\mathbb{Z}$ for simplicity (but sometimes $\mathrm{H}\tilde{\mathbb{Z}}$ or $\mathbf{H}_{\mathrm{MW}}$ elsewhere in the literature), represents the so-called \emph{Milnor-Witt motivic cohomology} in the sense that
\[
[\tilde{M}(X),\mathbb{Z}(j)[i]]_{\widetilde{\mathrm{DM}}(k,\mathbb{Z})}\simeq \mathrm{H}^{i,j}_{\mathrm{MW}}(X,\mathbb{Z})
\]
where $\tilde{M}(X)$ is the MW-motive associated to any smooth scheme $X$. If $X$ is defined over the field of real numbers, there exists complex and real realization functors that yield homomorphisms 
\[
t_{\mathbb{C}}:\mathrm{H}^{i,j}_{\mathrm{MW}}(X,\mathbb{Z})\to \mathrm{H}^{i,j}_{\mathrm{M}}(X_{\mathbb{C}},\mathbb{Z})\to \mathrm{H}_{\mathrm{sing}}^{2i}(X(\mathbb{C}),\mathbb{Z})
\]
where the right-hand group is the singular cohomology group associated to the analytic space $X(\mathbb{C})$ and the middle group is the ordinary motivic cohomology group associated to the scheme $X_{\mathbb{C}}:=X\times_{\mathrm{Spec}(\mathbb{R})}\mathrm{Spec}(\mathbb{C})$, and
\[
t_{\mathbb{R}}:\mathrm{H}^{i,j}_{\mathrm{MW}}(X,\mathbb{Z})\to  \mathrm{H}_{\mathrm{sing}}^{i-j}(X(\mathbb{R}),\mathbb{Z})
\] 
where now the right-hand side denotes the singular cohomology group associated to the real manifold $X(\mathbb{R})$. These homomorphisms somewhat explain why even quite simple cellular varieties usually have a non free Milnor-Witt cohomology, indeed it has to take into account that this is not the case over the real numbers, where cells can be glued into a non-trivial way.  

In \cite{Y1} and \cite{Y2}, it was however shown that the MW-motives of Grassmannians and complete flags are direct sums of \(\mathbb{Z}(i)[2i]\) and \(\mathbb{Z}/\eta(i)[2i]\), where the latter can be though of a motivic analogue of coefficients modulo $2$.  As a result, as we will see below (Proposition \ref{blowup}), computing the MW-motives of projective bundles and blowups is tantamount to computing extensions of Thom spaces of the base scheme. If we keep doing such operations from a point, it amounts to study extensions of \(\mathbb{Z}(i)[2i]\).

Another important situation is when the projective scheme \(X\in \mathrm{Sm}/k\) admits a \(\mathbb{G}_m\)-action with isolated rational fixed points. Then, the well-known Bia\l ynicki-Birula decomposition says that there is a filtration
\[\emptyset=X_{-1}\subseteq X_0\subseteq\cdots\subseteq X_n=X\]
of closed subsets such that \(X_i\setminus X_{i-1}\) are affine spaces. Applying the Gysin triangle to each successive embedding \(X_i\setminus X_{i-1}\subseteq X\setminus X_{i-1}\) shows that all Thom spaces of (the MW-motive of) \(X\) come from extensions of \(\mathbb{Z}(i)[2i]\). We call such kind of MW-motives \textit{Tate} (see Definition \ref{tatedef} for more details).

What do Tate MW-motives look like? Suppose that we have a distinguished triangle
\[A\longrightarrow B\longrightarrow C\xrightarrow{\partial} A[1]\]
where \(A,B,C\) are Tate. In the category of Voevodsky's motives \(\mathrm{DM}(k,\mathbb{Z})\), the triangle splits since
\[[\mathbb{Z}(i)[2i],\mathbb{Z}(j)[2j+1]]_{DM}=0\]
for \(i,j\in\mathbb{N}\) and thus any Tate motive in \(\mathrm{DM}(k,\mathbb{Z})\) is a direct sum of \(\mathbb{Z}(i)[2i]\), i.e. split. In the category of MW-motives  \(\widetilde{\mathrm{DM}}(k,\mathbb{Z})\), we however have
\[[\mathbb{Z}(i)[2i],\mathbb{Z}(j)[2j+1]]_{\widetilde{\mathrm{DM}}(k,\mathbb{Z})}=\begin{cases}0&\text{ if $i\neq j+1$},\\\mathbf{W}(k)\cdot\eta&\text{ if $i= j+1$},\end{cases}\]
where the \(\mathbf{W}(k)\) is the Witt group of the field \(k\) and \(\eta:\mathbb{Z}(1)[1]\longrightarrow\mathbb{Z}\) is the Hopf element. If the former group is not a field, the question becomes nontrivial since one is led to consider all ideals in \(\mathbf{W}(k)\). In case \(k\) is Euclidean (an assumption we will keep for the rest of the paper), namely an ordered field in which all positive elements admit a square root, we have \(\mathbf{W}(k)=\mathbb{Z}\). Then it is natural to consider the blocks \(\mathbb{Z}/l\eta(i)[2i],l\in\mathbb{Z}\), namely the mapping cone of \(l\eta\). Writing \(l=2^ts\) for \(s\) odd, we obtain a decomposition
\[\mathbb{Z}/l\eta=\mathbb{Z}/2^t\eta\oplus\mathbb{Z}/\mathbf{s}\]
where \(\mathbf{s}=\sum_{i=0}^{s-1}\epsilon^i,\epsilon=-<-1>\) which is enough to describe all the Tate motives. Our main result (Theorem \ref{tate}) then reads as follows:
\begin{theorem}
Every Tate MW-motive \(A\) (over Euclidean fields) is a direct sum of \(\mathbb{Z}(i)[2i]\), \(\mathbb{Z}/2^t\eta(i)[2i]\) and \(\mathbb{Z}/\mathbf{s}[i]\) where \(s\) is odd.
\end{theorem}
The theorem shows that one can reduce the computation of MW-motivic cohomologies of such motives to that of the cohomologies of Witt sheaf, since in practice the Chow groups are usually well-known (e.g. flag varieties of type A (\cite{Y2}) and D (\cite{HMX}), \(\overline{M}_{0,n}\) (\cite{T})).

Before explaining the rough lines of the proof, let us say a word on the \emph{a priori} restrictive assumption that $k$ is Euclidean. Most of the objects we consider in this article are defined over $\mathbb{Z}$, and the still elusive construction of $\widetilde{\mathrm{DM}}(\mathbb{Z},\mathbb{Z})$ should satisfy all the main properties of its known version over a field. In particular, the result about extensions of split objects above should prove true, i.e. 
\[[\mathbb{Z}(i)[2i],\mathbb{Z}(j)[2j+1]]_{\widetilde{\mathrm{DM}}(\mathbb{Z},\mathbb{Z})}=\begin{cases}0&\text{ if $i\neq j+1$},\\ \mathbf{W}(\mathbb{Z})\cdot\eta&\text{ if $i= j+1$},\end{cases}\]
and one could infer from $ \mathbf{W}(\mathbb{Z})=\mathbb{Z}$ that the results of this paper should apply to \emph{all} MW-motives of objects defined over $\mathbb{Z}$ such as partial flag varieties, etc.

Our main tool for proving the above theorem is a careful analysis of the Bockstein spectral sequence arising from the exact couple
\begin{equation}\label{eqn:Bockstein}
	\xymatrix
	{
		\mathrm{H}^{*,*}_{\mathrm{MW}}(-,\mathbb{Z})\ar[rr]^{\eta}	&																				&\mathrm{H}^{*,*}_{\mathrm{MW}}(-,\mathbb{Z})\ar[ld]\\
																						&E_1^{*,*}(-)\ar[lu]	&
	},
\end{equation}
where 
\[E_1^{p,q}(X)=[X,\mathbb{Z}/\eta(q)[p]]_{\widetilde{\mathrm{DM}}(k,\mathbb{Z})}.\] 
%is the first page and \(H^{*,*}_{MW},H^{*,*}_M,H^{*,*}_W\) are the MW-motivic, motivic and \(\mathbf{I}^j\)-motivic cohomologies, respectively. 
This spectral sequence is the motivic version of the Pardon spectral sequence discussed e.g. in \cite{AF4}, with the advantage that all differentials are known explicitly rather than only the diagonal part. We give in Proposition \ref{1} and Corollary \ref{identification} a general method computing MW-motivic cohomologies in terms of motivic cohomologies and cohomologies of Witt sheaf, given a uniform upper bound of the \(\eta\)-torsions. The method is particularly effective when there is no genuine \(\eta^2\)-torsions, which is expected to be true in the case of proper smooth curves over local fields and \(\mathbb{R}\) (on going work). For a recent conjecture on the upper bound of \(2\)-torsions, see \cite{MW}.

The main theorem shows that in the context above, the spectral sequence degenerates at the \(E_r\)-page (\(r\geq 2\)) if and only if all \(l\) are of the form \(2^{r'-2}s,2\leq r'\leq r\) where \(s\) is odd. From the second page, there is a K\"unneth property for Tate MW-motives (see Proposition \ref{bockstein compare}), which extends the classical result in \cite{Bro}.
\begin{proposition}
Let \(\rho=[-1]\in\mathbf{K}_1^{\mathrm{MW}}(k)\) and consider \(E_i(A)^a:=\oplus E^{*+a,*}_i(A)\) as a \(\mathbb{Z}/2[\rho]\)-module chain complex. We have
\[E_2(A)\otimes_{\mathbb{Z}/2[\rho]}E_2(B)=E_2(A\otimes B)\]
as complexes and
\[E_i(A)\otimes^L_{\mathbb{Z}/2[\rho]}E_i(B)=E_i(A\otimes B)\]
in \(D(\mathbb{Z}/2[\rho]\textrm{-mod})\) for every \(i\geq 2\).
\end{proposition}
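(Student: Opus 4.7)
The plan is to promote a Künneth isomorphism up through the Bockstein spectral sequence, exploiting that $\mathbb{Z}/\eta$ is a homotopy-commutative ring spectrum in $\widetilde{\mathrm{DM}}(k,\mathbb{Z})$ and that $\mathbb{Z}/2[\rho]$ is a PID. The cofiber triangle $\mathbb{Z}(1)[1]\xrightarrow{\eta}\mathbb{Z}\to\mathbb{Z}/\eta$ is multiplicative, so $E_1^{*,*}(-)$ is valued in bigraded modules over $E_1^{*,*}(k)=\mathbb{Z}/2[\rho]$---this coefficient ring being computed via the long exact $\eta$-sequence and the fact that mod-$2$ Milnor K-theory of a Euclidean field is polynomial on $\rho=[-1]$. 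The external product supplies a natural Künneth comparison
\[E_1(A)\otimes_{\mathbb{Z}/2[\rho]} E_1(B)\to E_1(A\otimes B),\]
which in fact extends to a morphism of the full Bockstein spectral sequences.

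I would first verify the Künneth comparison at $E_1$ for Tate MW-motives. On the basic building block $\mathbb{Z}(i)[2i]$, $E_1$ is a rank-one free $\mathbb{Z}/2[\rho]$-module generated in bidegree $(2i,i)$, so the iso is clear for sums of split pieces. For a general Tate $A$, one inducts on the length of the Tate filtration: each defining triangle $A'\to A\to A''$ yields a long exact sequence on $E_1$, and tensoring with $E_1(B)$ for $B$ a sum of split pieces (where $E_1(B)$ is free, in particular flat) preserves exactness, so a five-lemma argument propagates the iso. By a symmetric induction in $B$, one obtains the $E_1$-Künneth for arbitrary Tate $A,B$.

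Since $d_1$ is the $\eta$-Bockstein and hence a derivation with respect to the ring structure on $E_1$, the $E_1$-Künneth iso is a map of $d_1$-complexes over $\mathbb{Z}/2[\rho]$. Taking $d_1$-cohomology and invoking the Künneth formula for chain complexes over the PID $\mathbb{Z}/2[\rho]$ produces a comparison
\[E_2(A)\otimes_{\mathbb{Z}/2[\rho]}E_2(B)\to E_2(A\otimes B)\]
whose kernel and cokernel are controlled by $\mathrm{Tor}_1^{\mathbb{Z}/2[\rho]}(E_2(A),E_2(B))$. The strict equality claimed in the first half of the proposition amounts to vanishing of this Tor term, which is the key technical point. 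For higher pages, the argument iterates: each $d_r$ is a derivation by multiplicativity of the Postnikov tower of iterated $\eta$-Bocksteins, so the Künneth comparison remains a chain map at every page. Beyond $E_2$, the $\mathrm{Tor}_1$-contribution can no longer be ruled out but is absorbed by passing to the derived tensor product, yielding $E_i(A)\otimes^L_{\mathbb{Z}/2[\rho]}E_i(B)\simeq E_i(A\otimes B)$ in $D(\mathbb{Z}/2[\rho]\textrm{-mod})$ for every $i\geq 2$.

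The main obstacle is the Tor-vanishing at $E_2$, which requires showing that $E_2(A)$ is flat (equivalently, $\rho$-torsion-free) over $\mathbb{Z}/2[\rho]$ for Tate $A$. Since Tate motives are not a priori split, this cannot simply be read off from the basic pieces and instead requires a direct analysis of $d_1$ on the $E_1$-page of each stage of the Tate filtration. Once this flatness is established, the remainder of the argument is essentially formal spectral-sequence bookkeeping.
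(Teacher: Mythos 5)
Your strategy is genuinely different from the paper's, but it runs into serious obstacles that the paper's proof deliberately avoids. The paper does not attempt a natural Künneth comparison at all: the proof of Proposition~\ref{bockstein compare}(2) explicitly says it constructs an \emph{abstract} isomorphism, reducing to $A,B\in\{\mathbb{Z},\mathbb{Z}/2^j\eta\}$ via Proposition~\ref{bounded} (which supplies a map $A\to\oplus\mathbb{Z}(i)[2i]\oplus\oplus\mathbb{Z}/2^j\eta(i)[2i]$ inducing isomorphisms on all $E_i$ for $i\geq 2$), then writes $E_i$ of each building block as one of the explicit two-term complexes $S$, $S_j$ over $\mathbb{Z}/2[\rho]$ and verifies the tensor-product identities $S\otimes S=S\oplus S[1]$, $S_j\otimes S_j\simeq S_j\otimes S$ by hand. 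The flatness you worry about is not ``a direct analysis of $d_1$ on each stage of the Tate filtration''; it is immediate once $E_2(A)$ is identified with $E_2$ of a sum of split pieces and read off Corollary~\ref{dim}, where $\oplus_tE_2^{2n+t,n+t}(A)$ is seen to be a free $\mathbb{Z}/2[\rho]$-module.

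The concrete gaps in your approach are the following. First, your starting point is wrong: $E_1^{*,*}(k)$ is not $\mathbb{Z}/2[\rho]$. The $E(\mathrm{W})_1$ page is built out of $\mathrm{H}^{*,*}_{\mathrm{M}}(-,\mathbb{Z}/2)$, whose coefficient ring is $\mathbb{Z}/2[\rho,\tau]$; the motivic Bott element $\tau$ only disappears after taking $d_1$-cohomology (cf.\ Propositions~\ref{w} and~\ref{bockstein}, where $E_2$ is identified with a quotient modulo $\tau$), so a $\mathbb{Z}/2[\rho]$-Künneth at $E_1$ is meaningless as stated. Second, the $E(\mathrm{MW})_1$ page (which is what the MW Bockstein literally has at page one) is controlled by $\mathbf{K}^{\mathrm{MW}}$, and the paper explicitly remarks, just before Proposition~\ref{leibniz1}, that the exterior product on $E(\mathrm{MW})_1$ does \emph{not} satisfy the Leibniz formula---so ``$d_1$ is a derivation, hence the comparison is a chain map'' is not available. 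Even the Leibniz formula for $\beta_i$, $i\geq 2$, is not obtained abstractly from a multiplicative Postnikov tower: Propositions~\ref{leibniz1}--\ref{leibniz} prove it by the same reduction to $\mathbb{Z}/2^j\eta$ that you were trying to avoid. Third, the final step---``the $\mathrm{Tor}_1$ contribution is absorbed by passing to the derived tensor product''---is not an argument. The claim $E_i(A)\otimes^L E_i(B)\simeq E_i(A\otimes B)$ is an assertion about specific chain complexes in $D(\mathbb{Z}/2[\rho])$; having a Künneth short exact sequence for their cohomology does not produce a quasi-isomorphism, and nothing you have written bridges that gap. To salvage your route you would ultimately need the same reduction to building blocks and the same explicit identifications that the paper uses, at which point the ``natural'' comparison map offers no shortcut.
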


Moreover, there is a Cartesian square (see Proposition \ref{bockstein compare})
\[
	\xymatrix
	{
		\mathrm{H}^{*,*}_{\mathrm{MW}}(A,\mathbb{Z})\ar[r]\ar[d]	&\mathrm{H}^{*,*}_{\mathrm{M}}(A,\mathbb{Z})\ar[d]\\
		\mathrm{H}^{*,*}_{\mathrm{W}}(A,\mathbb{Z})\ar[r]				&\mathrm{H}^{*,*}_{\mathrm{M}}(A,\mathbb{Z}/2),
	}
\]
which extends the result in \cite{HW}. Here, we write $\mathrm{H}^{*,*}_{\mathrm{MW}}, \mathrm{H}^{*,*}_{\mathrm{M}}$ and $\mathrm{H}^{*,*}_{\mathrm{W}}$ for respectively MW-motivic, motivic and \(\mathbf{I}^j\)-motivic cohomologies.  

To find a scheme with Tate MW-motive and a nontrivial Bockstein in arbitrary high pages, we consider \(\mathbb{P}(E)\) where \(E\) is a vector bundle on \(\mathbb{HP}^1\) (see \cite{PW} for definition). If we denote by \(\mathcal{V}_n(X)\) the set of isomorphism classes of vector bundle of rank \(n\) on \(X\in \mathrm{Sm}/k\), our result is the following (see Corollary \ref{vb}):
\begin{theorem}
The Euler class induces a bijection
\[\mathcal{V}_2(\mathbb{HP}^1)\cong\mathbf{GW}(k)/k^{\times}\]
and the second Chern class induces a bijection
\[\mathcal{V}_n(\mathbb{HP}^1)\cong\mathbb{Z}, n\geq 3.\]
\end{theorem}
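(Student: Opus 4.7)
The plan is to apply $\mathbb{A}^1$-obstruction theory for $BGL_n$ in the style of Morel and Asok--Fasel, using the Panin--Walter decomposition $\tilde{M}(\mathbb{HP}^1)=\mathbb{Z}\oplus\mathbb{Z}(2)[4]$ as the key cohomological input. From this decomposition I read off $\mathrm{Pic}(\mathbb{HP}^1)=\mathrm{H}^{2,1}_{\mathrm{MW}}(\mathbb{HP}^1,\mathbb{Z})=0$, $\widetilde{\mathrm{CH}}^2(\mathbb{HP}^1)=\mathrm{H}^{4,2}_{\mathrm{MW}}(\mathbb{HP}^1,\mathbb{Z})=\mathbf{GW}(k)$, $\mathrm{CH}^2(\mathbb{HP}^1)=\mathbb{Z}$, and vanishing of Chow--Witt cohomology in bidegrees $(p,q)$ with $p>4$ or far from the diagonal. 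Since $\mathrm{Pic}(\mathbb{HP}^1)=0$ and $\mathcal{O}(\mathbb{HP}^1)^{\times}=k^{\times}$, every rank-$n$ bundle admits an $SL_n$-reduction and any two reductions differ by $k^{\times}$, so $\mathcal{V}_n(\mathbb{HP}^1)=[\mathbb{HP}^1,BSL_n]_{\mathbb{A}^1}/k^{\times}$.

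For rank $2$, the Moore--Postnikov tower of $BSL_2$ gives the Euler class $[\mathbb{HP}^1,BSL_2]_{\mathbb{A}^1}\to \widetilde{\mathrm{CH}}^2(\mathbb{HP}^1)=\mathbf{GW}(k)$. All further obstruction and difference groups live in Milnor--Witt cohomology of $\mathbb{HP}^1$ of bidegree outside the support of $\tilde{M}(\mathbb{HP}^1)$, and so vanish; hence the Euler class is already a bijection. A direct computation shows that rescaling the $SL_2$-structure by $u\in k^{\times}$ multiplies the Euler class by $\langle u\rangle\in\mathbf{GW}(k)$, and passing to the $k^{\times}$-quotient yields $\mathcal{V}_2(\mathbb{HP}^1)\cong\mathbf{GW}(k)/k^{\times}$. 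For rank $n\geq 3$, I would invoke Morel's stabilization theorem: the bound $p\leq 4$ in the MW-motive forces $\mathcal{V}_n(\mathbb{HP}^1)\to\mathcal{V}_{n+1}(\mathbb{HP}^1)$ to be a bijection starting at $n=3$, reducing to the stable invariant $K_0^{\mathrm{red}}(\mathbb{HP}^1)$, which the motive decomposition identifies with $\mathbb{Z}$, detected by $c_2$.

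The main obstacle I foresee is that $\mathbb{HP}^1$ is only quasi-projective, while the Morel/Asok--Fasel classification theorems are typically stated over smooth affine bases. I would handle this either by exhibiting $\mathbb{HP}^1$ as $\mathbb{A}^1$-equivalent to a smooth affine model (in the Panin--Walter framework $\mathbb{HP}^1$ is built from an affine symplectic Grassmannian, which should suffice), or by passing through a Jouanolou device on $\mathbb{HP}^1$ and transferring the classification back. A secondary subtlety is confirming that the $k^{\times}$-action on Euler classes is exactly multiplication by $\langle u\rangle$, with no further identifications coming from inner automorphisms of $SL_2$ — this reduces to the change-of-trivialization formula for Euler classes and does not require new input, but it must be spelled out to get precisely the quotient $\mathbf{GW}(k)/k^{\times}$ claimed.
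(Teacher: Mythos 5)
Your overall strategy — obstruction theory for $\mathrm{BSL}_n$ followed by quotienting by the $k^\times$-action coming from the fibration $\mathrm{BSL}_n\to\mathrm{BGL}_n\to \mathrm{B}\mathbb{G}_m$ — is the same as the paper's, and the treatment of the $k^\times$-action by $\langle u\rangle$-multiplication matches. But there are two concrete gaps, and one unnecessary worry.

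First, the worry about $\mathbb{HP}^1$ being quasi-projective is misplaced: in the Panin--Walter framework $\mathbb{HP}^1=HGr(1,2)$ is already a smooth affine scheme (it is $\mathbb{A}^1$-weakly equivalent to the affine quadric $Q_4$), so the Asok--Hoyois--Wendt representability statement $[\mathbb{HP}^1_+,\mathrm{BGL}_n]_{\mathbb{A}^1}\cong\mathcal{V}_n(\mathbb{HP}^1)$ applies directly. No Jouanolou device is needed, and in fact using one would create new problems since $\mathcal{V}_n$ is only $\mathbb{A}^1$-invariant on the affine category.

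Second, and more seriously, the motive decomposition $\tilde M(\mathbb{HP}^1)=\mathbb{Z}\oplus\mathbb{Z}(2)[4]$ is not enough to kill the higher obstruction groups. In the Moore--Postnikov tower the obstruction and difference groups are Nisnevich cohomology groups $\mathrm{H}^{j}_{\mathrm{Nis}}(X,\pi_i^{\mathbb{A}^1}(\mathrm{BSL}_n))$, and for $i\geq 3$ the sheaves $\pi_i^{\mathbb{A}^1}(\mathrm{BSL}_n)$ are complicated strictly $\mathbb{A}^1$-invariant sheaves, not Milnor--Witt $K$-theory sheaves; their Nisnevich cohomology is not ``Milnor--Witt cohomology of $\mathbb{HP}^1$'' and is not read off the MW-motive. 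What the paper uses instead is the stronger \emph{space-level} $\mathbb{A}^1$-weak equivalence $\mathbb{HP}^1\simeq(\mathbb{P}^1)^{\wedge 2}$ from Asok--Doran--Fasel, which yields, for \emph{any} strictly $\mathbb{A}^1$-invariant sheaf $\mathbf{F}$, that $[\mathbb{HP}^1_+,K(\mathbf{F},j)]_{\mathcal{H}_\bullet(k)}$ vanishes unless $j=0,2$ and equals $\mathbf{F}_{-2}(k)$ when $j=2$. This is what collapses the tower. You need this equivalence, not just the motive.

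Third, invoking Morel's stabilization for $n\geq 3$ overshoots: since $\mathbb{HP}^1$ has dimension $4$, the general stabilization $\mathcal{V}_n\to\mathcal{V}_{n+1}$ is only guaranteed to be bijective for $n$ above the dimension, which does not cover $n=3,4$. The paper instead reruns the same Postnikov argument for $\mathrm{BSL}_n$ ($n\geq 3$), using $\pi_2^{\mathbb{A}^1}(\mathrm{BSL}_n)=\mathbf{K}_2^{\mathrm{M}}$ so that $[\mathbb{HP}^1_+,\mathrm{BSL}_n^{(2)}]\cong(\mathbf{K}_2^{\mathrm{M}})_{-2}(k)=\mathbb{Z}$, with the same collapse of all higher layers. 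If you want to reduce to a stable invariant you must still verify, via the sphere structure, that obstructions for the maps $\mathrm{BSL}_3\to\mathrm{BSL}_4\to\cdots$ vanish, which amounts to the same computation.
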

So any vector bundle \(E\) of rank two whose Euler class \(e(E)\) has image \(2^n\in\mathbf{W}(k)\) gives a nontrivial Bockstein in the \((n+1)\)st-page of \(\mathbb{P}(E)\).

For convenience, we give a list of frequently used notations in this paper:
\[\begin{array}{c|c}
\mathrm{Sm}/k	&\textrm{The category of smooth and separated schemes over \(k\)}\\
\mathbb{Z}(X)		&\textrm{Motive of \(X\)}\\
C(f)		&\textrm{Mapping cone of \(f\)}\\
\pi		&\textrm{The reduction modulo \(2\) map}\\
\eta		&\textrm{The Hopf element}\\
\epsilon&-<-1>\\
\tau		&\textrm{The motivic Bott element}\\
\iota		&\textrm{The quotient map of }\tau[1]\\
\rho		&[-1]\in\mathbf{K}_1^{MW}(k)\\
A/l\eta	&A\otimes C(l\eta)
\end{array}\]

\section{The MW-Bockstein Spectral Sequence and its Degeneracy}

\subsection{Reminders on exact couples}
By an exact couple \((D,E,i,j,k)\) we mean a diagram of abelian groups
\[
	\xymatrix
	{
		D\ar[rr]^{i}	&	&D\ar[ld]^j\\
							&E\ar[lu]^{k}	&
	}
\]
such that there is an exact sequence
\[\cdots\xrightarrow{k}D\xrightarrow{i}D\xrightarrow{j}E\xrightarrow{k}D\xrightarrow{i}\cdots.\]
We refer to \cite[\S 5.9]{Wei94} for more information about these notions.
For \(n\geq 1\), we set \(D_n:=\mathrm{Im}(i^n)\), \(Z_n:=k^{-1}(\mathrm{Im}(i^n))\), \(B_n:=j(\ker(i^n))\) and \(E_{n+1}:=Z_n/B_n\) as usual (\(n\geq 1\)). Further, consider \(D[i^{-1}]:=\varinjlim(D\xrightarrow{i}D\xrightarrow{i}\cdots)\), endowed with its canonical homomorphism \(p:D\to D[i^{-1}]\) and set \(\overline{D}:=\mathrm{Im}(p)\). We note that $\ker(p)=\ker(i^{\infty}):=\cup_{n\in\mathbb{N}}\ker(i^n)$ and then that there is an exact sequence
\[
0\to \ker(i^\infty)\to D\xrightarrow{p} \overline D\to 0.
\]
Combining with the exact sequence
\[
0\to D_1\to D\xrightarrow{j}\ker(k)\to 0
\] 
and chasing in the relevant diagrams, we obtain an exact sequence
\[
0\to D_1\cap\ker(i^\infty)\to D\xrightarrow{(j,p)} \ker(k)\oplus \overline D\xrightarrow{ (\pi, -\overline j)} \ker(k)/j(\ker(i^\infty))\to 0
\]
where $\pi$ is the projection and $\overline j$ is induced by $j$. On the other hand, it is clear that the inclusion $\ker(k)\subset \cap_{n\in\mathbb{N}}Z_n$ induces an isomorphism
\[
c:\ker(k)/j(\ker(i^\infty))\to E_{\infty}.
\]
and we finally obtain an exact sequence
\begin{equation}\label{eqn:exactcouple}
0\to D_1\cap\ker(i^\infty)\to D\xrightarrow{(j,p)} \ker(k)\oplus \overline D\xrightarrow{ (c\circ\pi, -c\circ\overline j)} E_{\infty}\to 0.
\end{equation}

\begin{definition}
Let \((D,E,i,j,k)\) be an exact couple. We say that it has \emph{up to \(i^r\)-torsion} if \(ker(i^{r+1})=ker(i^r)\). If $r=1$, we simply say that the exact couple \((D,E,i,j,k)\) has up to \(i\)-torsion, in place of \(i^1\)-torsion.
\end{definition}

\begin{proposition}\label{1}
The following assertions are verified:
\begin{enumerate}
\item If \((D,E,i,j,k)\) has up to \(i^r\)-torsion for some \(r\geq 1\), the associated spectral sequence degenerates at the (\(r+1\))st-page, i.e. there is a canonical isomorphism $\phi_r:E_{r+1}\to E_{\infty}$. Further, the exact sequence \eqref{eqn:exactcouple} reduces to an exact sequence
\[
0\longrightarrow D_1\cap \ker(i^{\infty})\longrightarrow D\xrightarrow{(j,p)} \ker(k)\oplus\overline{D}\to E_{r+1}\longrightarrow 0.
\]
\item If \((D,E,i,j,k)\) has up to \(i\)-torsion, the exact sequence \eqref{eqn:exactcouple} induces a Cartesian square
\[
\xymatrix{D\ar[r]^-j\ar[d]_-p & \ker(k)\ar[d] \\
\overline D\ar[r] & E_2.}
\]
\item If \(\ker(j,p)=D_1\cap \ker(i^{r-1})\), then \((D,E,i,j,k)\) has up to \(i^r\)-torsion.
\end{enumerate}
\end{proposition}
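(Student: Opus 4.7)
The plan is to derive all three statements from the universal exact sequence \eqref{eqn:exactcouple}, which always holds and in particular identifies $\ker(j,p)=D_1\cap\ker(i^{\infty})$. All three parts then become short diagram chases.

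For (1), I would first observe that the up-to-$i^r$-torsion assumption propagates inductively to $\ker(i^n)=\ker(i^r)=\ker(i^{\infty})$ for all $n\geq r$: given $x\in\ker(i^{n+1})$ with $n\geq r$, one has $i(x)\in\ker(i^n)=\ker(i^r)$ by induction, whence $x\in\ker(i^{r+1})=\ker(i^r)$. The crucial consequence is the identity $Z_r=\ker(k)$. Indeed, $\ker(k)\subseteq Z_r$ holds unconditionally, and conversely if $k(e)=i^r(d)$ then exactness at the first $D$ gives $i(k(e))=0$, so $d\in\ker(i^{r+1})=\ker(i^r)$ and thus $k(e)=i^r(d)=0$. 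Combined with $B_r=j(\ker(i^r))=j(\ker(i^{\infty}))$, this yields $E_{r+1}=Z_r/B_r=\ker(k)/j(\ker(i^{\infty}))\xrightarrow{c}E_{\infty}$, so substituting in \eqref{eqn:exactcouple} produces the claimed four-term exact sequence with $E_{r+1}$ in place of $E_\infty$.

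For (2), I would apply (1) with $r=1$ to identify $E_2=E_{\infty}$. The last map of \eqref{eqn:exactcouple}, namely $\ker(k)\oplus\overline{D}\to E_2$ sending $(x,y)$ to $c(\pi(x)-\overline{j}(y))$, has kernel exactly the pullback $\ker(k)\times_{E_2}\overline{D}$; so the Cartesian square amounts to $(j,p)$ being an isomorphism onto this pullback, i.e.\ to the injectivity of $(j,p)$. The $i$-torsion assumption gives $\ker(j,p)=D_1\cap\ker(i^{\infty})=D_1\cap\ker(i)=0$: if $x=i(y)$ lies in $\ker(i)$, then $y\in\ker(i^2)=\ker(i)$, so $x=0$.

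Finally, for (3), the identification $\ker(j,p)=D_1\cap\ker(i^{\infty})$ from \eqref{eqn:exactcouple} is unconditional, so the hypothesis forces $D_1\cap\ker(i^{\infty})=D_1\cap\ker(i^{r-1})$. Picking $x\in\ker(i^{r+1})$ gives $i(x)\in D_1\cap\ker(i^{r})\subseteq D_1\cap\ker(i^{\infty})=D_1\cap\ker(i^{r-1})$, whence $i^r(x)=i^{r-1}(i(x))=0$ and so $x\in\ker(i^r)$. I do not expect any serious obstacle; the only mildly subtle step is the identity $Z_r=\ker(k)$ in (1), which is what turns the formal vanishing of the differentials $d_n$ for $n\geq r+1$ into the effective statement $E_{r+1}\simeq E_{\infty}$ needed to rewrite \eqref{eqn:exactcouple}.
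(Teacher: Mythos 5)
Your proposal is correct and takes essentially the same route as the paper. The only cosmetic difference is in part (1): the paper argues that the differential vanishes on each $E_{r'+1}$ for $r'\geq r$ (taking $x$ with $k(x)=i^{r'}(y)$, deducing $i^{r'+1}(y)=0$, hence $k(x)=0$), whereas you phrase the same computation as the identity $Z_r=\ker(k)$ together with $B_r=j(\ker(i^\infty))$ and then invoke the isomorphism $c$; these are two presentations of the same chain of reasoning. Parts (2) and (3) match the paper's diagram chases essentially verbatim, including the reduction in (2) to the injectivity of $(j,p)$ and the direct verification $\mathrm{Im}(i)\cap\ker(i)=0$.
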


\begin{proof}
Let us prove the first assertion. If \(x\in E_{r'+1},r'\geq r\), we have \(k(x)=i^{r'}(y)\), and then \(i^{r'+1}(y)=0\). So \(k(x)=0\) by the condition, and the spectral sequence degenerates at the (\(r+1\))st-page. The assertion on the exact sequence is a direct consequence of the first one and we pass to (2). It suffices to prove that $D_1\cap\ker(i^\infty)=0$ in that case to conclude. Now, we have $\ker(i^\infty)=\ker(i)$  and we are reduced to check that $\mathrm{Im}(i)\cap\ker(i)=0$: If $x=i(y)$ and $i(x)=0$, we obtain $i^2(y)=0$ and thus $x=i(y)=0$. 

For the last statement, suppose that $x\in D$ is such that $i^{r+1}(x)=0$. Then \(i(x)\in D_1\cap \ker(i^{\infty})=D_1\cap \ker(i^{r-1})\) and it follows that \(i^r(x)=0\).
\end{proof}

We now consider higher versions of the homomorphism $j$ which will allow us to give a sufficient condition for an element $x\in D$ to be trivial. For $n\geq 1$, observe that mapping $i^n(x)$ to the class $\overline{j(x)}$ in $E_{n+1}$ induces a well-defined homomorphism
\[
j^{(n)}:D_n\to E_{n+1}.
\] 
and we set $j^{(0)}:=j$. For $n\geq 1$,
 a direct computation shows that the  sequence
\[
0\to D_{n+1}\cap \ker(i^{\infty})\to D_{n}\cap \ker(i^{\infty})\xrightarrow{j^{(n)}}E_{n+1}
\]
is exact.
If $x\in D_{n}\cap \ker(i^{\infty})$, we have $x=i^n(y)$ and $i^s(x)=0$ for some $s\in\mathbb{N}$. Consequently, $y\in \ker(i^{\infty})$ and $j^{(n)}(x)\in j(\ker(i^{\infty})):=B_{\infty}$. Conversely, any element in $z\in B_{\infty}$ is of the form $z=j(x)$ for some $x\in \ker(i^{\infty})$ and we have $z=j^{(n)}(i^n(x))\in \mathrm{Im}(j^{(n)})$. Consequently, we have an exact sequence
\begin{equation}\label{filtration}
0\to D_{n+1}\cap \ker(i^{\infty})\to D_{n}\cap \ker(i^{\infty})\xrightarrow{j^{(n)}}E_{n+1}\to Z_n/B_{\infty}\to 0.
\end{equation}

\begin{corollary}\label{identification}
Suppose that \((D,E,i,j,k)\) has up to \(i^r\)-torsion, and let $x\in D$. Then, $x=0$ if and only if $x\in \ker(j^{(n)})$ for any $0\leq n<r$.
\end{corollary}

\begin{proof}
As $\ker(i^{\infty})=\ker(i^{r})$ if \((D,E,i,j,k)\) has up to \(i^r\)-torsion, we obtain $D_{n}\cap \ker(i^{\infty})=0$ for $n\geq r$ and the claim follows easily.
\end{proof}

\subsection{The Bockstein spectral sequence for Witt motivic cohomology}

We now provide some spectra on which we want to use the formalism developed above. For this, recall that Bachmann defined in \cite{B} the spectra \(\mathrm{H}\widetilde{\mathbb{Z}},\H_{\mu}\mathbb{Z},\H_{\mathrm{W}}\mathbb{Z},\H_{\mu}\mathbb{Z}/2\) as the respective effective covers of the homotopy modules \(\mathbf{K}^{\mathrm{MW}}_*,\mathbf{K}^{\mathrm{M}}_*,\mathbf{K}^{\mathrm{W}}_*,\mathbf{K}^{\mathrm{M}}_*/2\). It turns out that the spectra \(\H\widetilde{\mathbb{Z}}\) and \(\H_{\mu}\mathbb{Z}\) represent respectively the cohomology theories \(\H^{*,*}_{\mathrm{MW}}\) and \(\H^{*,*}_{\mathrm{M}}\) respectively (see e.g. \cite[Proposition 4.3]{Y1}). 

We consider the exact couple 
\[
	\xymatrix
	{
		\oplus_{p,q}\H_{\mathrm{W}}\mathbb{Z}\wedge S^{p,q}\ar[rr]^{\eta}	&																				&\oplus_{p,q}\H_{\mathrm{W}}\mathbb{Z}\wedge S^{p,q}\ar[ld]\\
																						&\oplus_{p,q}\H_{\mathrm{W}}\mathbb{Z}/\eta\wedge S^{p,q}\ar[lu]	&
	},
\]
which induces a Bockstein spectral sequence $E({\mathrm{W}})^{*,*}$ with
\[E({\mathrm{W}})_1^{p,q}(F_{\bullet})=\H^{p,q}_{\mathrm{M}}(F_{\bullet},\mathbb{Z}/2)\oplus \H^{p+2,q}_{\mathrm{M}}(F_{\bullet},\mathbb{Z}/2)\]
for any simplicial sheaf \(F_{\bullet}\) by the computation in \cite{B}. For further use, let us compute the differential map in the first page, which is determined by the composite
\[
\H_{\mathrm{W}}\mathbb{Z}/\eta\longrightarrow \H_{\mathrm{W}}\mathbb{Z}\wedge\mathbb{P}^1\longrightarrow (\H_{\mathrm{W}}\mathbb{Z}/\eta)\wedge\mathbb{P}^1,
\]
namely a homomorphism
\[
D=\begin{pmatrix}a&b\\c&d\end{pmatrix}:\H_{\mu}\mathbb{Z}/2\oplus \H_{\mu}\mathbb{Z}/2[2]\longrightarrow \H_{\mu}\mathbb{Z}/2\wedge\mathbb{P}^1\oplus \H_{\mu}\mathbb{Z}/2\wedge\mathbb{P}^1[2].
\]
It is proved in \cite[Proposition 2.3]{Y2} that \(a=Sq^2\) and \(b=\tau\), where $\tau$ is the motivic Bott element (i.e. the class corresponding to $-1$ under the identification $\{\pm{1}\}\simeq\H_{\mathrm{M}}^{0,1}(k,\mathbb{Z}/2)$), and we now compute \(c\) and \(d\). 
\begin{lemma}
The differential $D$ in the Bockstein spectral sequence for Witt motivic cohomology is given by
\[
D=\begin{pmatrix}Sq^2 &\tau \\ Sq^3Sq^1 & Sq^2+\rho Sq^1\end{pmatrix},
\]
where $\rho=\{-1\}\in \H_{\mathrm{M}}^{1,1}(k,\mathbb{Z}/2)$.
\end{lemma}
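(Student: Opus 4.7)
The plan is to determine $c$ and $d$ as elements of the motivic mod $2$ Steenrod algebra by combining bidegree constraints with the spectral-sequence identity $D\circ D=0$. The entries $c$ and $d$ are cohomology operations on $\H^{*,*}_{\mathrm{M}}(-,\mathbb{Z}/2)$ of bidegrees $(4,1)$ and $(2,1)$ respectively. Consulting Voevodsky's description of the motivic Steenrod algebra over a base whose mod $2$ motivic cohomology contains $\tau$ and $\rho$, the only admissible monomial (or $\tau^a\rho^b$-multiple thereof) of bidegree $(4,1)$ is $Sq^3Sq^1$, while the $(2,1)$-operations are spanned over $\mathbb{Z}/2$ by $\{Sq^2,\rho Sq^1\}$. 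Hence one can write $c=\alpha Sq^3Sq^1$ and $d=\beta Sq^2+\gamma\rho Sq^1$ for constants $\alpha,\beta,\gamma\in\mathbb{Z}/2$ to be determined.

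To pin down $\alpha$, the key input is that $D$ is the differential on the $E_1$-page of a spectral sequence, so $D\circ D=0$. The $(1,1)$-entry of $D^2$ reads $Sq^2\circ Sq^2+\tau\cdot c=0$, i.e.\ $\tau c=Sq^2Sq^2$; invoking the motivic Adem relation $Sq^2Sq^2=\tau Sq^3Sq^1$ forces $\alpha=1$, so $c=Sq^3Sq^1$. For $d$, I would turn to the $(1,2)$-entry of $D^2$, which says $Sq^2(\tau x)+\tau\cdot d(x)=0$ for every class $x$. Expanding the first term via the motivic Cartan formula $Sq^2(\tau x)=Sq^2(\tau)\cdot x+\tau Sq^2(x)+\tau Sq^1(\tau)Sq^1(x)$, and using $Sq^1(\tau)=\rho$, one obtains $Sq^2(\tau)\cdot x+\tau Sq^2(x)+\tau\rho Sq^1(x)=\beta\tau Sq^2(x)+\gamma\tau\rho Sq^1(x)$. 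Since $\mathrm{id}$, $\tau Sq^2$ and $\tau\rho Sq^1$ are linearly independent operations in bidegree $(2,2)$, comparing coefficients yields $Sq^2(\tau)=0$, $\beta=\gamma=1$, and hence $d=Sq^2+\rho Sq^1$.

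The main hurdle will be the clean invocation of the motivic Adem and Cartan identities, which both carry $\tau$-twists absent in the topological setting; these can be read off from Voevodsky's foundational work on motivic Steenrod operations, and the vanishing $Sq^2(\tau)=0$ actually drops out of the computation as a byproduct. A less economical alternative would be to evaluate $c$ and $d$ on a universal example like $X=\mathrm{B}\mu_2$ where generators in the relevant bidegrees are explicit, but the structural approach via $D^2=0$ is cleaner and avoids ad hoc case checking.
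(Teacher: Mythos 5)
Your argument is correct and follows essentially the same strategy as the paper: identify the admissible monomials in the relevant bidegrees so that $c$ and $d$ have a known shape, then squeeze out the coefficients from $D^2=0$ using the motivic Adem relation $Sq^2Sq^2=\tau Sq^3Sq^1$ and the Cartan formula for $Sq^2(\tau\,\cdot)$. The one genuine (if small) difference: to pin down the coefficient of $Sq^2$ in $d$, the paper reads off the $(2,1)$-entry of $D^2\binom{1}{0}$ and uses the Adem relation $Sq^3Sq^1Sq^2=Sq^5Sq^1$, whereas you extract both coefficients of $d$ simultaneously (and $Sq^2(\tau)=0$ as a byproduct) from the upper-right entry of $D^2\binom{0}{1}$. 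That is a touch more economical. One minor imprecision to tighten up: an operation of bidegree $(2,1)$ is a priori $v\,Sq^2+w\,Sq^1$ with $v\in\H^{0,0}_{\mathrm{M}}(k,\mathbb{Z}/2)$ and $w\in\H^{1,1}_{\mathrm{M}}(k,\mathbb{Z}/2)=k^\times/(k^\times)^2$; you should not assume from the start that $w$ is a $\mathbb{Z}/2$-multiple of $\rho$. Your comparison of coefficients still yields $w=\rho$ with this more general ansatz (using that $\tau$ is not a zero-divisor and the freeness of the motivic Steenrod algebra over $\H^{*,*}_{\mathrm{M}}(k,\mathbb{Z}/2)$), so the conclusion is unaffected.
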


\begin{proof}
We know from the generators of the Steenrod algebra that
\[c=uSq^3Sq^1, d=vSq^2+wSq^1\]
where \(u,v\in \H^{0,0}_{\mathrm{M}}(k,\mathbb{Z}/2)\) and \(w\in \H_{\mathrm{M}}^{1,1}(k,\mathbb{Z}/2)\). Since \(D^2=0\), we have
\[
D^2\begin{pmatrix}1\\0\end{pmatrix}=D\begin{pmatrix}a\\c\end{pmatrix}=\begin{pmatrix}Sq^2Sq^2+\tau uSq^3Sq^1\\uSq^3Sq^1Sq^2+dc\end{pmatrix}=0.
\]
Since \(Sq^2Sq^2=\tau Sq^3Sq^1\), we have \(\tau(u+1)=0\) hence \(u=1\). On the other hand, the equations
\[
dc=(vSq^2+wSq^1)Sq^3Sq^1=vSq^2Sq^3Sq^1=vSq^5Sq^1
\]
\[
Sq^3Sq^1Sq^2=Sq^3Sq^3=Sq^5Sq^1
\]
imply that \(v=1\). Next, we get from
\[
D^2\begin{pmatrix}0\\1\end{pmatrix}=D\begin{pmatrix}b\\d\end{pmatrix}=\begin{pmatrix}Sq^2\tau+\tau Sq^2+\tau wSq^1\\\cdots\end{pmatrix}=0
\]
and
\[
Sq^2\tau=\tau Sq^2+\tau\rho Sq^1
\]
that \(\tau(w+\rho)=0\), and hence that \(w=\rho\). Finally, \(c=Sq^3Sq^1\) and \(d=Sq^2+\rho Sq^1\).
\end{proof}

In \cite[Definition 2.11]{Y2}, we defined the so-called Bockstein cohomology
\[
E^{p,q}(F_{\bullet})=\frac{\ker(Sq^2) (\textrm{mod }\tau)}{\mathrm{Im}(Sq^2) (\textrm{mod }\tau)}
\]
associated to any pointed simplicial sheaf \(F_{\bullet}\). We note that there is an obvious homomorphism 
\[
f_{p,q}:E_2({\mathrm{W}})^{p,q}(F_{\bullet})\to E^{p,q}(F_{\bullet})
\]
defined by $\overline{(x,y)}\mapsto\overline{x}$ where \(x\in \H^{p,q}_{\mathrm{M}}(F_{\bullet},\mathbb{Z}/2)\) and \(y\in \H^{p+2,q}_{\mathrm{M}}(F_{\bullet},\mathbb{Z}/2)\).

\begin{proposition}\label{w}
If the motivic Bott element \(\tau\) is a non-zero-divisor in \(\H^{*,*}_{\mathrm{M}}(F_{\bullet},\mathbb{Z}/2)\), the morphism 
\[
f_{p,q}\colon E_2({\mathrm{W}})^{p,q}(F_{\bullet})\to E^{p,q}(F_{\bullet})
\]
is an isomorphism for any $p,q\in\mathbb{Z}$.
\end{proposition}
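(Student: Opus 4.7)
The plan is to establish that $f_{p,q}$ is an isomorphism by analyzing cycles and boundaries on the $E_1$-page explicitly, with the non-zero-divisor hypothesis on $\tau$ entering at two key points.

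Unwinding the differential $D$ from the previous lemma, a cycle in $E_1(\mathrm{W})^{p,q}$ is a pair $(x,y)\in \H^{p,q}_{\mathrm{M}}(F_\bullet,\mathbb{Z}/2)\oplus \H^{p+2,q}_{\mathrm{M}}(F_\bullet,\mathbb{Z}/2)$ satisfying
\[
Sq^2(x)=\tau y\quad\text{and}\quad Sq^3Sq^1(x)=(Sq^2+\rho Sq^1)(y),
\]
and a boundary has the form $D(u,v)=(Sq^2(u)+\tau v,\,Sq^3Sq^1(u)+(Sq^2+\rho Sq^1)(v))$. Since $f_{p,q}$ sends $\overline{(x,y)}$ to $\overline{x}$, well-definedness is immediate: a cycle condition implies $x\in\ker(Sq^2)\pmod{\tau}$, and a boundary lies in $\mathrm{Im}(Sq^2)\pmod{\tau}$.

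For surjectivity, given a representative $x$ of a class in $E^{p,q}$, the equation $Sq^2(x)=\tau y$ uniquely determines $y$ by the non-zero-divisor hypothesis, and the first kernel equation is then satisfied. I would verify the second by multiplying it through by $\tau$: the motivic Adem relation $Sq^2Sq^2=\tau Sq^3Sq^1$ converts $\tau Sq^3Sq^1(x)$ into $Sq^2(\tau y)$, which expands via the commutation relation $Sq^2\tau=\tau Sq^2+\tau\rho Sq^1$ recalled in the previous lemma into $\tau Sq^2(y)+\tau\rho Sq^1(y)$, and the remaining terms cancel in pairs in characteristic $2$. Applying the non-zero-divisor property once more removes the $\tau$ factor and yields the desired identity.

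For injectivity, suppose $(x,y)\in\ker(D)$ with $\overline{x}=0$ in $E^{p,q}$, so $x=Sq^2(u)+\tau v$ for some $u,v$. Forming $D(u,v)=(x,y')$, which lies in $\ker(D)$ by $D^2=0$, the first-coordinate relations give $\tau y=Sq^2(x)=\tau y'$, whence $y=y'$ by the non-zero-divisor hypothesis; consequently $(x,y)\in\mathrm{Im}(D)$. The main obstacle is the cancellation in the surjectivity step, which depends on combining the motivic Adem relation with the precise $\tau$-correction in the commutation of $Sq^2$ with multiplication by $\tau$; the assumption that $\tau$ acts without zero divisors is then what allows us to descend the resulting identity from a $\tau$-multiple to the identity itself.
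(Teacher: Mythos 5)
Your proposal is correct and follows essentially the same approach as the paper: use the non-zero-divisor hypothesis to solve $Sq^2(x)=\tau y$ for surjectivity and to match the second components for injectivity, with the motivic Adem relation $Sq^2Sq^2=\tau Sq^3Sq^1$ and the $Sq^2\tau$-commutation as the key identities. The only cosmetic difference is that in the injectivity step you invoke $D^2=0$ to see that $D(u,v)$ is a cycle, whereas the paper expands $Sq^2(Sq^2(a)+\tau b)$ directly — both routes immediately give the same cancellation.
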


\begin{proof}
For \((x,y)\in \H^{p,q}_{\mathrm{M}}(F_{\bullet},\mathbb{Z}/2)\oplus \H^{p+2,q}_{\mathrm{M}}(F_{\bullet},\mathbb{Z}/2)\), we have
\[\tau Sq^3Sq^1(x)+\tau Sq^2(y)+\tau\rho Sq^1(y)=Sq^2(Sq^2(x)+\tau y).\]
If \(Sq^2(x)+\tau y=0\) we obtain 
\[\tau(Sq^3Sq^1(x)+Sq^2(y)+\rho Sq^1(y))=0\]
and thus $Sq^3Sq^1(x)+Sq^2(y)+\rho Sq^1(y)=0$
as well since $\tau$ is a non-zero-divisor. It follows that $f_{p,q}$ is onto.  On the other hand, if \(\overline{(x,y)}=0\), there are \(a\in \H^{p-2,q-1}_{\mathrm{M}}(F_{\bullet},\mathbb{Z}/2)\) and \(b\in \H^{p,q-1}_{\mathrm{M}}(F_{\bullet},\mathbb{Z}/2)\) such that
\[Sq^2(a)+\tau b=x\]
 and then
\[\tau Sq^3Sq^1(a)+\tau Sq^2(b)+\tau\rho Sq^1(b)=Sq^2(Sq^2(a)+\tau b)=Sq^2(x)=\tau y.\]
So
\[Sq^3Sq^1(a)+Sq^2(b)+\rho Sq^1(b)=y\]
as well and the map is injective.
\end{proof}

We note here that the condition of the proposition is satisfied if \(F_{\bullet}\) is a direct sum of motives of the form \(\mathbb{Z}/2(q)[p]\) in \(\mathrm{DM}(k,\mathbb{Z}/2)\) by \cite[Theorem 6.17]{V1}.

\subsection{The Bockstein spectral sequence for Milnor-Witt motivic cohomology}

We now pass to our main object of study, i.e. the spectrum $\mathrm{H}\widetilde{\mathbb{Z}}$ (for whom a more explicit model can be found in \cite[Chapter 6, Section 4]{BCDFO}), obtaining a Bockstein spectral sequence $E(\mathrm{MW})^{p,q}$. 

In the sequel, we will denote by \(\pi:\H^{*,*}_{\mathrm{M}}(-,\mathbb{Z})\longrightarrow \H^{*,*}_{\mathrm{M}}(-,\mathbb{Z}/2)\) the reduction modulo \(2\) map and by $\iota$ the quotient by $\tau$ map, i.e. the map sitting in the exact triangle
\[
\H_{\mu}\mathbb{Z}/2[1]\xrightarrow{\tau[1]}\H_{\mu}\mathbb{Z}/2(1)[1]\xrightarrow{\iota} C(\tau)\to \H_{\mu}\mathbb{Z}/2[2].
\]

\begin{proposition}\label{inj}
Suppose that \(\tau\) is a non-zero-divisor in \(\H^{*,*}_{\mathrm{M}}(F_{\bullet},\mathbb{Z}/2)\). If we define \(R^{p,q}(F_{\bullet})\) by the Cartesian square
\[
	\xymatrix@C=3em
	{
		R^{p,q}(F_{\bullet})\ar[r]\ar[d]													&\H^{p+2,q+1}_{\mathrm{M}}(F_{\bullet},\mathbb{Z})\ar[d]^-{\iota\circ\pi}\\
		\H^{p,q}_{\mathrm{M}}(F_{\bullet},\mathbb{Z})\ar[r]_-{\iota\circ Sq^2\circ\pi}	&\H^{p+2,q+1}_{\mathrm{M}}(F_{\bullet},\mathbb{Z}/2)/\tau,
	}
\]
we have a short exact sequence
\[0\longrightarrow\frac{\H^{*+1,*+1}_{\mathrm{M}}(F_{\bullet},\mathbb{Z}/2)}{\mathrm{Im}(Sq^2\circ\pi)+\mathrm{Im}(\pi)+\mathrm{Im}(\tau)}\longrightarrow E_1({\mathrm{MW}})^{*,*}(F_{\bullet})\longrightarrow R^{*,*}(F_{\bullet})\longrightarrow 0.\]
\end{proposition}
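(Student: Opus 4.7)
My plan is to apply the functor $-/\eta$ to the Cartesian square of spectra $\H\widetilde{\mathbb{Z}}=\H_\mu\mathbb{Z}\times_{\H_\mu\mathbb{Z}/2}\H_{\mathrm{W}}\mathbb{Z}$ recalled earlier, and to exploit the resulting Mayer--Vietoris sequence. Since stable Cartesian squares are also cocartesian, the functor $-/\eta$ preserves the property, so
\[
\xymatrix@C=3em
{
\H\widetilde{\mathbb{Z}}/\eta \ar[r]\ar[d]	&\H_\mu\mathbb{Z}/\eta\ar[d]\\
\H_{\mathrm{W}}\mathbb{Z}/\eta\ar[r]		&\H_\mu\mathbb{Z}/2/\eta
}
\]
is again Cartesian in $\mathrm{SH}(k)$. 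Applying $[F_\bullet,-(q)[p]]$ yields a Mayer--Vietoris long exact sequence, from which I extract the short exact sequence
\[
0\longrightarrow [F_\bullet,\H_\mu\mathbb{Z}/2/\eta(q)[p-1]]/\mathrm{Im}(\phi-\psi)\longrightarrow E_1({\mathrm{MW}})^{p,q}(F_\bullet)\longrightarrow \ker(\phi-\psi)\longrightarrow 0,
\]
where $\phi,\psi$ are the two maps into $[F_\bullet,\H_\mu\mathbb{Z}/2/\eta(q)[p]]$ induced by the reductions in the Cartesian square modulo $\eta$.

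Next, I would identify the outer terms. Since $\eta$ vanishes in $\H^{-1,-1}_{\mathrm{M}}(k,\mathbb{Z})$ and in $\H^{-1,-1}_{\mathrm{M}}(k,\mathbb{Z}/2)$, it acts trivially on $\H_\mu\mathbb{Z}$ and on $\H_\mu\mathbb{Z}/2$, so their cofibers split as $\H_\mu\mathbb{Z}/\eta\simeq \H_\mu\mathbb{Z}\oplus\H_\mu\mathbb{Z}(1)[2]$ and $\H_\mu\mathbb{Z}/2/\eta\simeq \H_\mu\mathbb{Z}/2\oplus\H_\mu\mathbb{Z}/2(1)[2]$; meanwhile, the formula for $E_1({\mathrm{W}})^{p,q}$ already recalled is, after Bachmann, equivalent to the splitting $\H_{\mathrm{W}}\mathbb{Z}/\eta\simeq \H_\mu\mathbb{Z}/2\oplus\H_\mu\mathbb{Z}/2[2]$. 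Choosing compatible splittings and noting that motivic cohomology operations of the relevant bidegrees are restricted to multiples of the identity, of $Sq^2$, and of $\tau$, the map $\phi$ takes the form $(a,b)\mapsto (\pi(a),\pi(b))$ while $\psi$ takes the form $(c,d)\mapsto (c,Sq^2(c)+\tau d)$; residual scalar ambiguities can be pinned down by naturality against the Witt Bockstein differential $D$ computed in the preceding Lemma.

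With $\phi$ and $\psi$ explicit, the rest is a direct verification. For the cokernel, $\phi-\psi$ surjects onto the first factor by varying $c$, and the diagonalization $(x,y)\sim (0,y-Sq^2(x))$ identifies the quotient with $\H^{p+1,q+1}_{\mathrm{M}}(F_\bullet,\mathbb{Z}/2)/(\mathrm{Im}(\pi)+\mathrm{Im}(Sq^2\pi)+\mathrm{Im}(\tau))$, as desired. For the kernel, the equations $c=\pi(a)$ and $\pi(b)=Sq^2\pi(a)+\tau d$, together with the non-zero-divisor hypothesis on $\tau$, determine $d$ uniquely from $(a,b)$ and leave the single condition $\iota\pi(b)=\iota Sq^2\pi(a)$ in $\H^{p+2,q+1}_{\mathrm{M}}(F_\bullet,\mathbb{Z}/2)/\tau$, which is precisely the Cartesian square defining $R^{p,q}(F_\bullet)$. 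I expect the main obstacle to be the clean identification of $\psi$: bigrading alone leaves room for extra Steenrod-operation corrections (for instance a $\rho Sq^1$ term in the bottom-left entry), and ruling them out requires a careful compatibility check against $D$ together with the explicit description of the morphism $\H_{\mathrm{W}}\mathbb{Z}\to\H_\mu\mathbb{Z}/2$ underlying the Cartesian square.
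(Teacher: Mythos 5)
Your overall plan is sound and it parallels the structure of the paper's argument, but it takes a more self-contained route. The paper quotes the single distinguished triangle $\H\widetilde{\mathbb{Z}}/\eta\to \H_\mu\mathbb{Z}/\eta\to C(\tau[1])$ from the proof of \cite[Theorem 4.13]{Y1}, together with the fact that the second map is $(\iota\circ\pi,\iota\circ Sq^2\circ\pi)$ under the splitting $\H_\mu\mathbb{Z}/\eta\simeq\H_\mu\mathbb{Z}\oplus\H_\mu\mathbb{Z}(1)[2]$; the long exact sequence then immediately produces the short exact sequence once $[F_\bullet,C(\tau[1])(q)[p]]$ is identified with $\H^{p+2,q+1}_{\mathrm{M}}(F_\bullet,\mathbb{Z}/2)/\tau$ via the non-zero-divisor hypothesis. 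You instead start one step earlier, from Bachmann's Cartesian square of spectra, apply $-/\eta$, and run the Mayer--Vietoris long exact sequence. That is in effect a re-derivation of the cited ingredient rather than an appeal to it: the cofiber of $\H\widetilde{\mathbb{Z}}/\eta\to\H_\mu\mathbb{Z}/\eta$ in your square is exactly the $C(\tau[1])$ of the paper, so the two proofs meet. What your route buys is independence from the explicit statement of \cite[Theorem 4.13]{Y1}; what it costs is that you must identify the induced map $\psi:\H_W\mathbb{Z}/\eta\to\H_\mu\mathbb{Z}/2/\eta$ in matrix form, which the paper silently outsources.

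On that point your outline has one genuine gap, and you flag it correctly yourself. You posit $\psi(c,d)=(c,Sq^2(c)+\tau d)$ with respect to the splittings $\H_W\mathbb{Z}/\eta\simeq\H_\mu\mathbb{Z}/2\oplus\H_\mu\mathbb{Z}/2[2]$ and $\H_\mu\mathbb{Z}/2/\eta\simeq\H_\mu\mathbb{Z}/2\oplus\H_\mu\mathbb{Z}/2(1)[2]$, but you do not actually pin this down. It is worth observing that most of the ambiguity is harmless: an extra $\rho Sq^1$ summand in the $(2,1)$-entry dies in the present computation because $Sq^1\circ\pi=0$, and a $\rho^2$ summand lands inside $\mathrm{Im}(\pi)$, so neither changes the kernel nor the cokernel. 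The one coefficient that genuinely matters is the $(2,2)$-entry $\epsilon\in\{0,\tau\}$: if $\epsilon=0$, the kernel of $\phi-\psi$ acquires an extra free parameter $d$ and the statement fails. So you do need to show $\epsilon=\tau$, and the compatibility check against $D$ (or a direct argument using the explicit map $\mathbf{K}^{\mathrm{W}}_*\to\mathbf{K}^{\mathrm{M}}_*/2$ underlying Bachmann's square) is the right idea, but it has to be carried out to make the proof complete.
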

\begin{proof}
By the proof of \cite[Theorem 4.13]{Y1}, there is a distinguished triangle
\[\H\widetilde{\mathbb{Z}}/\eta\longrightarrow \H_{\mu}\mathbb{Z}/\eta\longrightarrow C(\tau[1])\longrightarrow \H\widetilde{\mathbb{Z}}/\eta[1],\]
which induces an exact sequence
\begin{equation}\small\label{e}\cdots\longrightarrow E_1^{p,q}(F_{\bullet})\xrightarrow{i} \H^{p+2,q+1}_{\mathrm{M}}(F_{\bullet},\mathbb{Z})\oplus \H^{p,q}_{\mathrm{M}}(F_{\bullet},\mathbb{Z})\xrightarrow{(\iota\circ\pi,\iota\circ Sq^2\circ\pi)}[F_{\bullet},C(\tau[1])(q)[p]]_{\mathrm{DM}(k,\mathbb{Z}/2)}\longrightarrow\cdots.\end{equation}
Since \(\tau\) is a non-zero-divisor, the last term is identified with \(\H^{p+2,q+1}_{\mathrm{M}}(F_{\bullet},\mathbb{Z}/2)/\tau\) and the statement follows.
\end{proof}

\begin{definition}
We say that a MW-motive has up to \(\eta^r\)-torsion if its Bockstein spectral sequence in \(\widetilde{\mathrm{DM}}(k,\mathbb{Z})\) does.
\end{definition}

\begin{definition}\label{tatedef}
The category of \emph{mixed Tate MW-motives} is defined to be the smallest thick (triangulated) subcategory of \(\widetilde{\mathrm{DM}}(k,\mathbb{Z})\) containing \(\mathbb{Z}(i)[2i]\) for all \(i\in\mathbb{N}\). The category of \emph{Tate MW-motives} is defined to be the smallest full subcategory of \(\widetilde{\mathrm{DM}}(k,\mathbb{Z})\) containing \(\mathbb{Z}(i)[2i]\) for any \(i\in\mathbb{N}\) and closed under both extensions and direct summands.
\end{definition}

Recall from \cite[3.3.6.a]{BCDFO} that there is a functor 
\[
\gamma^*:\widetilde{\mathrm{DM}}(k,\mathbb{Z})\longrightarrow \mathrm{DM}(k,\mathbb{Z})
\]
mapping the MW-motive associated to a smooth scheme $X$ to its ordinary motive. 

\begin{proposition}\label{bockstein}
If \(\gamma^*F_{\bullet}\) is a direct sum of motives of the form \(\mathbb{Z}(q)[p]\) for $p,q\in\mathbb{N}$, we have
\[\H^{*,*}_{\mathrm{M}}(F_{\bullet},\mathbb{Z}/2)=\mathrm{Im}(\pi)+\mathrm{Im}(\tau)\]
and consequently \(E(\mathrm{MW})_1^{*,*}(F_{\bullet})=R^{*,*}(F_{\bullet})\). Moreover, for every \(i\geq 2\), we have
\[\begin{array}{cc}E(\mathrm{MW})_2^{*,*}(F_{\bullet})=E^{*,*}(F_{\bullet});&E(\mathrm{MW})_i^{*,*}(F_{\bullet})=E(\mathrm{W})_i^{*,*}(F_{\bullet}).\end{array}\]
\end{proposition}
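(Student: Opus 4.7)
I would dispatch the three assertions in sequence.

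For the identity $\H^{*,*}_{\mathrm{M}}(F_\bullet,\mathbb{Z}/2) = \mathrm{Im}(\pi) + \mathrm{Im}(\tau)$, the plan is to reduce to $F_\bullet = \mathrm{Spec}(k)$. Since $\gamma^* F_\bullet \simeq \bigoplus_i \mathbb{Z}(q_i)[p_i]$ in $\mathrm{DM}(k,\mathbb{Z})$ with $p_i, q_i \in \mathbb{N}$, both integral and mod-$2$ motivic cohomology of $F_\bullet$ decompose as direct sums of shifted cohomology of the base field, in a manner compatible with $\pi$ and multiplication by $\tau$. For $\mathrm{Spec}(k)$, Voevodsky's resolution of the Milnor conjecture identifies $\H^{p,q}_{\mathrm{M}}(k,\mathbb{Z}/2)$ with $\mathbf{K}^{\mathrm{M}}_p(k)/2$ via $\tau^{q-p}$-multiplication for $0\leq p\leq q$ (and it vanishes otherwise). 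Classes with $p<q$ are thus divisible by $\tau$, while classes in bidegree $(p,p)$ lift to $\H^{p,p}_{\mathrm{M}}(k,\mathbb{Z}) = \mathbf{K}^{\mathrm{M}}_p(k)$ through $\pi$, giving the identity.

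The equality $E(\mathrm{W})_1^{*,*}(F_\bullet) = R^{*,*}(F_\bullet)$ is then a direct consequence of Proposition \ref{inj}: its hypothesis that $\tau$ is a non-zero-divisor on $\H^{*,*}_{\mathrm{M}}(F_\bullet,\mathbb{Z}/2)$ follows from \cite[Theorem 6.17]{V1} applied to the Tate motive $\gamma^* F_\bullet$, and the left term of the short exact sequence given there, namely the quotient of $\H^{*+1,*+1}_{\mathrm{M}}(F_\bullet,\mathbb{Z}/2)$ by $\mathrm{Im}(Sq^2\circ\pi) + \mathrm{Im}(\pi) + \mathrm{Im}(\tau)$, is killed by the first identity.

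For the higher-page identifications $E(\mathrm{MW})_2 = E$ and $E(\mathrm{MW})_i = E(\mathrm{W})_i$ for $i \geq 2$, I would compare the MW- and W-Bockstein spectral sequences via Bachmann's Cartesian square
\[\xymatrix{\H\widetilde{\mathbb{Z}} \ar[r] \ar[d] & \H_\mu\mathbb{Z} \ar[d] \\ \H_{\mathrm{W}}\mathbb{Z} \ar[r] & \H_\mu\mathbb{Z}/2.}\]
Smashing with $C(\eta)$ yields a fiber sequence of $\eta$-mod quotients, inducing a comparison of the three associated Bockstein spectral sequences. Since $\eta$ acts as zero on $\H_\mu\mathbb{Z}$, one has $\H_\mu\mathbb{Z}/\eta \simeq \H_\mu\mathbb{Z} \oplus \Sigma^{2,1}\H_\mu\mathbb{Z}$, so the corresponding motivic Bockstein has vanishing differentials. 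Under the first identity, the $\H_\mu\mathbb{Z}$-contribution to the MW-Bockstein collapses and becomes irrelevant from the second page onwards, producing the isomorphism $E(\mathrm{MW})_i \cong E(\mathrm{W})_i$ for all $i \geq 2$; combined with Proposition \ref{w} (which identifies $E(\mathrm{W})_2 = E$), this gives $E(\mathrm{MW})_2 = E$. The main obstacle is this third step: rigorously tracking the motivic defect through Bachmann's square and verifying that the comparison map of Bockstein spectral sequences becomes an isomorphism from page two onwards.
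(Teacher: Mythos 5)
Your first two steps match the paper's proof: reduce the identity $\H^{*,*}_{\mathrm{M}}(F_\bullet,\mathbb{Z}/2)=\mathrm{Im}(\pi)+\mathrm{Im}(\tau)$ to the case $\gamma^*F_\bullet=\mathbb{Z}$, invoke Voevodsky's $\H^{*,*}_{\mathrm{M}}(k,\mathbb{Z}/2)/\tau=\mathbf{K}^{\mathrm{M}}_*(k)/2$, and then kill the left term of the short exact sequence from Proposition~\ref{inj}. One caveat: Proposition~\ref{inj} identifies $R^{*,*}(F_\bullet)$ with the first page of the \emph{MW}-Bockstein spectral sequence, not the W-one, so the conclusion you derive is $E(\mathrm{MW})_1^{*,*}=R^{*,*}$. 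The paper's own statement carries the same $\mathrm{W}/\mathrm{MW}$ slip, as one sees already over the point, where $E(\mathrm{W})_1^{p,p}(k)=\mathbf{K}^{\mathrm{M}}_p(k)/2$ while $R^{p,p}(k)=\mathbf{K}^{\mathrm{M}}_p(k)$.

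Your third step, comparing the MW- and W-Bockstein spectral sequences through Bachmann's pullback square, is the intended route (the paper is extremely laconic here, saying only ``apply Proposition~\ref{w}''), but one sentence of your argument must be corrected. The claim that ``the corresponding motivic Bockstein has vanishing differentials'' is false: under the splitting $\H_{\mu}\mathbb{Z}/\eta\simeq\H_{\mu}\mathbb{Z}\oplus\H_{\mu}\mathbb{Z}(1)[2]$, the first-page differential of the $\eta$-Bockstein of $\H_{\mu}\mathbb{Z}$ sends $(a,b)$ to $(b,0)$, which is manifestly nonzero. The fact you actually need, and which is true, is that the \emph{second} page vanishes, since $\ker d_1=\mathrm{Im}\,d_1$. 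With this in hand the gap you flag closes: Bachmann's square furnishes an $\eta$-equivariant cofibre sequence $\H_{\mu}\mathbb{Z}\to\H\widetilde{\mathbb{Z}}\to\H_{\mathrm{W}}\mathbb{Z}$, so smashing with $C(\eta)$ and applying $[F_\bullet,-(q)[p]]$ gives a long exact sequence on first pages compatible with the first-page differentials, hence a long exact sequence of cochain complexes. Taking cohomology and using the vanishing of the second page of the $\H_{\mu}\mathbb{Z}$-Bockstein yields the isomorphism $E(\mathrm{MW})_2\to E(\mathrm{W})_2$, and Proposition~\ref{w} then identifies the target with $E$. Since this comparison comes from a morphism of exact couples it commutes with all higher differentials, so the second-page isomorphism propagates to $E(\mathrm{MW})_i\cong E(\mathrm{W})_i$ for every $i\geq 2$.
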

\begin{proof}
For the first statement, it suffices to check when \(\gamma^*F_{\bullet}=\mathbb{Z}\), which follows from \(\H^{*,*}_{\mathrm{M}}(k,\mathbb{Z}/2)/\tau=\mathbf{K}_*^{\mathrm{M}}(k)/2\) by \cite[Theorem 6.17]{V1}. The diffenrential map \(R^{p,q}(F_{\bullet})\to R^{p+2,q+1}(F_{\bullet})\) is given by \((x,y)\mapsto(y,0)\) regarding \(R^{p,q}\) as a subgroup of \(\H^{p,q}_{\mathrm{M}}\oplus\H^{p+2,q+1}_{\mathrm{M}}\). Hence it is easy to get
\[E(\mathrm{MW})_2^{*,*}=\frac{Ker(Sq^2\circ\pi)^{*,*}(\textrm{mod }\tau)}{\pi^{-1}(Im(Sq^2)^{*,*}(\textrm{mod }\tau))},\]
where the latter group is isomorphic to \(E^{*,*}(F_{\bullet})\) by the first statement. Hence we obtain the equality
\[E(\mathrm{MW})_2^{*,*}(F_{\bullet})=E^{*,*}(F_{\bullet}).\]
The statement for \(i\geq3\) follows immediately.
\end{proof}

We note, using the functor $\gamma^*$ and the fact that any Tate motive in \(\mathrm{DM}(k,\mathbb{Z})\) is a direct sum of \(\mathbb{Z}(i)[2i]\), that the condition is satisfied by any Tate MW-motive. This leads to the following definition.

\begin{definition}
For any Tate MW-motive \(A\), we set for $i\geq 2$
\[E_i^{*,*}(A):=E(\mathrm{MW})_i^{*,*}(A)=E(\mathrm{W})_i^{*,*}(A).\]
\end{definition}

\section{MW-Motives of Cellular Varieties over Euclidean Fields}
In this section, we suppose that the base field \(k\) is Euclidean, which is equivalent to \(\mathbf{W}(k)=\mathbb{Z}\). We will also use the localization functor
\[
L:\widetilde{\mathrm{DM}}(k,\mathbb{Z})\longrightarrow\widetilde{\mathrm{DM}}(k,\mathbb{Z})[\eta^{-1}]
\]
which can be seen as the left adjoint of the inclusion $R:\widetilde{\mathrm{DM}}(k,\mathbb{Z})[\eta^{-1}]\to \widetilde{\mathrm{DM}}(k,\mathbb{Z})$ of the full subcategory of $\eta$-local objects. We then have an adjunction
\[
L:\widetilde{\mathrm{DM}}(k,\mathbb{Z})\leftrightarrows \widetilde{\mathrm{DM}}(k,\mathbb{Z})[\eta^{-1}]:R
\]
and we will write $\mathbb{Z}_{\eta}$ for the image of $\mathbb{Z}$ under this functor. We note from \cite[Chapter 6, 4.1.3]{BCDFO} that $\mathbb{Z}_{\eta}$ represents Witt cohomology. 
\begin{definition}
For every \(0\neq p\in\mathbb{N}\), we set
\[
\mathbf{p}=\sum_{i=0}^{p-1}\epsilon^i\in\mathbf{GW}(k)=\mathbf{K}_0^{\mathrm{MW}}(k).
\]
\end{definition}

Since \(\epsilon\eta=\eta\), we have \(\mathbf{p}\eta=p\eta\in\mathbf{K}_{-1}^{\mathrm{MW}}(k)\).

\begin{proposition}\label{wittpart}
Suppose that \(p,q\in\mathbb{Z}\) are odd and that \(l=2^tp\) for some $t\in\mathbb{N}$. The following assertions hold true:
\begin{enumerate}
\item The MW-motive \(\mathbb{Z}/\mathbf{p}\) is the only MW-motive \(A\), up to isomorphism, with \(A/\eta=0\) and \(L(A)=\mathbb{Z}_{\eta}/p\).
\item For any mixed Tate MW-motive \(A\), \(L(A)\) is a direct sum of \(\mathbb{Z}_{\eta}/l[i]\).
\item We have
\[\mathbb{Z}/l\eta=\mathbb{Z}/2^t\eta\oplus\mathbb{Z}/\mathbf{p}.\]
\item If \((p,q)=1\), then
\[
\mathbb{Z}/\mathbf{pq}=\mathbb{Z}/\mathbf{p}\oplus\mathbb{Z}/\mathbf{q}\text{ and }\mathbb{Z}/\mathbf{p}\otimes\mathbb{Z}/\mathbf{q}=0.
\]
\item If \(p\) is an odd prime and \(e_1\leq e_2\) are (positive) integers, we have
\begin{eqnarray*}
\mathbb{Z}/\mathbf{p}^{e_1}\otimes\mathbb{Z}/\mathbf{p}^{e_2} & = &\mathbb{Z}/\mathbf{p}^{e_1}[1]\oplus\mathbb{Z}/\mathbf{p}^{e_1}; \\
\mathbb{Z}/2^{e_1}\eta\otimes\mathbb{Z}/2^{e_2}\eta & = &\mathbb{Z}/2^{e_1}\eta(1)[2]\oplus\mathbb{Z}/2^{e_1}\eta.
\end{eqnarray*}
\end{enumerate}
\end{proposition}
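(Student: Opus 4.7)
The unifying strategy is to systematically exploit the adjunction $L\dashv R$ between $\widetilde{\mathrm{DM}}(k,\mathbb{Z})$ and its $\eta$-inversion. The key principle is that $A/\eta=0$ is equivalent to $\eta\colon A(1)[1]\to A$ being an isomorphism (tensor the defining triangle of $\mathbb{Z}/\eta$ with $A$), hence to $A$ being $\eta$-local, whence the unit $A\to RL(A)$ is an isomorphism and $A$ is determined by $L(A)$. Over a Euclidean field we have $\mathrm{End}(\mathbb{Z}_{\eta})=\mathbf{W}(k)=\mathbb{Z}$, so the triangulated subcategory of the $\eta$-inverted category generated by $\mathbb{Z}_{\eta}$ is (equivalent to) $D_{\mathrm{perf}}(\mathbb{Z})$.

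For (1), I first check that $\mathbb{Z}/\mathbf{p}$ satisfies both conditions. Applying $L$ to the defining triangle yields $L(\mathbb{Z}/\mathbf{p})=\mathbb{Z}_{\eta}/p$, since $\mathbf{p}$ maps to $p$ under $\mathbf{GW}(k)\to\mathbf{W}(k)=\mathbb{Z}$. To see $(\mathbb{Z}/\mathbf{p})/\eta=0$, use $\epsilon=-1-\eta\cdot[-1]$: the product $\eta\cdot[-1]$ factors as $\mathbb{Z}\xrightarrow{[-1]}\mathbb{Z}(1)[1]\xrightarrow{\eta}\mathbb{Z}$, so post-composing with the quotient $\mathbb{Z}\to\mathbb{Z}/\eta$ kills it; hence $\epsilon=-1$ on $\mathbb{Z}/\eta$ and $\mathbf{p}=\sum\epsilon^{i}$ acts as $+1$ on $\mathbb{Z}/\eta$ for $p$ odd, so the cone vanishes. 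Uniqueness then follows from the general principle above. For (2), a mixed Tate MW-motive maps under $L$ into the subcategory generated by $\mathbb{Z}_{\eta}(i)[2i]\simeq\mathbb{Z}_{\eta}[i]$, which by the End-ring computation is $D_{\mathrm{perf}}(\mathbb{Z})$; every such object decomposes by the classification of finitely generated $\mathbb{Z}$-modules.

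For (3), factor $l\eta$ as $\mathbb{Z}(1)[1]\xrightarrow{2^{t}\eta}\mathbb{Z}\xrightarrow{\mathbf{p}}\mathbb{Z}$ and apply the octahedral axiom to obtain a triangle $\mathbb{Z}/2^{t}\eta\to\mathbb{Z}/l\eta\to\mathbb{Z}/\mathbf{p}\xrightarrow{\delta}\mathbb{Z}/2^{t}\eta[1]$. I show $\delta=0$ by computing $[\mathbb{Z}/\mathbf{p},\mathbb{Z}/2^{t}\eta[1]]=0$: via the long exact sequence induced by $\mathbb{Z}\xrightarrow{\mathbf{p}}\mathbb{Z}\to\mathbb{Z}/\mathbf{p}$ this amounts to $\mathbf{p}$ acting as an isomorphism on $\mathbb{Z}/2^{t}\eta$. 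For the latter I invoke the auxiliary triangle $\mathbb{Z}/\eta\to\mathbb{Z}/2^{t}\eta\to\mathbb{Z}/2^{t}$ (octahedron on $\mathbb{Z}(1)[1]\xrightarrow{\eta}\mathbb{Z}\xrightarrow{2^{t}}\mathbb{Z}$) and the five lemma: $\mathbf{p}$ is invertible on $\mathbb{Z}/\eta$ by the previous paragraph, and on $\mathbb{Z}/2^{t}$ because $\mathbb{Z}/\mathbf{p}\otimes\mathbb{Z}/2^{t}$ is $\eta$-local with $L$-image $\mathbb{Z}_{\eta}/p\otimes^{L}\mathbb{Z}_{\eta}/2^{t}=0$, hence itself zero. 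Part (4) is then immediate from the uniqueness in (1): both $\mathbb{Z}/\mathbf{pq}$ and $\mathbb{Z}/\mathbf{p}\oplus\mathbb{Z}/\mathbf{q}$ are $\eta$-local with common $L$-image $\mathbb{Z}_{\eta}/pq\simeq\mathbb{Z}_{\eta}/p\oplus\mathbb{Z}_{\eta}/q$ (CRT in $D(\mathbb{Z})$), and likewise $\mathbb{Z}/\mathbf{p}\otimes\mathbb{Z}/\mathbf{q}$ is $\eta$-local with $L$-image zero.

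The first half of (5) follows by the same $L$-descent: $\mathbb{Z}/\mathbf{p}^{e_{1}}\otimes\mathbb{Z}/\mathbf{p}^{e_{2}}$ is $\eta$-local, and its $L$-image $\mathbb{Z}_{\eta}/p^{e_{1}}\otimes^{L}\mathbb{Z}_{\eta}/p^{e_{2}}\simeq\mathbb{Z}_{\eta}/p^{e_{1}}\oplus\mathbb{Z}_{\eta}/p^{e_{1}}[1]$ is the standard $\mathrm{Tor}$ computation over $\mathbb{Z}$. The second half of (5), involving $2$-primary $\eta$-torsion, is the main obstacle: since $\mathbb{Z}/2^{e}\eta$ is not $\eta$-local, the $L$-descent no longer applies. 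The plan is to tensor the defining triangle of $\mathbb{Z}/2^{e_{2}}\eta$ with $\mathbb{Z}/2^{e_{1}}\eta$, reducing the claim to showing that the self-map $2^{e_{2}}\eta$ is null on $\mathbb{Z}/2^{e_{1}}\eta$ for $e_{1}\leq e_{2}$; writing $2^{e_{2}}\eta=2^{e_{2}-e_{1}}\cdot 2^{e_{1}}\eta$ further reduces to showing $2^{e_{1}}\eta$ itself is null on $\mathbb{Z}/2^{e_{1}}\eta$. I expect to extract this by a diagram chase on the auxiliary triangle $\mathbb{Z}/\eta\to\mathbb{Z}/2^{e_{1}}\eta\to\mathbb{Z}/2^{e_{1}}$, combined with the vanishing of $\eta$ as a self-map of $\mathbb{Z}/\eta$ and of $2^{e_{1}}$ as a self-map of $\mathbb{Z}/2^{e_{1}}$ (both standard in a stable $\infty$-categorical enhancement of $\widetilde{\mathrm{DM}}$); this is the step I anticipate requiring the most care.
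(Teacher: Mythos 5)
Your parts (1)--(4) and the first half of (5) are essentially the paper's argument, with some repackaging. In (1) the paper pins down $\mathbb{Z}/\mathbf{p}$ by $\gamma^*(\mathbf{p})=1$, $L(\mathbf{p})=p$, then notes $\eta$-locality gives $[A,\mathbb{Z}/\mathbf{p}]=[L(A),\mathbb{Z}_\eta/p]$ and appeals to Yoneda; your route through "$A/\eta=0\Leftrightarrow A$ is $\eta$-local (inverting $\eta$ is smashing) $\Rightarrow A\simeq RL(A)$" is the same principle. Your computation that $\epsilon$ acts as $-1$ on $\mathbb{Z}/\eta$, hence $\mathbf{p}$ acts as $1$ when $p$ is odd, is a valid and slightly more concrete way to see $\mathbb{Z}/\mathbf{p}\otimes\mathbb{Z}/\eta=0$. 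In (2) the End-ring observation $\mathrm{End}^*(\mathbb{Z}_\eta)=\mathbf{W}(k)=\mathbb{Z}$ and the conclusion via $D_{\mathrm{perf}}(\mathbb{Z})$ matches the paper's argument in spirit. In (3) you factor $l\eta$ in the opposite order, getting $\mathbb{Z}/2^t\eta\to\mathbb{Z}/l\eta\to\mathbb{Z}/\mathbf{p}$ and proving the connecting map vanishes via the five lemma on the auxiliary triangle $\mathbb{Z}/\eta\to\mathbb{Z}/2^t\eta\to\mathbb{Z}/2^t$; the paper factors as $2^t\eta\circ\mathbf{p}$ to get $\mathbb{Z}/\mathbf{p}(1)[1]\to\mathbb{Z}/l\eta\to\mathbb{Z}/2^t\eta$ and kills the connecting map in one stroke by the adjunction formula of (1), since $[\mathbb{Z}/2^t\eta,\mathbb{Z}/\mathbf{p}(1)[2]]$ becomes an $\mathrm{Ext}^2$ over $\mathbb{Z}$. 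Both are correct; the paper's ordering is cleaner because it places the $\eta$-local term on the target side. Part (4) and the first clause of (5) are the same $L$-descent/CRT/Tor computation in both.

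For the second clause of (5) your sketch has a genuine gap, which you yourself flag. You correctly reduce to showing $2^{e_1}\eta=0$ as a self-map of $\mathbb{Z}/2^{e_1}\eta$. However, the proposed diagram chase on $\mathbb{Z}/\eta\xrightarrow{a}\mathbb{Z}/2^{e_1}\eta\xrightarrow{b}\mathbb{Z}/2^{e_1}$, using $\eta=0$ on $\mathbb{Z}/\eta$ and $2^{e_1}=0$ on $\mathbb{Z}/2^{e_1}$, only yields $b\circ f=0$ and $f\circ a(1)[1]=0$ for $f=2^{e_1}\eta$. In a triangulated category this means $f$ factors both through $a$ and through $b(1)[1]$, i.e. $f$ is carried by the "strictly off-diagonal" part of the two-step filtration, and such a class need not vanish; to conclude you would still have to compute, e.g., that the relevant group of maps $\mathbb{Z}/2^{e_1}(1)[1]\to\mathbb{Z}/\eta$ contributing to it dies, which is work of comparable depth to what you are trying to avoid. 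The paper instead computes directly, via a chain of long exact sequences starting from $[\mathbb{Z}(i)[j],\mathbb{Z}]_{\widetilde{\mathrm{DM}}}=\mathrm{H}^{i-j}(k,\mathbf{W})$, that $[\mathbb{Z}/2^{e_1}\eta,\mathbb{Z}[1]]=\mathbb{Z}/2^{e_1}$ and then $[\mathbb{Z}/2^{e_1}\eta(1)[1],\mathbb{Z}/2^{e_1}\eta]\cong\mathbb{Z}/2^{e_1}$, from which $2^{e_2}\eta=0$ for $e_2\geq e_1$ is immediate. If you want to salvage your route, the honest fix is to carry out that hom-group computation; the auxiliary-triangle chase alone does not close the argument.
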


\begin{proof}
We argue as follows:
\begin{enumerate}
\item We may see $\mathbf{p}$ as an endomorphism of $\mathbb{Z}$ in $\widetilde{\mathrm{DM}}(k,\mathbb{Z})$, and correspondingly $\gamma^*(\mathbf{p})$ (resp. $L(\mathbf{p})$) as an endomorphism of $\mathbb{Z}$ in $\mathrm{DM}(k,\mathbb{Z})$ (resp. $\widetilde{\mathrm{DM}}(k,\mathbb{Z})[\eta^{-1}]$). We then have \(\gamma^*(\mathbf{p})=1\) and  \(L(\mathbf{p})=p\); these two properties determining uniquely \(\mathbf{p}\). Thus
\[
\mathbb{Z}/\mathbf{p}\otimes\mathbb{Z}/\eta=0
\]
and then \(\mathbb{Z}/\mathbf{p}\) has the properties described. It follows also that $\mathbb{Z}/\mathbf{p}$ is $\eta$-local and consequently 
\[
[A,\mathbb{Z}/\mathbf{p}]_{\widetilde{\mathrm{DM}}(k,\mathbb{Z})}=[L(A),\mathbb{Z}_{\eta}/p]_{\widetilde{\mathrm{DM}}(k,\mathbb{Z})[\eta]^{-1}}
\]
for any \(A\in\widetilde{\mathrm{DM}}(k,\mathbb{Z})\).
If we suppose that some object \(B\) satisfies the above condition, we have
\[
[A,B]_{\widetilde{\mathrm{DM}}(k,\mathbb{Z})}=[L(A),L(B)]_{\widetilde{\mathrm{DM}}(k,\mathbb{Z})[\eta^{-1}]}=[L(A),\mathbb{Z}_{\eta}/p]_{\widetilde{\mathrm{DM}}(k,\mathbb{Z})[\eta^{-1}]}
\]
and therefore \(B=\mathbb{Z}/\mathbf{p}\) by Yoneda.
\item First, we observe that \([\mathbb{Z}_{\eta}[i],L(A)]_{\widetilde{\mathrm{DM}}(k,\mathbb{Z})[\eta^{-1}]}\) is a finitely generated abelian group since \(A\) comes from finite steps of extensions of \(\mathbb{Z}[i]\) and $L$ is exact. So there is a map
\[\varphi:L(A)\longrightarrow\oplus_{l,i}\mathbb{Z}_{\eta}/l[i]\]
such that \([\mathbb{Z}_{\eta}[i],\varphi]_{\widetilde{\mathrm{DM}}(k,\mathbb{Z})[\eta^{-1}]}\) is an isomorphism for every \(i\). So \(\varphi\) is an isomorphism since \(A\) is mixed Tate. Here, we note that
\[
[\mathbb{Z}_{\eta}[i],\mathbb{Z}_{\eta}/l]_{\widetilde{\mathrm{DM}}(k,\mathbb{Z})[\eta^{-1}]}=\begin{cases}\mathbb{Z}/l&i=0,\\0&i\neq 0,\end{cases}
\]
since $\mathbb{Z}_{\eta}$ represents Witt cohomology.
\item Since \(\mathbf{p}\eta=p\eta\), we may use the composite $\mathbb{Z}(1)[1]\xrightarrow{\mathbf{p}}\mathbb{Z}(1)[1]\xrightarrow{2^t\eta}\mathbb{Z}$ and the octahedron axiom to get a distinguished triangle
\[
\mathbb{Z}/\mathbf{p}(1)[1]\longrightarrow\mathbb{Z}/l\eta\longrightarrow\mathbb{Z}/2^t\eta\longrightarrow\mathbb{Z}/\mathbf{p}(1)[2]
\]
and the statement follows from (1), which shows that $[\mathbb{Z}/2^t\eta,\mathbb{Z}/\mathbf{p}(1)[2]]_{\widetilde{\mathrm{DM}}(k,\mathbb{Z})}=0$ and that $\mathbb{Z}/\mathbf{p}(1)[1]\simeq \mathbb{Z}/\mathbf{p}$.
\item This follows directly from (1).
\item We only prove the last statement, the proof of the first one being straightforward. The distinguished triangle
\[
\mathbb{Z}(2)[2]\xrightarrow{2^{e_1}\eta} \mathbb{Z}(1)[1]\to \mathbb{Z}/2^{e_1}\eta(1)[1]\to \mathbb{Z}(2)[3]
\]
yields an exact sequence
\[
[\mathbb{Z}(2)[3],\mathbb{Z}]_{\widetilde{\mathrm{DM}}}\longrightarrow[\mathbb{Z}/2^{e_1}\eta(1)[1],\mathbb{Z}]_{\widetilde{\mathrm{DM}}}\longrightarrow[\mathbb{Z}(1)[1],\mathbb{Z}]_{\widetilde{\mathrm{DM}}}\xrightarrow{2^{e_1}\eta}[\mathbb{Z}(2)[2],\mathbb{Z}]_{\widetilde{\mathrm{DM}}}
\]
and we have 
\[
[\mathbb{Z}(i)[j],\mathbb{Z}]_{\widetilde{\mathrm{DM}}(k,\mathbb{Z})}=[\mathbb{Z},\mathbb{Z}(-i)[-j]]_{\widetilde{\mathrm{DM}}(k,\mathbb{Z})}=\mathrm{H}^{i-j}(k,\mathbf{W})
\]
for any $i,j>0$. It follows that $[\mathbb{Z}(2)[3],\mathbb{Z}]_{\widetilde{\mathrm{DM}}(k,\mathbb{Z})}=0$ and that the morphism
\[
\mathbf{W}(k)=[\mathbb{Z}(1)[1],\mathbb{Z}]_{\widetilde{\mathrm{DM}}(k,\mathbb{Z})}\xrightarrow{2^{e_1}\eta}[\mathbb{Z}(2)[2],\mathbb{Z}]_{\widetilde{\mathrm{DM}}(k,\mathbb{Z})}=\mathbf{W}(k)
\]
is injective. Therefore, $[\mathbb{Z}/2^{e_1}\eta(1)[1],\mathbb{Z}]_{\widetilde{\mathrm{DM}}(k,\mathbb{Z})}=0$. Next, the triangle
\[
\mathbb{Z}\to \mathbb{Z}/2^{e_1}\eta\xrightarrow{d} \mathbb{Z}(1)[2]\xrightarrow{2^{e_1}\eta}\mathbb{Z}[1]
\]
yields this time an exact sequence
\[
[\mathbb{Z}[1],\mathbb{Z}[1]]_{\widetilde{\mathrm{DM}}}\xrightarrow{2^{e_1}\eta}[\mathbb{Z}(1)[2],\mathbb{Z}[1]]_{\widetilde{\mathrm{DM}}}\longrightarrow[\mathbb{Z}/2^{e_1}\eta,\mathbb{Z}[1]]_{\widetilde{\mathrm{DM}}}\longrightarrow[\mathbb{Z},\mathbb{Z}[1]]_{\widetilde{\mathrm{DM}}}
\]
from which we draw that $[\mathbb{Z}/2^{e_1}\eta,\mathbb{Z}[1]]_{\widetilde{\mathrm{DM}}(k,\mathbb{Z})}=\mathbb{Z}/2^{e_1}$ using that $[\mathbb{Z},\mathbb{Z}[1]]_{\widetilde{\mathrm{DM}}(k,\mathbb{Z})}=0$, while $[\mathbb{Z}[1],\mathbb{Z}[1]]_{\widetilde{\mathrm{DM}}(k,\mathbb{Z})}=\mathbf{K}_0^{\mathrm{MW}}(k)$ and $[\mathbb{Z}(1)[2],\mathbb{Z}[1]]_{\widetilde{\mathrm{DM}}(k,\mathbb{Z})}=\mathbf{W}(k)=\mathbb{Z}$, generated by $\eta$. Applying $[\mathbb{Z}/2^{e_1}\eta(1)[1],-]$ to the exact triangle above, we obtain 
\[
[\mathbb{Z}/2^{e_1}\eta(1)[1],\mathbb{Z}/2^{e_1}\eta]_{\widetilde{\mathrm{DM}}(k,\mathbb{Z})}=\mathbb{Z}/2^{e_1}
\]
and it follows that the first morphism in the distinguished triangle
\[
\mathbb{Z}/2^{e_1}\eta(1)[1]\xrightarrow{2^{e_2}\eta}\mathbb{Z}/2^{e_1}\eta\longrightarrow\mathbb{Z}/2^{e_1}\eta\otimes\mathbb{Z}/2^{e_2}\eta\longrightarrow \mathbb{Z}/2^{e_1}\eta(1)[2]
\]
is trivial, providing a splitting.
\end{enumerate}
\end{proof}

\begin{proposition}\label{leta}
Suppose that \(l=2^ts\) with \(s\) odd and that \(p,q\in\mathbb{Z}\). We have
\[[\mathbb{Z}/l\eta,\mathbb{Z}(q)[p]]_{\widetilde{\mathrm{DM}}(k,\mathbb{Z})}=\begin{cases}\mathrm{H}_{\mathrm{M}}^{p,q}(k,\mathbb{Z})\oplus \mathrm{H}^{p-2,q-1}_{\mathrm{M}}(k,\mathbb{Z})& p\neq q,q+1,\\2\mathbf{K}_q^{\mathrm{M}}(k)\oplus \mathrm{H}^{p-2,q-1}_{\mathrm{M}}(k,\mathbb{Z})&p=q,\\\mathbf{I}^q(k)/s\mathbf{I}^q(k)\oplus\mathbf{I}^{q-1}(k)/2^t\mathbf{I}^q(k)\oplus2\mathbf{K}_{q-1}^{\mathrm{M}}(k)&p=q+1,\\\end{cases}\]
and
\[[\mathbb{Z}/l\eta,\mathbb{Z}(q)[p]]_{\mathrm{WDM}(k,\mathbb{Z})}=\begin{cases} \mathrm{H}_{\mathrm{M}}^{p,q}(k,\mathbb{Z}/2)\oplus  \mathrm{H}^{p-2,q-1}_{\mathrm{M}}(k,\mathbb{Z}/2)&p\neq q,q+1.\\ \mathrm{H}^{p-2,q-1}_{\mathrm{M}}(k,\mathbb{Z}/2)&p=q.\\\mathbf{I}^q(k)/s\mathbf{I}^q(k)\oplus\mathbf{I}^{q-1}(k)/2^t\mathbf{I}^q(k)&p=q+1.\\\end{cases}\]
\end{proposition}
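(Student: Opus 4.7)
The plan is to apply Proposition~\ref{wittpart}(3) to decompose $\mathbb{Z}/l\eta\cong\mathbb{Z}/2^t\eta\oplus\mathbb{Z}/\mathbf{s}$ and compute each summand's contribution separately by applying $\mathrm{Hom}(-,\mathbb{Z}(q)[p])$ to its defining triangle. For the $\mathbb{Z}/\mathbf{s}$ summand, which is $\eta$-local by Proposition~\ref{wittpart}(1), I would pass to $\widetilde{\mathrm{DM}}(k,\mathbb{Z})[\eta^{-1}]$ where it becomes $\mathbb{Z}_{\eta}/s$, and apply $\mathrm{Hom}(-,\mathbb{Z}_{\eta}(q)[p])$ to the Bockstein triangle $\mathbb{Z}_{\eta}\xrightarrow{s}\mathbb{Z}_{\eta}\to\mathbb{Z}_{\eta}/s$ to obtain a short exact sequence controlled by multiplication by $s$ on Witt motivic cohomology of the base field. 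For the $\mathbb{Z}/2^t\eta$ summand, the defining triangle produces the four-term exact sequence
\[
\mathrm{H}^{p-1,q}_{\mathrm{MW}}(k)\xrightarrow{\cdot 2^t\eta}\mathrm{H}^{p-2,q-1}_{\mathrm{MW}}(k)\to[\mathbb{Z}/2^t\eta,\mathbb{Z}(q)[p]]\to\mathrm{H}^{p,q}_{\mathrm{MW}}(k)\xrightarrow{\cdot 2^t\eta}\mathrm{H}^{p-1,q-1}_{\mathrm{MW}}(k),
\]
whose entries I would evaluate using the Cartesian square of Proposition~\ref{bockstein compare}. On a point, $\mathrm{H}^{p,q}_{\mathrm{MW}}(k)$ equals the fiber product $\mathbf{K}^{\mathrm{M}}_q(k)\times_{\mathbf{K}^{\mathrm{M}}_q(k)/2}\mathbf{I}^q(k)=\mathbf{K}^{\mathrm{MW}}_q(k)$ on the diagonal $p=q$ and vanishes off it, while $\eta$ kills the Milnor component and acts as the inclusion $\mathbf{I}^q\hookrightarrow\mathbf{I}^{q-1}$ on the Witt component. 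The torsion-freeness of $\mathbf{I}^q(k)$ over a Euclidean field then makes the kernel of $\cdot 2^t\eta$ on $\mathbf{K}^{\mathrm{MW}}_q$ equal to $2\mathbf{K}^{\mathrm{M}}_q(k)$.

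A case split on $(p,q)$ now handles the generic regime $p\neq q,q+1$ (where all four terms in the exact sequence vanish, so the answer reduces to the motivic contributions $\mathrm{H}^{p,q}_{\mathrm{M}}(k,\mathbb{Z})\oplus\mathrm{H}^{p-2,q-1}_{\mathrm{M}}(k,\mathbb{Z})$) and the case $p=q$ (where the Hom is simply $\ker(\cdot 2^t\eta)=2\mathbf{K}^{\mathrm{M}}_q(k)$). The hardest step is the case $p=q+1$: here the Hom equals the cokernel of $\cdot 2^t\eta:\mathbf{K}^{\mathrm{MW}}_q(k)\to\mathbf{K}^{\mathrm{MW}}_{q-1}(k)$ with image $2^t\mathbf{I}^q\subseteq\mathbf{I}^{q-1}\subseteq\mathbf{K}^{\mathrm{MW}}_{q-1}$, and I must identify the quotient explicitly with $\mathbf{I}^{q-1}(k)/2^t\mathbf{I}^q(k)\oplus 2\mathbf{K}^{\mathrm{M}}_{q-1}(k)$. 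My plan is to construct an explicit isomorphism $(a,b)\mapsto(\overline{b},\,2a)$ from $\mathbf{K}^{\mathrm{MW}}_{q-1}(k)$ to this direct sum, using Voevodsky's resolution of Milnor's conjecture $\mathbf{I}^{q-1}/\mathbf{I}^q\cong\mathbf{K}^{\mathrm{M}}_{q-1}/2$ to match the fiber-product compatibility condition with the direct-sum structure. Combined with the contribution $\mathbf{I}^q(k)/s\mathbf{I}^q(k)$ from the $\mathbb{Z}/\mathbf{s}$ summand, this completes the MW-motive formula.

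The WDM version follows by running the same argument inside the $\eta$-inverted category: the Milnor summand of MW cohomology vanishes there, which eliminates the $2\mathbf{K}^{\mathrm{M}}_{q-1}(k)$ piece in the $p=q+1$ case and replaces each motivic group in the generic case by its mod-$2$ analogue, as announced.
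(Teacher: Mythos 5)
Your proposal diverges from the paper's proof in structure: you decompose $\mathbb{Z}/l\eta\cong\mathbb{Z}/2^t\eta\oplus\mathbb{Z}/\mathbf{s}$ at the outset and treat each summand separately, whereas the paper applies the long exact sequence for the mapping cone $C(l\eta)$ directly for all $p\neq q+1$ and invokes the decomposition only in the hard case $p=q+1$ (to reduce to $s=1$ or $t=0$). Your idea is reasonable, but as written the argument has two serious gaps.

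First, you propose to evaluate $\mathrm{H}^{*,*}_{\mathrm{MW}}(k)$ using the Cartesian square of Proposition~\ref{bockstein compare}. This is circular: the paper proves that square by first establishing Proposition~\ref{bounded} (bounded $\eta$-torsion for Tate motives), whose part~(1) and part~(3) use Proposition~\ref{leta} as an input. If you want the pullback description of MW-motivic cohomology of a point, you must cite it as an independent external input (it is due to Hornbostel--Wendt~\cite{HW}, as the paper notes) rather than as a consequence of the Bockstein machinery you are trying to bootstrap.

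Second, and more fundamentally, your claim that $\mathrm{H}^{p,q}_{\mathrm{MW}}(k)$ ``vanishes off the diagonal'' is false. For a field, $\mathrm{H}^{p,q}_{\mathrm{MW}}(k)$ agrees with ordinary motivic cohomology $\mathrm{H}^{p,q}_{\mathrm{M}}(k,\mathbb{Z})$ in the range $p<q$ (this is the content of the reference \cite[Proposition~2.2]{Y2} the paper uses), and these groups are nonzero in general (over a Euclidean field like $\mathbb{R}$ they compute indecomposable $K$-theory, etc.). Your vanishing claim would make the four-term sequence for $\mathbb{Z}/2^t\eta$ give zero in the generic range $p\neq q,q+1$, which contradicts the statement's answer $\mathrm{H}^{p,q}_{\mathrm{M}}(k,\mathbb{Z})\oplus\mathrm{H}^{p-2,q-1}_{\mathrm{M}}(k,\mathbb{Z})$. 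The fiber-product description $\mathbf{K}^{\mathrm{M}}_q\times_{\mathbf{K}^{\mathrm{M}}_q/2}\mathbf{I}^q$ only describes the diagonal $\mathrm{H}^{q,q}_{\mathrm{MW}}(k)=\mathbf{K}^{\mathrm{MW}}_q(k)$; you are conflating bigraded MW-motivic cohomology with Nisnevich cohomology of the sheaf $\mathbf{K}^{\mathrm{MW}}_q$. Until you correctly feed in the computation of $\mathrm{H}^{p,q}_{\mathrm{MW}}(k)$ (and the $\eta$-action on it) for all $(p,q)$, your case split cannot reproduce the stated answer. Once those two issues are repaired --- replace the circular citation by \cite{HW} or \cite[Prop.~2.2]{Y2}, and correct the off-diagonal description --- the remaining splitting arguments (torsion-freeness of $\mathbf{I}^q(k)$, $2$-divisibility of $2\mathbf{K}^{\mathrm{M}}_{q-1}(k)$) you sketch for $p=q$ and $p=q+1$ are in line with what the paper does.
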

\begin{proof}
We do the first case,the second being much easier. By definition, we have a long exact sequence
\[\ldots\xrightarrow{l\eta}\mathrm{H}^{p-2,q-1}_{\mathrm{MW}}(k,\mathbb{Z})\longrightarrow [\mathbb{Z}/l\eta,\mathbb{Z}(q)[p]]_{\widetilde{\mathrm{DM}}(k,\mathbb{Z})}\longrightarrow \mathrm{H}^{p,q}_{\mathrm{MW}}(k,\mathbb{Z})\xrightarrow{l\eta}\ldots \]
from which the proof follows if \(p\neq q+1\) using the argument in the proof of \cite[Proposition 2.2]{Y2}.

Now suppose \(p=q+1\). By Proposition \ref{wittpart}, it suffices to consider the cases when \(s=1\) or \(t=0\). Since \(k\) is Euclidean, there is a decomposition
\[\mathbf{K}_q^{\mathrm{M}}(k)=2\mathbf{K}_q^{\mathrm{M}}(k)\oplus\mathbb{Z}/2\cdot[-1]^q\]
where \(2\mathbf{K}_q^{\mathrm{M}}(k)\) is \(2\)-divisible. There is a commutative diagram with exact rows
\[
	\xymatrix
	{
		\mathbf{K}_q^{\mathrm{MW}}(k)\ar[r]^{l\eta}\ar[d]	&\mathbf{K}_{q-1}^{\mathrm{MW}}(k)\ar[r]\ar[d]_-f	&[\mathbb{Z}/l\eta,\mathbb{Z}(q)[p]]_{\widetilde{\mathrm{DM}}(k,\mathbb{Z})}\ar[r]\ar@{-->}[d]_-{\overline f}	&0\\
		0\ar[r]														&\mathbf{K}_{q-1}^{\mathrm{M}}(k)\ar@{=}[r]		&\mathbf{K}_{q-1}^{\mathrm{M}}(k)\ar[r]										&0,
	}
\]
where $f$ is the projection. Since $f$ is onto, \(\overline f\) has the same property. By the Snake lemma we get an exact sequence
\[\mathbf{K}_q^{\mathrm{MW}}(k)\xrightarrow{l\eta}\mathbf{I}^q(k)\longrightarrow \ker(\overline f)\longrightarrow 0.\]
So \(\ker(\overline f)=\mathbf{I}^q(k)/l\mathbf{I}^q(k)\) and there is an exact sequence
\[0\longrightarrow\mathbf{I}^q(k)/l\mathbf{I}^q(k)\longrightarrow[\mathbb{Z}/l\eta,\mathbb{Z}(q)[p]]_{\widetilde{\mathrm{DM}}(k,\mathbb{Z})}\longrightarrow\mathbf{K}_{q-1}^{\mathrm{M}}(k)\longrightarrow 0.\]
Considering this time the epimorphism $\mathbf{K}_{q-1}^{\mathrm{MW}}\to\mathbf{I}^{q-1}$, we obtain a diagram with exact rows
\[
	\xymatrix
	{
		0\ar[r]	&l\mathbf{I}^q(k)\ar[r]\ar@{=}[d]	&\mathbf{K}_{q-1}^{\mathrm{MW}}(k)\ar[r]\ar[d]	&[\mathbb{Z}/l\eta,\mathbb{Z}(q)[p]]_{\widetilde{\mathrm{DM}}(k,\mathbb{Z})}\ar[r]\ar[d]_g	&0\\
		0\ar[r]	&l\mathbf{I}^q(k)\ar[r]					&\mathbf{I}^{q-1}(k)\ar[r]			&\mathbf{I}^{q-1}(k)/l\mathbf{I}^q(k)\ar[r]					&0
	}
\]
where \(g\) is surjective. The composite
\[2^q\mathbb{Z}/l2^q\mathbb{Z}=\mathbf{I}(k)^q/l\mathbf{I}(k)^q\longrightarrow[\mathbb{Z}/l\eta,\mathbb{Z}(q)[p]]_{\widetilde{\mathrm{DM}}(k,\mathbb{Z})}\xrightarrow{g}\mathbf{I}(k)^{q-1}/l\mathbf{I}(k)^q=2^{q-1}\mathbb{Z}/l2^q\mathbb{Z}\]
is split injective if \(l\) is odd. We have an isomorphism \(\ker(g)=2\mathbf{K}_{q-1}^M(k)\) and consequently we get an exact sequence
\[0\longrightarrow2\mathbf{K}_{q-1}^M(k)\longrightarrow[\mathbb{Z}/l\eta,\mathbb{Z}(q)[p]]_{\widetilde{\mathrm{DM}}(k,\mathbb{Z})}\longrightarrow\mathbf{I}(k)^{q-1}/l\mathbf{I}(k)^q\longrightarrow 0,\]
which splits since \(l=2^t\) and \(2\mathbf{K}_{q-1}^M(k)\) is \(2\)-divisible.
\end{proof}
%\begin{corollary}\label{ex}
%The following are equivalent:
%\begin{enumerate}[label=\arabic*)]
%\item The \(l\) is odd;
%\item The \(\mathbb{Z}/l\eta\) is very simple;
%\item The \(\mathbb{Z}/l\eta\) is simple.
%\end{enumerate}
%\end{corollary}
%\begin{proof}
%We show that \(3)\Longrightarrow1)\). If \(l\) is even, we may suppose \(l=2^t\). By Proposition \ref{wittpart}, the maps
%\[H^{3,2}_{\widetilde{\mathrm{DM}}(k,\mathbb{Z})}\xrightarrow{\eta}H^{2,1}_{\widetilde{\mathrm{DM}}(k,\mathbb{Z})}\xrightarrow{\eta}H^{1,0}_{\widetilde{\mathrm{DM}}(k,\mathbb{Z})}\]
%gives maps
%\[2\mathbb{Z}/2^{t+2}\longrightarrow\mathbb{Z}/2^{t+1}\longrightarrow\mathbb{Z}/2^t.\]
%So the \(u=2^t\in2\mathbb{Z}/2^{t+2}\) satisfies \(\eta^2u=0\) and \(\eta u\neq 0\). So \(\mathbb{Z}/l\eta\) is not simple.
%\end{proof}
\begin{proposition}\label{faithful}
Suppose that \(f:A\longrightarrow B\) is a map between Tate MW-motives having up to \(\eta^r\)-torsions. If \(\mathrm{CH}^*(f)\) and \(\mathrm{H}^*(f,\mathbf{W})\) are isomorphisms, then \(f\) is an isomorphism.
\end{proposition}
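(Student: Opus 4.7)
My plan is to show that the cone $C:=\mathrm{cone}(f)$ is zero in $\widetilde{\mathrm{DM}}(k,\mathbb{Z})$, by verifying that its MW-motivic cohomology vanishes identically and then invoking a Yoneda-type argument inside the mixed Tate subcategory.

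First I would upgrade the Chow-group hypothesis to a full isomorphism on ordinary motivic cohomology. Both $\gamma^*(A)$ and $\gamma^*(B)$ split as direct sums of $\mathbb{Z}(n)[2n]$ in $\mathrm{DM}(k,\mathbb{Z})$ by the well-known splitting of Tate motives recalled in the introduction. Since
\[
\mathrm{Hom}_{\mathrm{DM}(k,\mathbb{Z})}(\mathbb{Z}(n)[2n],\mathbb{Z}(m)[2m])=\begin{cases}\mathbb{Z}&n=m,\\ 0&n\neq m,\end{cases}
\]
the morphism $\gamma^*(f)$ is weight-block-diagonal with integer-matrix blocks, and the hypothesis that $\mathrm{CH}^*(f)$ is an isomorphism exactly asserts that each such block is invertible. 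Therefore $\gamma^*(f)$ is an isomorphism in $\mathrm{DM}(k,\mathbb{Z})$, which implies that $f$ induces isomorphisms on $\mathrm{H}^{*,*}_{\mathrm{M}}(-,\mathbb{Z})$ and, by reducing modulo $2$, on $\mathrm{H}^{*,*}_{\mathrm{M}}(-,\mathbb{Z}/2)$.

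Next I would combine this with the Witt-cohomology hypothesis via the Cartesian square of Proposition \ref{bockstein compare}: by naturality, the square for $A$ and for $B$ together with the iso's on the three non-MW corners force $f$ to induce an isomorphism on $\mathrm{H}^{*,*}_{\mathrm{MW}}(-,\mathbb{Z})=[-,\mathbb{Z}(q)[p]]_{\widetilde{\mathrm{DM}}(k,\mathbb{Z})}$ for every $p,q$. The long exact sequence associated to the distinguished triangle $A\to B\to C\to A[1]$ then yields $[C,\mathbb{Z}(q)[p]]_{\widetilde{\mathrm{DM}}(k,\mathbb{Z})}=0$ for all $p,q\in\mathbb{Z}$.

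Finally, the full subcategory $\mathcal{T}:=\{X\in\widetilde{\mathrm{DM}}(k,\mathbb{Z}):[C,X]=0\}$ is a thick triangulated subcategory containing every $\mathbb{Z}(q)[p]$, hence contains every mixed Tate MW-motive. Since $C$, being the cone of a map between Tate MW-motives, is itself mixed Tate, we obtain $C\in\mathcal{T}$, i.e. $[C,C]=0$; this forces $\mathrm{id}_C=0$ and $C\simeq 0$, proving that $f$ is an isomorphism. The most delicate point is the Cartesian-square step: one has to check that Proposition \ref{bockstein compare} applies to both $A$ and $B$ (which is fine since they are Tate and hence $\gamma^*$ of them is a direct sum of $\mathbb{Z}(q)[p]$'s) and that naturality correctly transports the iso on the Chow side all the way around the square. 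The other ingredients---the integer-matrix description of $\gamma^*(f)$, the long exact sequence, and the cone argument---are essentially formal once the iso on MW-cohomology has been secured.
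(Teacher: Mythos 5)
There is a genuine gap at the Cartesian-square step, and it is not the one you flagged. The hypothesis gives you that $\mathrm{H}^*(f,\mathbf{W})$ is an isomorphism, where $\mathrm{H}^*(-,\mathbf{W})$ denotes Nisnevich cohomology with coefficients in the Witt sheaf (equivalently, the $\eta$-inverted theory $\mathrm{H}^{*,*}_\eta$, see Proposition~\ref{bounded}(3)). The lower-left corner of the square in Proposition~\ref{bockstein compare}(1) is $\mathrm{H}^{*,*}_{\mathrm{W}}(-,\mathbb{Z})$, the cohomology represented by the spectrum $\mathrm{H}_{\mathrm{W}}\mathbb{Z}$ (the ``$\mathbf{I}^j$-motivic cohomology'' of the introduction). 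These are genuinely different theories: for instance, for $A=\mathbb{Z}(1)[2]$ over a Euclidean field one has $\mathrm{H}^{q+1,q}_{\mathrm{W}}(A,\mathbb{Z})\cong\mathbf{I}^{q-1}(k)=2^{q-1}\mathbb{Z}$, which is a proper subgroup of $\mathrm{H}^{1}(A,\mathbf{W})\cong\mathbf{W}(k)=\mathbb{Z}$ for $q\geq2$. So knowing $\mathrm{H}^*(f,\mathbf{W})$ is an isomorphism does not give you, without further argument, that $\mathrm{H}^{*,*}_{\mathrm{W}}(f,\mathbb{Z})$ is an isomorphism; you cannot plug into the Cartesian square as stated.

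To close this gap you would need to run the very analysis you are trying to avoid: use the $\eta$-Bockstein machinery (Proposition~\ref{bockstein}, Proposition~\ref{bounded}, Corollary~\ref{identification}) to deduce $\mathrm{H}^{*,*}_{\mathrm{W}}(f,\mathbb{Z})$ iso from the isomorphism on the $\eta$-local part $\mathrm{H}^*(-,\mathbf{W})$ together with isomorphisms on the Bockstein pages coming from $\mathrm{H}^{*,*}_{\mathrm{M}}(f,\mathbb{Z}/2)$. That is essentially what the paper's proof does, except it works directly with the Bockstein spectral sequence for $\mathrm{H}\widetilde{\mathbb{Z}}$ rather than passing through $\mathrm{H}_{\mathrm{W}}\mathbb{Z}$ and the Cartesian square, and then concludes by the filtration/identification argument. (There is also a logical-ordering worry: Proposition~\ref{bockstein compare} is proved after Proposition~\ref{faithful} and its proof rests on Proposition~\ref{bounded}, so citing it here risks circularity unless you verify that the chain of dependencies genuinely unwinds; the paper's direct route avoids this.) Your first step (recovering $\mathrm{H}^{*,*}_{\mathrm{M}}(f,\mathbb{Z})$ and $\mathrm{H}^{*,*}_{\mathrm{M}}(f,\mathbb{Z}/2)$ isos from $\mathrm{CH}^*(f)$ via the weight-block-diagonal structure of $\gamma^*(f)$) and your final cone/Yoneda argument inside the mixed Tate subcategory are both correct and match the paper's intent.
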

\begin{proof}
The hypothesis implies that \(\mathrm{H}^{*,*}_{\mathrm{M}}(f,\mathbb{Z})\), \(\mathrm{H}^{*,*}_{\mathrm{M}}(f,\mathbb{Z}/2)\) are isomorphisms since $\gamma^*(A)$ and $\gamma^*(B)$ are Tate. This implies that \(E^{*,*}_i(f)\) are isomorphisms for all \(i\geq 1\) by Proposition \ref{bockstein}. Since both of them have up to \(\eta^r\)-torsion, in the Bockstein spectral sequences of both \(A\) and \(B\), the corresponding quotients \(Z_{r+1}/B_{r+1}\) are isomorphic. Since \(Z_{i}/Z_{i+1}\) is the image of \(E_{i+1}\longrightarrow E_{i+1}\), the quotients \(Z_i/B_{r+1}\) for \(A\) and \(B\) are isomorphic for all \(i\). Moreover, we have \(D_r\cap \ker(\eta^{\infty})=0\). Applying \eqref{filtration} for \(0\leq n\leq r\) and the hypothesis on \(H^*(f,\textbf{W})\), we see that \(\mathrm{H}^{*,*}_{\mathrm{MW}}(f,\mathbb{Z})\) is an isomorphism. So \(f\) is an isomorphism by the property of Tate objects.
\end{proof}
\begin{lemma}\label{linear}
Suppose that \(f:\mathbb{Z}^{\oplus n}=V\longrightarrow W=\mathbb{Z}/2^{\oplus m}\) is a surjective homomorphism of $\mathbb{Z}$-modules. The following assertions are verified:
\begin{enumerate}
\item There is a decomposition \(V=V_1\oplus V_2\) such that $W=V_1/2V_1$ and $\ker(f)=2V_1\oplus V_2$.
\item If \(W=W_1\oplus W_2\), there is a decomposition \(V=V_1\oplus V_2\) with $W_1=V_1/2V_1$ and $f(V_2)=W_2$.
\end{enumerate}
\end{lemma}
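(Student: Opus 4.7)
The plan is to reduce both parts to elementary facts from the structure theory of finitely generated $\mathbb{Z}$-modules. For part (1), I would apply Smith normal form to the inclusion $\ker(f) \hookrightarrow V$: there exists a basis $v_1, \ldots, v_n$ of $V$ and integers $d_1 \mid d_2 \mid \cdots \mid d_n$ such that $d_1 v_1, \ldots, d_n v_n$ is a basis of $\ker(f)$. Since the quotient $V/\ker(f) \cong W \cong (\mathbb{Z}/2)^{\oplus m}$ is annihilated by $2$ and has order $2^m$, exactly $m$ of the invariant factors equal $2$ and the remaining $n-m$ equal $1$. After reindexing so that $d_1 = \cdots = d_{n-m} = 1$ and $d_{n-m+1} = \cdots = d_n = 2$, taking $V_1 := \mathbb{Z} v_{n-m+1} \oplus \cdots \oplus \mathbb{Z} v_n$ and $V_2 := \mathbb{Z} v_1 \oplus \cdots \oplus \mathbb{Z} v_{n-m}$ yields $V = V_1 \oplus V_2$, $\ker(f) = 2V_1 \oplus V_2$, and an isomorphism $V_1/2V_1 \cong W$ induced by $f$, as required.

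For part (2), I would first apply (1) to obtain $V = V_1' \oplus V_2'$ with $f$ inducing an isomorphism $\varphi\colon V_1'/2V_1' \xrightarrow{\cong} W$, and then modify the basis of $V_1'$ to align $\varphi$ with the splitting $W = W_1 \oplus W_2$. The key input is the surjectivity of the reduction map $GL_m(\mathbb{Z}) \twoheadrightarrow GL_m(\mathbb{F}_2)$, which follows from the fact that elementary transvections (together with sign changes) generate $GL_m(\mathbb{Z})$ and their images generate $GL_m(\mathbb{F}_2)$. Using this, I can lift an automorphism of $W$ sending the standard basis of $V_1'/2V_1'$ to any preferred $\mathbb{F}_2$-basis of $W$ adapted to $W = W_1 \oplus W_2$, producing a new basis $e_1, \ldots, e_m$ of $V_1'$ whose first $k := \dim_{\mathbb{F}_2} W_1$ elements map to a basis of $W_1$ and whose last $m-k$ elements map to a basis of $W_2$.

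Finally, I would set $V_1 := \bigoplus_{i=1}^{k} \mathbb{Z} e_i$ and $V_2 := \bigoplus_{i=k+1}^{m} \mathbb{Z} e_i \oplus V_2'$, and check the three conditions directly: $V = V_1 \oplus V_2$ by construction; $f$ induces $V_1/2V_1 \cong W_1$ because the $\bar e_1, \ldots, \bar e_k$ map to a basis of $W_1$ and $V_1 \cap \ker(f) = V_1 \cap (2V_1' \oplus V_2') = 2V_1$; and $f(V_2) = W_2$ since $V_2' \subseteq \ker(f)$ while $f(e_{k+1}), \ldots, f(e_m)$ span $W_2$. No step presents a genuine obstacle: the entire argument is a bookkeeping exercise built on the structure theorem for finitely generated $\mathbb{Z}$-modules and the surjection $GL_m(\mathbb{Z}) \to GL_m(\mathbb{F}_2)$.
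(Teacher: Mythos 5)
Your part (1) is the same argument as the paper's: apply Smith normal form to the inclusion $\ker(f)\hookrightarrow V$, observe that the invariant factors are $1$'s and $2$'s since $V/\ker(f)\cong(\mathbb{Z}/2)^{\oplus m}$, and take $V_1$ spanned by the basis vectors whose invariant factor is $2$. No difference there.

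For part (2) you take a genuinely different route. The paper's published proof is extremely terse: it sets $V'=V_1'$ from part (1), lets $f'\colon V'\to W$ be the induced surjection, and asserts $W_1=f'^{-1}(W_1)/2$ and $f(f'^{-1}(W_2)\oplus V'')=W_2$. Read literally this does not produce a direct sum decomposition, since $f'^{-1}(W_1)$ and $f'^{-1}(W_2)$ both have full rank $m$ in $V'$ and intersect in $2V'$; one has to run a second Smith-normal-form (or complementation) argument inside $f'^{-1}(W_1)\subseteq V'$ to extract an actual rank-$k$ complement, and the write-up omits this step. Your approach instead lifts the problem to a basis-change statement: since $f'$ identifies $V_1'/2V_1'$ with $W$, adapt an $\mathbb{F}_2$-basis of $W$ to the splitting $W_1\oplus W_2$ and lift it to a $\mathbb{Z}$-basis of $V_1'$ using the surjectivity of $GL_m(\mathbb{Z})\twoheadrightarrow GL_m(\mathbb{F}_2)$ (which holds because $GL_m(\mathbb{F}_2)=SL_m(\mathbb{F}_2)$ is generated by elementary matrices, each of which lifts). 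Your verifications of $V_1\cap\ker f=2V_1$ and $f(V_2)=W_2$ are correct. This is a cleaner, more explicit argument that sidesteps the ambiguity in the paper's preimage construction, at the modest cost of invoking the (standard) $GL$-lifting fact; either route is elementary, but yours is fully checkable as written.
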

\begin{proof}
\begin{enumerate}
\item We have a homomorphism \(\varphi\colon\mathbb{Z}^n=\ker(f)\longrightarrow V\). By Smith normal form, there are basis \(\{e_i\},\{e'_i\},1\leq i\leq n\) of \(\ker(f)\) and \(V\) respectively such that
\[
\varphi(e_s)=\begin{cases}2e'_s&s\leq m.\\e'_s&s>m.\end{cases}
\]
Set \(V_1=span\{e'_1,\cdots,e'_{m}\}\) and \(V_2=span\{e'_{m+1},\cdots,e'_n\}\) to conclude.
\item Denote by \(V=V'\oplus V''\) the decomposition given by (1) and by \(f'\) the composite \(V'\to V\xrightarrow{f}W\). Then
\[W_1=f'^{-1}(W_1)/2, f(f'^{-1}(W_2)\oplus V'')=W_2.\]
\end{enumerate}
\end{proof}
\begin{theorem}\label{tate0}
Over an arbitrary field \(k\), suppose that \(A\) is a Tate MW-motive with \(\mathrm{H}^*(A,\mathbf{W})=0\). Then, it is a direct sum of motives of the form \(\mathbb{Z}/\eta(i)[2i]\).
\end{theorem}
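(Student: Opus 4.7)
The plan is to reduce the statement to a numerical matching controlled by Proposition \ref{faithful}. Since $A$ is a Tate MW-motive, $\gamma^*(A)=\bigoplus_i\mathbb{Z}(i)[2i]^{\oplus m_i}$ in $\mathrm{DM}(k,\mathbb{Z})$, and matching against $\gamma^*(\mathbb{Z}/\eta)=\mathbb{Z}\oplus\mathbb{Z}(1)[2]$ forces any candidate isomorphism $A\simeq B:=\bigoplus_i\mathbb{Z}/\eta(i)[2i]^{\oplus n_i}$ to satisfy $m_i=n_i+n_{i-1}$, and hence $n_i=\sum_{j\le i}(-1)^{i-j}m_j$. The two remaining tasks are to prove $n_i\ge 0$ and to construct a morphism $f\colon B\to A$ realising this decomposition.

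For the non-negativity, I would exploit the weight Postnikov filtration on $A$. The computation $[\mathbb{Z}(i)[2i],\mathbb{Z}(j)[2j+1]]_{\widetilde{\mathrm{DM}}(k,\mathbb{Z})}=\mathbf{W}(k)\cdot\eta$ when $j=i-1$ (and $0$ otherwise) shows that the extension data of $A$ packages into a bounded complex of free $\mathbf{W}(k)$-modules
\[
V_0\otimes\mathbf{W}(k)\xrightarrow{d_0}V_1\otimes\mathbf{W}(k)\xrightarrow{d_1}\cdots,\qquad V_i=\mathbb{Z}^{m_i},
\]
whose cohomology computes $\H^*(A,\mathbf{W})$: after inverting $\eta$, each $L(\mathbb{Z}(i)[2i])$ is identified with $\mathbb{Z}_\eta$ via the (now invertible) map $\eta$, and $L(A)$ is exactly represented by the above complex in $\widetilde{\mathrm{DM}}(k,\mathbb{Z})[\eta^{-1}]$. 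The hypothesis forces acyclicity; being a bounded acyclic complex of free $\mathbf{W}(k)$-modules, it is split exact, decomposing as a sum of trivial 2-term pieces $\mathbf{W}(k)^{n_i}\xrightarrow{\mathrm{id}}\mathbf{W}(k)^{n_i}$ placed in positions $(i,i+1)$. The $n_i$ are thus ranks of boundary spaces and automatically non-negative.

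Having pinned down the multiplicities, I would construct $f$ by induction on the number of nonzero Postnikov layers of $A$: peel off the topmost 2-term piece of the splitting as a direct summand $\mathbb{Z}/\eta(i_0)[2i_0]^{\oplus n_{i_0}}$ of $A$, and apply the induction hypothesis to the complementary factor (which remains Tate with vanishing Witt cohomology). Once $f$ is in hand, Proposition \ref{faithful} finishes the argument: $\H^*(f,\mathbf{W})$ is trivially an isomorphism (both sides vanish), while $\mathrm{CH}^*(f)$ is an isomorphism by the numerical match of the first paragraph.

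The main obstacle is the peeling-off step. Splitting off a $\mathbb{Z}/\eta(i)[2i]$-summand amounts to constructing a morphism $\mathbb{Z}/\eta(i)[2i]\to A$ whose composite with the projection to the top Postnikov layer of $A$ embeds as a direct summand. The natural candidates live over $\mathbf{W}(k)$, but the relevant basis changes in $\widetilde{\mathrm{DM}}(k,\mathbb{Z})$ are controlled by $\mathbf{GW}(k)$, and the reduction $\mathrm{GL}_n(\mathbf{GW}(k))\to\mathrm{GL}_n(\mathbf{W}(k))$ need not be surjective over an arbitrary field. My strategy to bypass this is to use the vanishing of the relevant obstruction class in the Bockstein spectral sequence of $A$ to produce the required lift directly, rather than trying to lift a basis change; the obstruction is controlled by a further piece of the acyclic complex and therefore already vanishes.
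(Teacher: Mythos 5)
Your plan takes a genuinely different route from the paper, but it leaves the hardest part unresolved and misses the observation that makes the result tractable over an arbitrary field. The paper's first move is to deduce from $\mathrm{H}^*(A,\mathbf{W})=0$ (via the vanishing of $[A,\mathbb{Z}(i)[2i+1]]$ and then $[A(1)[2],A[1]]$) that $\eta\otimes\mathrm{Id}_A=0$. This is a much stronger statement than $L(A)=0$: it immediately yields $A/\eta\simeq A\oplus A(1)[2]$, forces the map $\mathrm{H}^{*,*}_{\mathrm{MW}}(A,\mathbb{Z})\to E_1^{*,*}(A)$ to be injective, and hence (by Proposition~\ref{1}) shows that $A$ has up to $\eta$-torsion, so the Bockstein spectral sequence already degenerates at the second page. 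Nothing in your plan establishes this, and without it the lifting problem you identify in your last paragraph is genuinely hard, not a technicality.

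Indeed, the peeling-off step is the crux, and your proposed workaround is too vague to evaluate: you say the obstruction ``is controlled by a further piece of the acyclic complex and therefore already vanishes,'' but there is no identification of what the obstruction class lives in, nor why the acyclicity of a complex over $\mathbf{W}(k)$ would imply its vanishing over $\mathbf{GW}(k)$ or in $\widetilde{\mathrm{CH}}^*(A)$. The paper sidesteps the entire $\mathrm{GL}_n(\mathbf{GW}(k))\to\mathrm{GL}_n(\mathbf{W}(k))$ lifting issue by working not over $\mathbf{W}(k)$ but with $\mathrm{CH}^*(A)$, which is a free $\mathbb{Z}$-module whenever $\gamma^*(A)$ is Tate; it then applies Smith normal form over $\mathbb{Z}$ (Lemma~\ref{linear}) to produce explicit classes $t_{n,i}\in E_1^{2n,n}(A)$, each of which, because $A$ has up to $\eta$-torsion, lifts to a genuine morphism $A\to\mathbb{Z}/\eta(n)[2n]$; Proposition~\ref{faithful} then finishes. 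This is why the theorem holds over an \emph{arbitrary} field. There is also a smaller issue with your first paragraph: Tate MW-motives are closed under direct summands, and the ``weight Postnikov filtration'' you invoke is not available in the naive form for such objects in $\widetilde{\mathrm{DM}}(k,\mathbb{Z})$ (the required orthogonality $[\mathbb{Z}(i)[2i],\mathbb{Z}(j)[2j][-1]]=0$ fails, e.g. for $j=i+1$), and over fields where $\mathbf{GW}(k)$ has nontrivial idempotents the putative weight pieces of a direct summand need not be free, so the complex of free modules you write down may not exist.
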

\begin{proof}
Since \(\mathrm{H}^*(A,\mathbf{W})=0\), we have \([A,\mathbb{Z}(i)[2i+1]]_{\widetilde{\mathrm{DM}}(k,\mathbb{Z})}=0\) for all \(i\in\mathbb{N}\). Since \(A\) is Tate, we conclude that
\[[A(1)[2],A[1]]_{\widetilde{\mathrm{DM}}(k,\mathbb{Z})}=0.\]
In particular, \(\eta\otimes Id_A=0\). So there is a decomposition
\[A/\eta=A\oplus A(1)[2],\]
which implies that the map
\[\mathrm{H}^{*,*}_{\mathrm{MW}}(A,\mathbb{Z})\longrightarrow E(MW)_1^{*,*}(A)\]
is injective. So, \(A\) has up to \(\eta\)-torsion by Proposition \ref{1}.
 
The ring \(\mathrm{CH}^*(A)\) is free since \(\gamma^*(A)\) is Tate. By Proposition \ref{bockstein} and the discussion above, we have
\[\ker(Sq^2\circ\pi)^{2*,*}(A)=\pi^{-1}(\mathrm{Im}(Sq^2))^{2*,*}(A).\]
We have a commutative diagram for \(A\)
\[
	\xymatrix
	{
		\pi^{-1}(\mathrm{Im}(Sq^2))^{2n,n}\ar[r]^-{\varphi}\ar[dr]_{\pi}	&\mathrm{CH}^{n-1}/\ker(Sq^2\circ\pi)^{2n-2,n-1}\ar[d]_{Sq^2\circ\pi}^{\cong}\\
																						&\mathrm{Im}(Sq^2)^n.
	}
\]
By Lemma \ref{linear}, there is a basis \(\{y_{n,i},z_{n,j}\}\) of \(\mathrm{CH}^n(A)\) such that
\[\sum_i\alpha_iy_{n,i}+\sum_j\beta_jz_{n,j}\in \ker(Sq^2\circ\pi)^{2n,n}\]
if and only if \(\alpha_i\) are even for every \(i\). Then \(\pi^{-1}(\mathrm{Im}(Sq^2))^{2n,n}/(\oplus_i2\mathbb{Z}y_{n,i})=\oplus_j\mathbb{Z}z_{n,j}\) is free \(\mathbb{Z}\)-module, the rank of which is equal to that of \(\mathrm{CH}^{n-1}/\ker(Sq^2\circ\pi)^{n-1}\) as a \(\mathbb{Z}/2\)-module. So there is a basis \(\{w_{n,j}\}\) of \(\pi^{-1}(\mathrm{Im}(Sq^2))^{2n,n}/(\oplus_i2\mathbb{Z}y_{n,i})\) satisfying
\[\varphi(w_{n,j})=\pi(y_{n-1,j}),\]
and we obtain classes \(t_{n,i}=(y_{n,i},w_{n+1,i})\in E_1^{2n,n}(A)\) satisfying
\[
\ker(Sq^2\circ\pi)^n(A)=(\oplus_i2\mathbb{Z}y_{n,i})\oplus(\oplus_j\mathbb{Z}w_{n,j}).
\]
Hence
\[\mathrm{CH}^n(A)=\left(\oplus_i\mathbb{Z}y_{n,i}\right)\oplus\left(\oplus_j\mathbb{Z}w_{n,j}\right).\]
So
\[A\xrightarrow{t_{n,i}}\oplus_{n,i}\mathbb{Z}/\eta(n)[2n]\]
induces an isomorphism on \(\mathrm{CH}^*\) and \(\mathrm{H}^*(-,\mathbf{W})\), which concludes the proof by Proposition \ref{faithful}.
\end{proof}

\begin{proposition}\label{bounded}
The following assertions are verified:
\begin{enumerate}
\item Every Tate MW-motive \(A\) has up to \(\eta^r\)-torsion for some \(r\). Hence that hypothesis in Proposition \ref{faithful} is unnecessary.
\item For any Tate MW-motive \(A\), there is a map
\[A\longrightarrow\oplus_i\mathbb{Z}(i)[2i]\oplus\oplus_{i,j}\mathbb{Z}/j\eta(i)[2i]\]
inducing isomorphisms on \(\mathrm{H}^*(-,\mathbf{W})\) and \(E_i^{*,*}(-)\) for \(i\geq 2\).
\item For any Tate MW-motive \(A\), we have
\[\mathbf{I}^{2q-p}(k)\mathrm{H}^{p-q}(A,\mathbf{W})=\mathrm{H}^{p,q}_{\eta}(A).\]
\end{enumerate}
\end{proposition}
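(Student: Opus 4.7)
The plan is to establish the three assertions in the order stated, with (1) providing the main technical input that is then used to deduce (2) and, in turn, (3). For (1), I would proceed by induction on the length of a cellular filtration that presents $A$ as a mixed Tate MW-motive (and then extend by taking direct summands). In the base case $A=\mathbb{Z}(i)[2i]$, the MW-cohomology is a shift of $\mathbf{K}^{\mathrm{MW}}_*(k)$; the $\eta$-action kills the Milnor K-theory quotient via the surjection $\mathbf{K}^{\mathrm{MW}}_n\twoheadrightarrow\mathbf{K}^{\mathrm{M}}_n$, while on the subgroup $\mathbf{I}^n(k)=2^n\mathbb{Z}$ (using the Euclidean hypothesis) it acts as multiplication by $2$, which is injective. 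Hence $\ker(\eta^2)=\ker(\eta)$ in the base case. For the inductive step, apply the long exact sequence in MW-cohomology to a distinguished triangle $B\to A\to C$ with $B,C$ Tate of strictly smaller length, and combine the torsion bounds for $B$ and $C$ by a diagram chase on the Bockstein differentials.

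For (2), I would use the detection criterion provided by Corollary \ref{identification} together with (1). Decompose the finitely generated abelian group $\mathrm{H}^*(A,\mathbf{W})$ into cyclic summands using Smith normal form (as in Lemma \ref{linear}), yielding free generators in certain bidegrees and torsion generators of various orders in others. For each free generator, choose a lift to a class in $\mathrm{H}^{2i,i}_{\mathrm{MW}}(A,\mathbb{Z})$ and use it to produce a map $A\to\mathbb{Z}(i)[2i]$; for each $\mathbb{Z}/j$-torsion generator, use Proposition \ref{leta} together with Proposition \ref{wittpart}(3) to find a factorization through $\mathbb{Z}/j\eta(i)[2i]$. The sum of these maps is the desired morphism. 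The isomorphism on $\mathrm{H}^*(-,\mathbf{W})$ is built into the construction, while the isomorphism on $E_i^{*,*}$ for $i\geq 2$ follows from Proposition \ref{bockstein}: the $E_i$-pages of Tate MW-motives are determined by the mod-$2$ motivic cohomology and the Witt cohomology, both of which match on source and target by construction.

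For (3), I would use the decomposition of (2) and additivity of both sides of the asserted formula to reduce to the summand types $\mathbb{Z}(i)[2i]$ and $\mathbb{Z}/j\eta(i)[2i]$. On $\mathbb{Z}(i)[2i]$, the statement reduces (by shift) to the point case $A=\mathbb{Z}$, where the image of $\eta\colon\mathbf{K}^{\mathrm{MW}}_{q+1}(k)\to\mathbf{K}^{\mathrm{MW}}_{q}(k)$ is exactly $\mathbf{I}^q(k)$, matching $\mathbf{I}^{2q-p}(k)\mathrm{H}^{p-q}(\mathbb{Z},\mathbf{W})$ at the bidegree $(p,q)=(2i,i)$. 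On $\mathbb{Z}/j\eta(i)[2i]$, Proposition \ref{leta} provides the explicit structure of $\mathrm{H}^{*,*}_{\mathrm{MW}}$, and a direct computation of the image of $\eta$ yields the claimed identification.

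The main obstacle will be (1): carefully tracking the stabilization of $\ker(\eta^r)$ through the cellular filtration and ensuring that the inductive bound does not grow uncontrollably. The base case depends essentially on the Euclidean hypothesis through $\mathbf{I}^n(k)=2^n\mathbb{Z}$. Once (1) is established, assertions (2) and (3) follow by explicit construction and computation as outlined above.
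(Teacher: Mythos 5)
Your overall plan diverges from the paper's in a way that creates real gaps. For (1), the paper does not induct on a cellular filtration: it uses Proposition \ref{wittpart} to produce a map $C[-1]\to A$ (with $C[-1]$ an explicit direct sum of $\mathbb{Z}/\mathbf{p}^r[i]$ and $\mathbb{Z}/2^r\eta(i)[2i-1]$) inducing an isomorphism after $\eta$-inversion and with Witt-acyclic cone $B$, then invokes Theorem \ref{tate0} to identify $B$ as a sum of $\mathbb{Z}/\eta(i)[2i]$. Since $\eta$ acts by zero on $\mathrm{H}^{*,*}_{\mathrm{MW}}(B)$, the torsion bound for $A$ is $r+1$ by a clean chase in the long exact sequence. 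Your inductive step — ``combine the torsion bounds for $B$ and $C$ by a diagram chase'' — is not as routine as it sounds: if $\eta^{r_B}x$ lies in the image of $v^*:\mathrm{H}(C)\to \mathrm{H}(A)$, the preimage $y$ need not be $\eta^\infty$-torsion (only $\eta^Ny\in\ker(v^*)=\mathrm{Im}(w^*)$), so the naive bound $r_B+r_C$ does not fall out of the sequence without controlling $\mathrm{Im}(w^*)$. The paper's choice of a triangle in which one term has $\eta$-torsion exactly $1$ is precisely what makes this chase elementary.

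The more serious issues are in (2) and (3). You assert that the isomorphism on $E_i^{*,*}$ for $i\geq 2$ ``follows from Proposition \ref{bockstein}'' because the pages are ``determined by mod-$2$ motivic cohomology and Witt cohomology.'' That proposition says no such thing; it identifies the MW-Bockstein $E_2$ with the $Sq^2$-cohomology and with the Witt-Bockstein $E_i$ ($i\ge2$), not that they are functors of $\mathrm{H}^*(-,\mathbf{W})$. The actual input the paper uses is the claim $E^{p,q}_2(A)=\mathrm{H}^{p-q}(A,\mathbf{W}/2)$ for $p\le 2q$ (and $0$ otherwise), whose proof requires base change to $k(\sqrt{-1})$, the snake lemma, and an induction on $2q-p$; you do not address this. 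Moreover, your construction is unlikely to produce a map inducing an isomorphism on $\mathrm{Ch}^*$, so without this identification you cannot conclude that iso on $\mathrm{H}^*(-,\mathbf{W})$ forces iso on the Bockstein pages. For (3), ``use the decomposition of (2)'' is not available: (2) does not decompose $A$ as a direct sum, it only produces a map that is an isomorphism on $\mathrm{H}^*(-,\mathbf{W})$ and on $E_i$, and neither of these a priori determines $\mathrm{H}^{p,q}_\eta(A)$, which is an image of $\mathrm{H}^{*,*}_{\mathrm{MW}}(A)$ inside its $\eta$-localization. The paper instead transfers the statement to the explicit $C$ via the shift isomorphism $\mathrm{H}^{p,q}_\eta(A)=\mathrm{H}^{p+2,q+1}_\eta(C)$ coming from the comparison triangle, and then computes directly using Proposition \ref{leta}. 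In short: the comparison triangle is the load-bearing device for all three parts, and replacing it with filtration induction plus generator-lifting leaves each part with an unclosed hole.
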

\begin{proof}
We proceed as follows:
\begin{enumerate}
\item By Proposition \ref{wittpart}, there is a map
\[h[-1]:C[-1]=\oplus_{p,r,i}\mathbb{Z}/\mathbf{p}^r[i]\oplus\oplus_{r,i}\mathbb{Z}/2^r\eta(i)[2i-1]\longrightarrow A\]
such that \(\mathrm{H}^*(C(h[-1]),\mathbf{W})=0\) and \(L(h[-1])\) is an isomorphism. So we have a distinguished triangle
\[
A\xrightarrow{f}B\longrightarrow C\xrightarrow{h}A[1]
\]
where \(B\) is a direct sum of \(\mathbb{Z}/\eta(i)[2i]\) by Theorem \ref{tate0} and \(C\) has up to \(\eta^r\)-torsion for some \(r\) by Proposition \ref{leta}. For any \(x\in \mathrm{H}^{*,*}_{\mathrm{MW}}(A,\mathbb{Z})\) with \(\eta^nx=0\) for some \(n\geq r\), we have \(h^*(\eta^nx)=0\) so \(h^*(\eta^rx)=0\). So, there is \(y\in \mathrm{H}^{*,*}_{\mathrm{MW}}(B,\mathbb{Z})\) such that \(f^*(y)=\eta^rx\). But \(\eta y=0\), yielding \(\eta^{r+1}x=0\).
\item We claim that
\[
E^{p,q}_2(A)=\begin{cases}\mathrm{H}^{p-q}(A,\mathbf{W}/2)&p\leq 2q.\\0&p>2q.\end{cases}
\]
If \(p>2q\), \(\mathrm{H}^{p,q}_{\mathrm{M}}(A,\mathbb{Z}/2)=0\) so the claim follows. Let \(k'\) be the field \(k\sqrt{-1}\). If \(p=2q\), then \(\mathrm{CH}^*(A)=\mathrm{CH}^*(A\times_kk')\) so \(E^{p,q}_2(A)=E^{p,q}_2(A\times_kk')\). Now, \(A\times_kk'\) is a direct sum of \(\mathbb{Z}(i)[2i]\) and \(\mathbb{Z}/\eta(i)[2i]\) since \(\mathbf{W}(k')=\mathbb{Z}/2\). So we have
\[E^{p,q}_2(A\times_kk')=\mathrm{H}^{p-q}(A\times_kk',\mathbf{W})=\mathrm{H}^{p-q}(A,\mathbf{W}/2)\]
by \cite[Theorem 2.22]{Y2} and the universal coefficient theorem. If \(p<2q\), we have an exact sequence
\[0\longrightarrow \mathrm{H}_{\mathrm{M}}^{p-1,q-1}(A,\mathbb{Z}/2)\xrightarrow{\rho}\mathrm{H}_{\mathrm{M}}^{p,q}(A,\mathbb{Z}/2)\longrightarrow \mathrm{H}_{\mathrm{M}}^{p,q}(A\times_kk',\mathbb{Z}/2)\longrightarrow 0.\]
They are free \(\mathbb{Z}/2[\tau]\)-modules hence the sequence is exact after taking quotients by \(\tau\). We will prove the claim by induction on \(2q-p\), the case \(2q=p\) being established. One easily shows (3) for \(A\times_kk'\) so by \textit{loc. cit.} we have
\[E^{p,q}_2(A\times_kk')=0, p<2q.\]
Applying the Snake lemma to the sequence above gives
\[E^{p,q}_2(A)=E^{p-1,q-1}_2(A), p<2q.\]
Then the claim follows by induction hypothesis.

In the context above, since \([C,A/\eta[1]]_{\widetilde{\mathrm{DM}}(k,\mathbb{Z})}=0\), there is a map \(f:C\longrightarrow A(1)[2]\) which factors through the boundary map \(C\longrightarrow A[1]\), which induces an isomorphism on \(\mathrm{H}^*(-,\mathbf{W})\), hence on \(\mathrm{H}^*(-,\mathbf{W}/2)\). So it induces an isomorphism
\[E^{p,q}_2(A)=E^{p+2,q+1}_2(C)\]
by the claim before. Hence so does each \(E_i\)-page for \(i\geq 2\).
\item In the context above, we have
\[\mathrm{H}^{p,q}_{\eta}(A)=\mathrm{H}^{p+2,q+1}_{\eta}(C).\]
So it suffices to check the statement for \(C\), which follows from Proposition \ref{leta}.
\end{enumerate}
\end{proof}
Denote by \(h^{*,*}=\textrm{H}^{*,*}_{\textrm{M}}(k,\mathbb{Z}/2)=\mathbb{Z}/2[\rho,\tau]\). It naturally acts on \(E^{*,*}_i(A),i\geq 2\) for every Tate MW-motive \(A\), where the \(\tau\) acts trivially by Proposition \ref{bockstein}.
\begin{proposition}\label{higher}
Suppose that \(i\geq 2,j\in\mathbb{N}\) and that \(p,q\in\mathbb{Z}\). The groups \(u,v\) of \(E^{0,0}_i(\mathbb{Z}/2^j\eta) (\textrm{if }i\leq j+1,j>0)\) and \(E^{2,1}_i(\mathbb{Z}/2^j\eta)(\textrm{if }j>0)\) are isomorphic to \(\mathbb{Z}/2\cdot u\) and \(\mathbb{Z}/2\cdot v\), respectively. We have
\[E^{p,q}_i(\mathbb{Z}/2^j\eta)=\begin{cases}\mathbb{Z}/2\cdot\rho^qu&\textrm{if }j>0,i\leq j+1,p=q,q\geq 0\\\mathbb{Z}/2\cdot\rho^{q-1}v&\textrm{if }j>0,i\leq j+1,p=q+1,q\geq 1\\\mathbb{Z}/2\cdot\rho^{q-1}v&\textrm{if }j>0,p=q+1,0<q<j+1\\0&\textrm{else}\end{cases}.\]
The differential map
\[E^{p,q}_i(\mathbb{Z}/2^j\eta)\longrightarrow E^{p+i+1,q+i}_i(\mathbb{Z}/2^j\eta)\]
is zero unless \(p=q\geq 0\) and \(i=j+1\).
\end{proposition}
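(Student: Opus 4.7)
The plan is to proceed in three main stages.

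First, I would compute the $E_2$-page. Since $\gamma^*(\mathbb{Z}/2^j\eta)=\mathbb{Z}\oplus\mathbb{Z}(1)[2]$ in $\mathrm{DM}(k,\mathbb{Z})$ (because $\eta$ acts as zero there, so the defining triangle splits), the motivic cohomology
\[
\mathrm{H}^{*,*}_{\mathrm{M}}(\mathbb{Z}/2^j\eta,\mathbb{Z}/2) = \mathrm{H}^{*,*}_{\mathrm{M}}(k,\mathbb{Z}/2)\cdot e \oplus \mathrm{H}^{*-2,*-1}_{\mathrm{M}}(k,\mathbb{Z}/2)\cdot w
\]
decomposes compatibly with all Steenrod operations, with $e\in\mathrm{H}^{0,0}$ and $w\in\mathrm{H}^{2,1}$. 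Reducing modulo $\tau$ and using that $Sq^2$ kills $h^{*,*}/\tau=\mathbb{Z}/2[\rho]$ (since $Sq^1\rho=0$ because $\rho$ is an integral reduction, $Sq^2\rho=0$ for degree reasons, and the Cartan formula then forces $Sq^2\rho^n=0$ for all $n$), one concludes that $E_2^{p,q}(\mathbb{Z}/2^j\eta)=\mathbb{Z}/2[\rho]\cdot u \oplus \mathbb{Z}/2[\rho]\cdot v$ with $u$ the class of $e$ and $v$ the class of $w$, matching the claimed additive structure.

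Second, to determine the differentials, I would realize $u$ and $v$ explicitly. Since $\eta$ vanishes on $\mathbb{Z}/\eta$, the canonical map $\mathbb{Z}\to\mathbb{Z}/\eta$ extends to $\tilde u:\mathbb{Z}/2^j\eta\to\mathbb{Z}/\eta$ representing $u$; the octahedron axiom applied to the morphism of triangles with left-vertical scalar $2^j$ yields $k\circ\tilde u=2^j\pi$ in $\mathrm{H}^{2,1}_{\mathrm{MW}}(\mathbb{Z}/2^j\eta,\mathbb{Z})$, where $\pi:\mathbb{Z}/2^j\eta\to\mathbb{Z}(1)[2]$ is the canonical projection. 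Setting $v=j_*(\pi)$, we have $k_*(v)=(k\circ j)_*(\pi)=0$ by exactness of $\mathbb{Z}\to\mathbb{Z}/\eta\to\mathbb{Z}(1)[2]$, so $v$ is a permanent cycle with $d_i(v)=0$ for every $i\geq 2$. For $u$, the exact-couple formalism says that $u$ survives to $E_i$ if and only if $2^j\pi$ is $\eta^{i-1}$-divisible in $\mathrm{H}^{*,*}_{\mathrm{MW}}(\mathbb{Z}/2^j\eta)$, and $d_i(u)=j_*(y)$ where $\eta^{i-1}y=2^j\pi$. Combining Proposition \ref{leta} (with $l=2^j$, $s=1$) with the Euclidean identities $\mathbf{W}(k)=\mathbb{Z}$ and $\mathbf{I}^q(k)=2^q\mathbb{Z}$ and the $\mathbf{GW}(k)$-relation $\mathbf{2}-2=\eta\rho$, I would verify that $2^j\pi$ is $\eta^j$-divisible (with witness $y_j$ whose $j_*$-image generates $E_{j+1}^{j+2,j+1}$) but not $\eta^{j+1}$-divisible, forcing $d_i(u)=0$ for $2\leq i\leq j$ and $d_{j+1}(u)=\rho^j v$ up to a unit.

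Finally, $\rho$-linearity of the differentials (coming from the $\mathbb{Z}/2[\rho]$-module structure on each $E_i$-page) yields $d_{j+1}(\rho^q u)=\rho^{q+j}v$ for all $q\geq 0$. This simultaneously kills the diagonal generators $\rho^q u$ and the super-diagonal classes $\rho^{q'-1}v$ with $q'\geq j+1$ (as they are hit), leaving exactly $\rho^{q-1}v$ for $1\leq q\leq j$ as permanent cycles; the latter survive because any subsequent differential $d_i(\rho^{q-1}v)$ lands in bidegree $(q+i+2,q+i)$, which lies outside the support of $E_i$ by the structure already established. This matches the formula in the proposition. The principal difficulty lies in the second stage, namely verifying the precise $\eta$-divisibility of $2^j\pi$: one must identify $\pi$ with a generator of the direct summand $\mathbf{I}^0(k)/2^j\mathbf{I}^1(k)\cong\mathbb{Z}/2^{j+1}$ appearing in Proposition \ref{leta}, and track multiplications by $\eta$ through the MW-cohomology tower using $\mathbf{2}-2=\eta\rho$, with the defining relation $2^j\eta=0$ in $\mathbb{Z}/2^j\eta$ providing both the mechanism for $\eta^j$-divisibility and the obstruction to $\eta^{j+1}$-divisibility.
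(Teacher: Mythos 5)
Your second stage, namely realizing the generators explicitly (with $\tilde u:\mathbb{Z}/2^j\eta\to\mathbb{Z}/\eta$, $\kappa(u)=2^j\pi$, $v=j_*(\pi)$ a permanent cycle) and then tracking the $\eta$-divisibility of $2^j\pi$ via Proposition~\ref{leta}, is a genuinely different route than the paper's. The paper never touches divisibility directly: it computes the Witt-Bockstein differential inductively on $j$ by factoring through the morphism $1+\epsilon:\mathbb{Z}/2^j\eta\to\mathbb{Z}/2^{j-1}\eta$, reads off the map $b$ from Proposition~\ref{leta}, and then inducts on the page number $i$. Your approach is conceptually cleaner and would be worth completing, but you explicitly defer the crux (``I would verify that $2^j\pi$ is $\eta^j$-divisible but not $\eta^{j+1}$-divisible'') — that computation, which is exactly where the paper spends its effort, is not carried out, so what you have is a plan rather than a proof. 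The $\rho$-linearity you invoke in stage three is fine (the exact couple maps $j,k$ are $H^{*,*}_{\mathrm{MW}}(k)$-linear, so the higher Bocksteins are $\rho$-linear without appealing to the later Proposition~\ref{leibniz}), but the divisibility verification is the load-bearing step.

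There is also a genuine gap in your $E_2$ computation. You argue that $Sq^2$ kills $h^{*,*}/\tau$, that the splitting $\gamma^*(\mathbb{Z}/2^j\eta)\cong\mathbb{Z}\oplus\mathbb{Z}(1)[2]$ is compatible with Steenrod operations, and hence that $Sq^2\equiv 0 \pmod\tau$ on $\mathrm{H}^{*,*}_{\mathrm{M}}(\mathbb{Z}/2^j\eta,\mathbb{Z}/2)$, giving $E_2=\mathbb{Z}/2[\rho]\cdot u\oplus\mathbb{Z}/2[\rho]\cdot v$. But this argument sees only $\gamma^*(\mathbb{Z}/2^j\eta)$, which is \emph{independent of} $j$, so it would give the same nonzero $E_2$-page for $j=0$. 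That contradicts the proposition (which forces $E_i^{p,q}(\mathbb{Z}/\eta)=0$) and also contradicts the direct exact-couple computation: since $\mathrm{H}^*(\mathbb{Z}/\eta,\mathbf{W})=0$, the proof of Theorem~\ref{tate0} shows $\eta\otimes\mathrm{Id}_{\mathbb{Z}/\eta}=0$, hence $\eta$ acts by zero on $D=\mathrm{H}^{*,*}_{\mathrm{MW}}(\mathbb{Z}/\eta,\mathbb{Z})$ and therefore $E_2=Z_1/B_1=\ker\kappa/\mathrm{Im}(j)=0$. So an argument that cannot distinguish $j=0$ from $j>0$ cannot be correct; the differential whose homology computes $E_2$ must retain information beyond $\gamma^*$. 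The paper's proof of Proposition~\ref{higher} makes this $j$-dependence explicit (``which is zero if $j\neq 0$, otherwise it is surjective''), working out the composite through $2^{q-1}\mathbb{Z}/2^{q+j}\to 2^{q-1}\mathbb{Z}/2^q$ using Proposition~\ref{leta}. You need to engage with the actual operation appearing in Proposition~\ref{bockstein} and \cite[Definition~2.11]{Y2}, which is not simply the motivic Steenrod square on $\mathrm{H}^{*,*}_{\mathrm{M}}(\gamma^*F_\bullet,\mathbb{Z}/2)$.
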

\begin{proof}
We compute the Bockstein map
\[h^{p-2,q-1}\oplus h^{p-4,q-2}=H^{p-2,q-1}_M(\mathbb{Z}/2^j\eta,\mathbb{Z}/2)\xrightarrow{\beta} H^{p,q}_W(\mathbb{Z}/2^j\eta,\mathbb{Z}).\]
We have a map between distinguished triangles
\[
	\xymatrix
	{
		\mathbb{Z}(1)[1]\ar[r]^-{2^j\eta}\ar[d]_{1+\epsilon}	&\mathbb{Z}\ar[r]\ar@{=}[d]	&\mathbb{Z}/2^j\eta\ar[r]\ar[d]_{f}	&\mathbb{Z}(1)[2]\ar[d]_{1+\epsilon}\\
		\mathbb{Z}(1)[1]\ar[r]^-{2^{j-1}\eta}							&\mathbb{Z}\ar[r]					&\mathbb{Z}/2^{j-1}\eta\ar[r]^d				&\mathbb{Z}(1)[2]
	},
\]
which induces a commutative diagram
\[
	\xymatrix
	{
		h^{p-4,q-2}\ar[r]^c\ar[d]_{d^*}								&H^{p-2,q-1}_W(k,\mathbb{Z})\ar[d]_{d^*}\\
		h^{p-2,q-1}\oplus h^{p-4,q-2}\ar[r]\ar[d]_{a:=f^*}	&H^{p,q}_W(\mathbb{Z}/2^{j-1}\eta,\mathbb{Z})\ar[d]_{b:=f^*}\\
		h^{p-2,q-1}\oplus h^{p-4,q-2}\ar[r]						&H^{p,q}_W(\mathbb{Z}/2^{j}\eta,\mathbb{Z})
	}.
\]
We have that \(a(x,y)=(x,0)\) since \(1+\epsilon=0\) in \(\textbf{K}_0^M(k)\) and that \(c=0\) since \(Sq^2\) acts trivially on \(h^{*,*}\). By Proposition \ref{leta} and \(1+\epsilon=2\) in \(\textbf{W}(k)\), we have
\[b=\begin{cases}a&p\neq q+1,q\\0&p=q\\2^{q-1}\mathbb{Z}/2^{q+j-1}\xrightarrow{2}2^{q-1}\mathbb{Z}/2^{q+j}&p=q+1\end{cases}.\]
So if \(p\neq q+1,q\), we have \(\beta(x,y)=(0,2^jx)\). If \(p=q\), we have \(\beta(x,y)=2^jx\). If \(p=q+1\), by \cite[Proposition 2.12]{Y2}, the \(\beta\) is the map
\[h^{p-2,q-1}\oplus h^{p-4,q-2}=2^{q-1}\mathbb{Z}/2^q\oplus (2^{q-3}\mathbb{Z}/2^{q-2})\tau\xrightarrow{(2^j,0)}2^{q-1}\mathbb{Z}/2^{q+j}.\]

Let us prove the statement by induction on \(i\). If \(i=2\), using Proposition \ref{bockstein}, we see that it suffices to work out \(Sq^2\) modulo \(\tau\). If \(p=q+1\), it is identified with
\[\rho^{q-1}h^{0,0}=h^{p-2,q-1}\xrightarrow{2^j}2^{q-1}\mathbb{Z}/2^{q+j}\longrightarrow 2^{q-1}\mathbb{Z}/2^q,\] 
which is zero if \(j\neq 0\), otherwise it is surjective. Suppose \((x,y)\in h^{p-2,q-1}/\tau\oplus h^{p-4,q-2}/\tau\), which is zero if \(p\neq q+1,q+2\). If \(p=q+2\), the target of \(Sq^2\) vanishes and
\[H^{p-2,q-1}_M(\mathbb{Z}/2^j\eta,\mathbb{Z}/2)/\tau=h^{q-2,q-2}=\rho^{q-2}h^{0,0}.\]
So
\[Sq^2:H^{p-2,q-1}_M(\mathbb{Z}/2^j\eta,\mathbb{Z}/2)/\tau\longrightarrow H^{p,q}_M(\mathbb{Z}/2^j\eta,\mathbb{Z}/2)/\tau\]
is nonzero, hence an isomorphism, if and only if \(p=q+1,j=0,q\geq 1\). This computes \(E^{*,*}_2(\mathbb{Z}/2^j\eta)\), thus proving the statement for the differential maps when \(i=2\).

Suppose the statement is true for \(i=i_0\geq 2\). So
\[E^{p,q}_{i+1}(\mathbb{Z}/2^j\eta)=E^{p,q}_i(\mathbb{Z}/2^j\eta)\]
if either \(p\neq q,q+1\) or \(p=q+1\) and \(i\neq j+1\) or \(q<0\). If \(p=q+1\) and \(i=j+1\), consider the complex
\[E^{q-j-1,q-j-1}_{j+1}(\mathbb{Z}/2^j\eta)\longrightarrow E^{q+1,q}_{j+1}(\mathbb{Z}/2^j\eta)\longrightarrow 0,\]
which is identified with
\[2^{q-j-1}\mathbb{Z}/2^q\xrightarrow{2^j}2^{q-1}\mathbb{Z}/2^q\longrightarrow 0\]
if \(q\geq j+1\) and with
\[0\longrightarrow2^{q-1}\mathbb{Z}/2^q\longrightarrow 0\]
otherwise. So we have by induction
\[
E^{q+1,q}_{j+2}(\mathbb{Z}/2^j\eta)=\begin{cases}\mathbb{Z}/2\cdot\rho^{q-1}v&0<q<j+1.\\0&q\geq j+1.\end{cases}
\]
Next suppose \(p=q\geq 0\) and \(i=j+1\). We consider the differential map
\[E^{q,q}_{j+1}(\mathbb{Z}/2^j\eta)\longrightarrow\eta^{j}H^{q+j+2,q+j+1}(\mathbb{Z}/2^j\eta,\mathbb{Z})\longrightarrow E^{q+j+2,q+j+1}_{j+1}(\mathbb{Z}/2^j\eta)\]
with zero boundary. By the discussion before, this is identified with the composite
\[2^q\mathbb{Z}/2^{q+1}\xrightarrow{2^j}2^q\mathbb{Z}/2^{q+j+1}\longrightarrow2^{q+j}\mathbb{Z}/2^{q+j+1},\]
which is an isomorphism. So
\[E^{q,q}_{j+2}(\mathbb{Z}/2^j\eta)=0,\]
which implies
\[E^{q+1,q}_i(\mathbb{Z}/2^j\eta)=\mathbb{Z}/2\cdot\rho^{q-1}v\]
if \(0<q<j+1\) and \(i\geq j+2\). We conclude the proof.
%The computation of \(E^{*,*}(\mathbb{Z}/\eta)\) in \cite[Proposition 2.13]{Y2} and simpleness of \(\mathbb{Z}/\eta\) gives the statement for \(j=0\). 
\end{proof}

\begin{corollary}\label{dim}
Suppose \(A\) that is Tate and that
\[H^n(A,\textbf{W})=\oplus_{j=1}^{\infty}(\mathbb{Z}/2^j)^{\oplus x_{n,j}}\oplus\textrm{odd torsions},\]
where (\(\mathbb{Z}/2^{\infty}:=\mathbb{Z}\)).
We have
\[\oplus_tE_i^{2n+t,n+t}(A)=\oplus_{j=1}^{i-2}(\mathbb{Z}/2[\rho]/\rho^j)^{\oplus x_{n-1,j}}\oplus\mathbb{Z}/2[\rho]^{\oplus_{j=i-1}^{\infty}x_{n-1,j}+x_{n,j}}\]
as \(\mathbb{Z}/2[\rho]\)-modules for \(i\geq 2\). Moreover, the Bockstein
\[\oplus_tE_i^{2n+t,n+t}(A)\longrightarrow\oplus_tE_i^{2n+2+t,n+1+t}(A)\]
is identified with the composite
\[\oplus_tE_i^{2n+t,n+t}(A)\xrightarrow{proj}\mathbb{Z}/2[\rho]^{\oplus_{j=i-1}^{\infty}x_{n,j}}\xrightarrow{\rho^{i-1}}\mathbb{Z}/2[\rho]^{\oplus_{j=i-1}^{\infty}x_{n,j}}\subseteq\oplus_tE_i^{2n+2+t,n+1+t}(A).\]
\end{corollary}
Let us consider the Leibniz formula of the Bockstein spectral sequence. For Tate MW-motives, there is an exterior product on \(E(\textrm{M/2})_1^{*,*}\) defined by \((a,b)\times(c,d)=(a\times c,b\times c+a\times d)\) as in \cite[Proposition 2.14]{Y2}. The map
\[\begin{array}{ccc}E(\textrm{W})_1^{*,*}(A)=H^{*,*}_M(A,\mathbb{Z})\oplus H^{*+2,*}_M(A,\mathbb{Z})&\longrightarrow&H^{*,*}_M(A,\mathbb{Z})\oplus H^{*+2,*+1}_M(A,\mathbb{Z})=E(\textrm{M/2})_1^{*,*}(A)\\(x,y)&\longmapsto&(x,Sq^2(x)+\tau y)\end{array}\]
is injective for every Tate \(A\) since \(\tau\) is a non-zero-divisor. This gives \(E(\textrm{W})_1^{*,*}\) an exterior product with
\[(a,b)\times(c,d)=(a\times c,Sq^1(a)\times Sq^1(c)+a\times d+b\times c)\]
by Cartan formula, with \(\beta_1(x\times y)=x\times\beta_1(y)+\beta_1(x)\times y\) inherited from that of \(E(\textrm{M/2})_1^{*,*}\). Note that the exterior product on \(E(\textrm{MW})_1^{*,*}\) does not satisfy the Leibniz formula.

In general, if the Leibniz formula
\[\beta_i(x\times y)=x\times\beta_i(y)+\beta_i(x)\times y\]
holds for all Tate MW-motives, their \(E_{i+1}^{*,*}\) admits an exterior product automatically.
\begin{proposition}\label{leibniz1}
In the context above, the exterior product
\[E_{i}^{p,q}(\mathbb{Z}/2^j\eta)\times E_{i}^{s,t}(\mathbb{Z}/2^k\eta)\longrightarrow E_{i}^{p+s,q+t}(\mathbb{Z}/2^j\eta\otimes\mathbb{Z}/2^k\eta)\]
satisfies the Leibniz formula.
\end{proposition}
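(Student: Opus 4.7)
The plan is to reduce the Leibniz formula to a finite verification on the explicit generators produced by Proposition \ref{higher}. Without loss of generality assume $1\leq j\leq k$ (the edge cases where one exponent is $0$, meaning one factor is the idempotent $\mathbb{Z}/\eta$, can be handled separately since its Bockstein spectral sequence carries no differentials). By Proposition \ref{wittpart}(5) we have a splitting $\mathbb{Z}/2^j\eta\otimes\mathbb{Z}/2^k\eta\simeq\mathbb{Z}/2^j\eta\oplus\mathbb{Z}/2^j\eta(1)[2]$, which yields a decomposition
\[
E_i^{*,*}(\mathbb{Z}/2^j\eta\otimes\mathbb{Z}/2^k\eta)\simeq E_i^{*,*}(\mathbb{Z}/2^j\eta)\oplus E_i^{*-2,*-1}(\mathbb{Z}/2^j\eta).
\]
By Proposition \ref{higher}, each factor $E_i^{*,*}(\mathbb{Z}/2^m\eta)$ is a $\mathbb{Z}/2[\rho]$-module generated by classes $u_m\in E^{0,0}$ and $v_m\in E^{2,1}$, with the only nonzero differential being $\beta_{m+1}(u_m)=\rho^m v_m$ and $\beta_{m+1}(v_m)=0$.

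Since the exterior product is $\mathbb{Z}/2[\rho]$-bilinear, it suffices to verify Leibniz on the four pairs of generators. The first step is to identify each of $u_j\times u_k$, $u_j\times v_k$, $v_j\times u_k$, $v_j\times v_k$ as an element of the decomposition above. Bidegree constraints together with Proposition \ref{higher} already force many components to vanish; for example $u_j\times u_k$ lies in $E_i^{0,0}$ of the tensor product, whose second summand $E_i^{-2,-1}(\mathbb{Z}/2^j\eta)$ is zero, so $u_j\times u_k$ is a scalar multiple of $u_j$, and the scalar equals $1$ by naturality with respect to the unit $\mathbb{Z}\to\mathbb{Z}/2^k\eta$. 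The remaining scalars are pinned down by analogous naturality arguments against the unit maps $\mathbb{Z}\to\mathbb{Z}/2^m\eta$ and the boundary morphisms $\mathbb{Z}/2^m\eta\to\mathbb{Z}(1)[2]$ of the triangles defining the mapping cones.

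With these identifications in hand, Leibniz reduces to a routine check. For $i\notin\{j+1,k+1\}$ the formula is trivial, as all relevant differentials vanish by Proposition \ref{higher}. For $i\in\{j+1,k+1\}$, substitute $\beta_{m+1}(u_m)=\rho^m v_m$, $\beta_{m+1}(v_m)=0$ into both sides; when $j=k$ both terms on the right-hand side contribute simultaneously, while for $j<k$ only one factor is alive at each of the two pages. In every case the resulting equality is an elementary identity in $\mathbb{Z}/2[\rho]$.

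The hard part will be the identification step. Constructing the splitting of Proposition \ref{wittpart}(5) explicitly enough to trace the exterior product on generators would be cumbersome; the cleanest route is to use naturality of the product against the unit and boundary maps of the mapping cones defining $\mathbb{Z}/2^m\eta$, which determine each scalar uniquely once the bidegree constraints have cut down the ambiguity. Once this identification is in place, the remainder of the proof is straightforward bookkeeping.
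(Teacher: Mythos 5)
Your plan is essentially the same as the paper's in its key moves, but routes through an extra layer of abstraction that you then acknowledge is the hard part and do not carry out. The paper does \emph{not} pass through the splitting of Proposition~\ref{wittpart}(5) and then match the K\"unneth classes to its summands; instead it works directly with the K\"unneth basis $\{u\times u,\,u\times v,\,v\times u,\,v\times v\}$ of $\H^{*,*}_{\mathrm{M}}(\mathbb{Z}/2^j\eta\otimes\mathbb{Z}/2^k\eta,\mathbb{Z}/2)$, and then pins down $\beta_i$ on these classes by precisely the naturality you propose: against the unit maps $\mathbb{Z}\to\mathbb{Z}/2^m\eta$ (via the pullbacks $(Id\otimes p)^*,\ (p\otimes Id)^*$) and against the boundary maps $\partial:\mathbb{Z}/2^m\eta\to\mathbb{Z}(1)[2]$ (via $(Id\otimes\partial)^*,\ (\partial\otimes Id)^*$). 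In other words, the naturality computations are not merely a device for \emph{identifying} K\"unneth classes inside an abstract direct-sum decomposition --- they \emph{are} the computation of $\beta_i$. Your proposed detour through Proposition~\ref{wittpart}(5) adds no leverage: you must still resolve the $2$-dimensional ambiguity in bidegree $(2,1)$ (where $u\times v$ and $v\times u$ both live) by naturality, and once you have done that you could just as well have computed $\beta_i$ directly, which is what the paper does.

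Beyond this stylistic point, there are two concrete gaps. First, the central identification step --- matching $u\times v$ and $v\times u$ to specific summands under the abstract isomorphism $\mathbb{Z}/2^j\eta\otimes\mathbb{Z}/2^k\eta\simeq\mathbb{Z}/2^j\eta(1)[2]\oplus\mathbb{Z}/2^j\eta$ --- is stated as a plan but never executed; as you say yourself, this is where the work is, and without it the ``routine check'' cannot be performed. Second, your dichotomy ``for $i\notin\{j+1,k+1\}$ the formula is trivial'' versus ``for $i\in\{j+1,k+1\}$ substitute'' glosses over the asymmetry when $j<k$. At the page $i=k+1>j+1$, the left-hand side $\beta_{k+1}(x\times y)$ vanishes identically because the Bockstein on the tensor product is, via the splitting, the Bockstein of $\mathbb{Z}/2^j\eta$ twice and hence zero past page $j+1$ by Proposition~\ref{higher}; but the right-hand side contains the a priori nonzero term $x\times\beta_{k+1}(u_k)=x\times\rho^k v_k$. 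One has to argue that this term lands in a bidegree $(p,q)$ with $p=q+2$ and $q>j+1$ where $E_{k+1}^{p,q}(\mathbb{Z}/2^j\eta\otimes\mathbb{Z}/2^k\eta)$ vanishes, so that the identity $0=0$ holds. This is not ``an elementary identity in $\mathbb{Z}/2[\rho]$'' but a bidegree-vanishing argument; the paper handles it by the inductive description of $E_i^{p,q}$ of the tensor product on later pages, and you should make it explicit as well. With these two points filled in, the proof would go through along the lines you sketch.
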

\begin{proof}
We may suppose \(j,k\geq 1\). We claim that
\[E_{i}^{p,q}(\mathbb{Z}/2^j\eta\otimes\mathbb{Z}/2^k\eta)=\begin{cases}\mathbb{Z}/2\cdot\rho^qu\times u&p=q\\\mathbb{Z}/2\cdot\rho^{q-1}(u\times v\oplus v\times u)&p=q+1\\\mathbb{Z}/2\cdot\rho^{q-2}v\times v&p=q+2\\0&\textrm{else}\end{cases},2\leq i\leq j+1,k+1.\]
The dimensions are correct by Proposition \ref{wittpart} and Proposition \ref{higher}.

Suppose \(i=2\). The Leibniz formula has been established. We have
\[H^{p,q}_M(\mathbb{Z}/2^j\eta\otimes\mathbb{Z}/2^k\eta,\mathbb{Z}/2)=h^{p,q}u\times u\oplus h^{p-2,q-1}(u\times v\oplus v\times u)\oplus h^{p-4,q-2}v\times v\]
by K\"unneth formula in \(DM(k,\mathbb{Z}/2)\). By Proposition \ref{higher}, the \(Sq^2=0\) modulo \(\tau\) on the group above, for every \(j,k\geq 1\). So the claim follows. So does the case \(i\leq j+1\) since \(\beta_i=0,i\leq j,k\).

If \(i<j+1\leq k+1\), the Leibniz formula holds since the Bocksteins vanish.

If \(i=j+1\leq k+1\), the map
\[-\times v:E_{j+1}^{p-2,q-1}(\mathbb{Z}/2^j\eta)\longrightarrow E_{j+1}^{p,q}(\mathbb{Z}/2^j\eta\otimes\mathbb{Z}/2^k\eta)\]
is induced by the map \(\mathbb{Z}/2^j\eta\otimes(\mathbb{Z}/2^k\eta\xrightarrow{\partial}\mathbb{Z}(1)[2])\). So we have
\[\beta_{j+1}(u\times v)=\beta_{j+1}\circ(Id\times\partial)^*(u)=(Id\times\partial)^*\circ\beta_{j+1}(u)=\rho^j(Id\times\partial)^*(v)=\rho^jv\times v,\]
similarly for \(v\times u\). Denote by \(p:\mathbb{Z}\longrightarrow\mathbb{Z}/2^j\eta\) the quotient map. It induces a map
\[(Id\otimes p)^*:E^{*,*}_{j+1}(\mathbb{Z}/2^j\eta\otimes\mathbb{Z}/2^k\eta)\longrightarrow E^{*,*}_{j+1}(\mathbb{Z}/2^k\eta)\]
such that \((Id\otimes p)^*(x\times u)=x,(Id\otimes p)^*(x\times v)=0\), similarly for \(p\otimes Id\). We have
\[(Id\otimes p)^*\circ\beta_{j+1}(u\times u)=\beta_{j+1}\circ(Id\otimes p)^*(u\times u)=\beta_{j+1}(u)=\rho^jv,\]
so does \(p\otimes Id\). This shows that
\[\beta_{j+1}(u\times u)=\rho^j(u\times v+v\times u).\]

If \(j\leq k,i>j+1\), we have by induction
\[E_{i}^{p,q}(\mathbb{Z}/2^j\eta\otimes\mathbb{Z}/2^k\eta)=\begin{cases}\mathbb{Z}/2\cdot\rho^{q-1}(u\times v\oplus v\times u)&p=q+1,q\leq j\\\mathbb{Z}/2\cdot\rho^{q-1}\frac{u\times v\oplus v\times u}{u\times v+v\times u}&p=q+1,q>j\\\mathbb{Z}/2\cdot\rho^{q-2}v\times v&p=q+2,q\leq j+1\\0&\textrm{else}\end{cases}\]
\[\beta_i(u\times v)=\beta_i\circ(Id\times\partial)^*(u)=(Id\times\partial)^*\circ\beta_i(u)=0\]
\[\beta_i(v\times u)=\beta_i\circ(\partial\times Id)^*(u)=(\partial\times Id)^*\circ\beta_i(u)=0,\]
which concludes the proof.
\end{proof}
\begin{proposition}\label{leibniz}
Let \(A, B\) be Tate MW-motives and \(x\in E(\textrm{W})^{*,*}_i(A), y\in E(\textrm{W})^{*,*}_i(B)\). The exterior product on \(E(\textrm{W})^{*,*}_i(A)\) is defined and we have
\[\beta_i(x\times y)=x\times\beta_i(y)+\beta_i(x)\times y.\]
\end{proposition}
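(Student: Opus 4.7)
The plan is to reduce the general Leibniz rule to the special case already handled by Proposition \ref{leibniz1}, using naturality of the exterior product and of the Bockstein differentials together with the approximation of arbitrary Tate MW-motives by direct sums of standard blocks provided by Proposition \ref{bounded}(2). The case \(i=1\) is already established in the discussion preceding Proposition \ref{leibniz1} via the Cartan formula, so I focus on \(i\geq 2\).

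First, I apply Proposition \ref{bounded}(2) to \(A\) and \(B\) to produce maps \(f\colon A\to A'\) and \(g\colon B\to B'\), where \(A'\) and \(B'\) are direct sums of standard blocks \(\mathbb{Z}(l)[2l]\) and \(\mathbb{Z}/j\eta(l)[2l]\), inducing isomorphisms on the \(E_i\)-pages for \(i\geq 2\). Writing \(j=2^ts\) with \(s\) odd, Proposition \ref{wittpart}(3) decomposes \(\mathbb{Z}/j\eta=\mathbb{Z}/2^t\eta\oplus\mathbb{Z}/\mathbf{s}\); since Proposition \ref{wittpart}(4) gives \(\mathbb{Z}/\mathbf{s}\otimes\mathbb{Z}/\eta=0\), the summands of odd type contribute trivially to the spectral sequence and may be discarded. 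Naturality of \(\times\) and \(\beta_i\) under \(f\otimes g\) reduces the Leibniz formula for \((A,B)\) to the corresponding statement for \((A',B')\), and then bilinearity of \(\times\) together with compatibility of both operations with Tate twists reduces the problem to pairs of untwisted blocks from \(\{\mathbb{Z},\,\mathbb{Z}/2^m\eta\}\).

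When both blocks are of the form \(\mathbb{Z}/2^m\eta\), the statement is precisely Proposition \ref{leibniz1}. For the remaining cases, I observe that the description of the \(E_2\)-page in the proof of Proposition \ref{bounded}(2), combined with the vanishing of higher Nisnevich cohomology of \(\mathbf{W}\) over \(\mathrm{Spec}(k)\), forces \(E^{*,*}_i(\mathbb{Z})\) to be concentrated on the diagonal \(p=q\geq 0\) and generated by the powers \(\rho^q\); this in turn makes \(\beta_i\) vanish on \(E^{*,*}_i(\mathbb{Z})\) for bidegree reasons, so the identity to prove reduces to \(\beta_i(x\times y)=x\times\beta_i(y)\). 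To verify it for \(A=\mathbb{Z}\) and \(B=\mathbb{Z}/2^k\eta\), I fix \(j\geq i-1\) and consider the canonical quotient map \(\psi\colon\mathbb{Z}\to\mathbb{Z}/2^j\eta\); the pullback \(\psi^*\colon E^{*,*}_i(\mathbb{Z}/2^j\eta)\to E^{*,*}_i(\mathbb{Z})\) sends \(u\) to \(1\) (and \(v\) to \(0\) by bidegree), as is used already in the proof of Proposition \ref{leibniz1}, so every \(x\in E^{*,*}_i(\mathbb{Z})\) admits a lift \(\tilde{x}\in E^{*,*}_i(\mathbb{Z}/2^j\eta)\). Applying Proposition \ref{leibniz1} to the pair \((\tilde{x},y)\) and pulling back along \(\psi\otimes\mathrm{id}_B\) using naturality of \(\times\) and \(\beta_i\) then yields the desired formula; the symmetric case \(B=\mathbb{Z}\) is analogous.

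The main obstacle is the interaction between the pure \(\mathbb{Z}\) factors and the \(\mathbb{Z}/2^m\eta\) factors, since Proposition \ref{leibniz1} does not directly cover this mixed situation. The lifting trick through a sufficiently large \(\mathbb{Z}/2^j\eta\)—valid because \(\psi^*\) is surjective on the diagonal, where all of \(E^{*,*}_i(\mathbb{Z})\) lives—combined with the vanishing of \(\beta_i\) on \(E^{*,*}_i(\mathbb{Z})\), is the key technical ingredient that resolves it.
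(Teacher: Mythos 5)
Your proof follows the same overall route as the paper: reduce via Proposition \ref{bounded}(2) and naturality of $\times$ and $\beta_i$ to direct sums of standard blocks, then invoke Proposition \ref{leibniz1}. You go further than the paper's terse proof by explicitly treating the mixed case where one block is $\mathbb{Z}(n)[2n]$ rather than $\mathbb{Z}/2^j\eta(n)[2n]$ (a case Proposition \ref{leibniz1} does not literally cover), handling it via the bidegree vanishing of $\beta_i$ on $E_i^{*,*}(\mathbb{Z})$ together with the lift of $\rho^q$ to $\rho^q\cdot u$ through the quotient $\mathbb{Z}\to\mathbb{Z}/2^j\eta$ with $j\geq i-1$; this is a sound and welcome addition that makes the argument more complete than the published one.
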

\begin{proof}
By Proposition \ref{bounded}, there is a map
\[A\xrightarrow{f}\oplus_i\mathbb{Z}(i)[2i]\oplus\oplus_{i,j}\mathbb{Z}/2^j\eta(i)[2i]\]
such that \(H^*(f,\textbf{W})\) and \(E_i^{*,*}(f),i\geq 2\) is an isomorphism. So does \(f\otimes B\) for any Tate \(B\). Then the statement follows by Proposition \ref{leibniz1}.
\end{proof}
\begin{proposition}\label{bockstein compare}
Suppose that \(A,B\) are Tate MW-motives.
\begin{enumerate}
\item There is a Cartesian square
\[
	\xymatrix
	{
		\mathrm{H}^{*,*}_{\mathrm{MW}}(A,\mathbb{Z})\ar[r]\ar[d]	&\mathrm{H}^{*,*}_{\mathrm{M}}(A,\mathbb{Z})\ar[d]\\
		\mathrm{H}^{*,*}_{\mathrm{W}}(A,\mathbb{Z})\ar[r]				&\mathrm{H}^{*,*}_{\mathrm{M}}(A,\mathbb{Z}/2).
	}
\]
\item Let \(\rho=[-1]\in\mathbf{K}_1^{\mathrm{MW}}(k), i\geq 2\). For every \(a\in\mathbb{Z}\), \(\oplus_qE_i^{q+a,q}(A)\) is a graded \(\mathbb{Z}/2[\rho]\)-module. So,  \(E_i(A)^a:=\oplus E^{*+a,*}_i(A)\) can be regarded as a \(\mathbb{Z}/2[\rho]\)-module chain complex. We have
\[E_2(A)\otimes_{\mathbb{Z}/2[\rho]}E_2(B)=E_2(A\otimes B)\]
as complexes and
\[E_i(A)\otimes^L_{\mathbb{Z}/2[\rho]}E_i(B)=E_i(A\otimes B)\]
in \(D(\mathbb{Z}/2[\rho]\textrm{-mod})\) for every \(i\geq 2\).
%\item For every \(i\geq 1\), we have
%\[\varinjlim_{n\to+\infty}E^{p+n,q+n}_{i+1}(A)_{\widetilde{\mathrm{DM}}(k,\mathbb{Z})}=E^{p-q}_i(A)_{\eta}\]
%with compatible differential maps.
\end{enumerate}
\end{proposition}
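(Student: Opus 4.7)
The plan is to reduce both statements to explicit calculations on the elementary Tate summands, using the comparison map of Proposition \ref{bounded}(2).

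For (1), I would start from the homotopy Cartesian square of effective spectra
\[
\xymatrix{\mathrm{H}\widetilde{\mathbb{Z}} \ar[r] \ar[d] & \mathrm{H}_\mu\mathbb{Z} \ar[d] \\ \mathrm{H}_\mathrm{W}\mathbb{Z} \ar[r] & \mathrm{H}_\mu\mathbb{Z}/2}
\]
obtained by taking effective covers of Morel's Cartesian square of homotopy modules $\mathbf{K}^{\mathrm{MW}}_* \simeq \mathbf{K}^{\mathrm{M}}_* \times_{\mathbf{K}^{\mathrm{M}}_*/2} \mathbf{K}^{\mathrm{W}}_*$. Applying $[A,-(q)[p]]$ yields a Mayer--Vietoris-type long exact sequence, so the desired Cartesian square of abelian groups is equivalent to the vanishing of its connecting homomorphism. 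Using Proposition \ref{bounded}(2), I would reduce this vanishing to the elementary Tate summands $\mathbb{Z}(i)[2i]$ and $\mathbb{Z}/2^j\eta(i)[2i]$: the case $A=\mathbb{Z}(i)[2i]$ is essentially the point case treated in \cite{HW}, while for $A=\mathbb{Z}/2^j\eta(i)[2i]$ one checks the vanishing directly from the defining distinguished triangle, combining Proposition \ref{leta} with the Witt-side computation of Proposition \ref{higher}.

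For (2), the Leibniz rule of Proposition \ref{leibniz} furnishes, for each $i\geq 2$, a natural chain map
\[
m_i\colon E_i(A)\otimes_{\mathbb{Z}/2[\rho]} E_i(B)\longrightarrow E_i(A\otimes B).
\]
For $i=2$, I would show $m_2$ is an isomorphism of complexes by applying Proposition \ref{bounded}(2) to both factors and reducing to the elementary pairings $\mathbb{Z}/2^j\eta(a)[2a] \otimes \mathbb{Z}/2^k\eta(b)[2b]$, for which the required identification is exactly that of Proposition \ref{leibniz1} combined with Proposition \ref{wittpart}(5). For the derived statement at higher pages, I would argue inductively on $i$: since $\mathbb{Z}/2[\rho]$ is a polynomial ring over a field, hence a PID, the derived tensor admits a Künneth short exact sequence with a single $\mathrm{Tor}^1$ term. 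Taking cohomology with respect to $\beta_i$ on both sides (a chain map by Leibniz) and comparing via this Künneth sequence identifies the resulting map on cohomology with $m_{i+1}$, closing the induction.

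The main obstacle, in my view, is this final inductive step in (2): one must carefully track how the Künneth $\mathrm{Tor}^1$-contributions interact with the Bockstein differentials to produce a genuine quasi-isomorphism at the next page rather than only an isomorphism of associated gradeds. Once this bookkeeping is under control, part (1) is comparatively routine, since there everything reduces to a finite list of fully computable elementary cases.
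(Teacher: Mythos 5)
Your approach differs from the paper on both parts, and there are genuine gaps.

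For part (1), the paper does not reduce to elementary summands at all. It argues directly: writing the fibre sequence as a triangle
\(\mathrm{H}\widetilde{\mathbb{Z}}\xrightarrow{(\alpha,\gamma)}\H_{\mathrm{W}}\mathbb{Z}\oplus\H_{\mu}\mathbb{Z}\to\H_{\mu}\mathbb{Z}/2\)
with boundary $\partial$, it notes that for $x\in\H^{*,*}_{\mathrm{M}}(A,\mathbb{Z}/2)$ one has $\gamma(\partial(x))=0$, so by Proposition \ref{bockstein} the class $\partial(x)$ lies in $\mathrm{Im}(\eta)\cap\ker(\eta^N)$ for some $N$; the condition $\alpha(\partial(x))=0$ then shows its image in $E_2$ vanishes, hence $\partial(x)\in\mathrm{Im}(\eta^2)\cap\ker(\eta^N)$, and iterating forces $\partial(x)=0$. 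Your reduction via Proposition \ref{bounded}(2) has a real gap: the comparison map $f\colon A\to\oplus\mathbb{Z}(i)[2i]\oplus\oplus\mathbb{Z}/j\eta(i)[2i]$ is only known to induce isomorphisms on $\H^{*}(-,\mathbf{W})$ and on the Bockstein pages $E_i$ for $i\geq 2$; it is \emph{not} asserted (and need not be true) that it induces isomorphisms, or even surjections, on $\H^{*,*}_{\mathrm{M}}(-,\mathbb{Z}/2)$, $\H^{*,*}_{\mathrm{M}}(-,\mathbb{Z})$, or $\H^{*,*}_{\mathrm{MW}}(-,\mathbb{Z})$, which are exactly the four corners of the square and the source of the connecting map. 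Naturality then gives only $\partial_A\circ f^*=f^*\circ\partial_D=0$, i.e.\ vanishing of $\partial_A$ on $\mathrm{Im}(f^*)$, and without surjectivity of $f^*$ you cannot conclude. (If one already knew Theorem \ref{tate}, i.e.\ that $A$ is literally a direct sum of the elementary pieces, your reduction would be fine; but that theorem is proved later and does not logically precede this proposition.)

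For part (2), the paper proves an \emph{abstract} isomorphism: it uses Proposition \ref{bounded}(2) (legitimate here, since the relevant invariants are $E_i$ with $i\geq2$, which that map does preserve) to reduce to $A,B\in\{\mathbb{Z},\mathbb{Z}/2^j\eta\}$, writes $E_i(\mathbb{Z})=\mathbb{Z}/2[\rho]$, $E_i(\mathbb{Z}/2^j\eta)=S$ for $i<j+1$ and $=S_j$ for $i\geq j+1$ (two-term complexes $\mathbb{Z}/2[\rho]\xrightarrow{0}\mathbb{Z}/2[\rho]$ and $\mathbb{Z}/2[\rho]\xrightarrow{\rho^j}\mathbb{Z}/2[\rho]$), and then simply checks $S\otimes S$, $S\otimes S_j$, $S_j\otimes S_j$ match $E_i(A\otimes B)$ computed from Proposition \ref{wittpart}(5) and Proposition \ref{higher}. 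It never claims the natural Leibniz pairing is the identification, so it avoids the issue you single out. Your plan to exhibit the quasi-isomorphism via the Leibniz map $m_i$ and close an induction over $i$ using Künneth over the PID $\mathbb{Z}/2[\rho]$ runs into exactly the obstacle you name: passing from the $i$-th page to the $(i{+}1)$-st page is cohomology with respect to $\beta_i$, and the Künneth short exact sequence carries a $\mathrm{Tor}^1$ term that does not automatically identify $H(E_i(A)\otimes^L E_i(B),\,\beta_i\otimes1+1\otimes\beta_i)$ with $E_{i+1}(A)\otimes^L E_{i+1}(B)$. In the paper's explicit model, this $\mathrm{Tor}^1$ contribution is precisely what one sees in the quasi-isomorphism $S_j\otimes S_j\simeq S_j\otimes S$, and making your inductive bookkeeping rigorous would require reproducing essentially that computation anyway. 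So your approach to (2) is a legitimate alternative in spirit, but the key step you flag is a genuine unproven claim, and without it your argument does not close.
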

\begin{proof}
\begin{enumerate}
\item We have the distinguished triangle
\[\mathrm{H}\widetilde{\mathbb{Z}}\xrightarrow{\alpha,\gamma}\H_{\mathrm{W}}\mathbb{Z}\oplus \H_{\mu}\mathbb{Z}\longrightarrow \H_{\mu}\mathbb{Z}/2\xrightarrow{\partial[1]}\]
where the \(\partial\) is given by the composite
\[\H_{\mu}\mathbb{Z}/2[-1]\xrightarrow{\beta} \H_{\mu}\mathbb{Z}\xrightarrow{h}\H\widetilde{\mathbb{Z}}.\]
Suppose that \(x\in \H^{*,*}_{\mathrm{M}}(A,\mathbb{Z}/2)\). By \(\gamma(\partial(x))=0\) and Proposition \ref{bockstein}, we have
\[\partial(x)\in \mathrm{Im}(\eta)\cap \ker(\eta^i)\]
for some \(i\) large enough. It is in \(\mathrm{Im}(\eta^2)\cap \ker(\eta^i)\) if and only if its class in \(E_2^{*,*}(A)\) vanishes. The latter follows from \textit{loc. cit.} and \(\alpha(\partial(x))=0\). Iterating this procedure we see that \(\partial(x)=0\), which concludes the proof.
\item We construct an abstract isomorphism. It suffices to consider the case \(A,B=\mathbb{Z},\mathbb{Z}/2^j\eta\) by Proposition \ref{bounded}. Denote by \(S_j\) the complex
\[0\longrightarrow\mathbb{Z}/2[\rho]\xrightarrow{\rho^j}\mathbb{Z}/2[\rho]\longrightarrow 0\]
and by \(S\) the complex
\[0\longrightarrow\mathbb{Z}/2[\rho]\xrightarrow{0}\mathbb{Z}/2[\rho]\longrightarrow 0,\]
both concentrated on degree \(0,1\). We have equalities
\[S\otimes S=S\oplus S[1], S\otimes S_j=S_j\oplus S_j[1], S_j\otimes S_j=S_j\otimes S\]
where the last one is given by the quasi-isomorphism
\[
	\xymatrix
	{
		\mathbb{Z}/2[\rho]\ar[r]^-{(\rho^j,\rho^j)}\ar@{=}[d]	&\mathbb{Z}/2[\rho]\oplus\mathbb{Z}/2[\rho]\ar[r]^-{\rho^j+\rho^j}\ar[d]_{f}	&\mathbb{Z}/2[\rho]\ar@{=}[d]\\
		\mathbb{Z}/2[\rho]\ar[r]^-{(\rho^j,0)}							&\mathbb{Z}/2[\rho]\oplus\mathbb{Z}/2[\rho]\ar[r]^-{0+\rho^j}							&\mathbb{Z}/2[\rho]
	}
\]
where \(f(x,y)=(x,x+y)\). For every \(i\geq 2,j>0\), we have
\[E_i(\mathbb{Z})=\mathbb{Z}/2[\rho]\]
\[E_i(\mathbb{Z}/2^j\eta)=\begin{cases}S&\textrm{if }i<j+1\\S_j&\textrm{if }i\geq j+1\end{cases}\]
in \(D(\mathbb{Z}/2[\rho]\textrm{-mod})\) by Proposition \ref{higher} and \cite[Proposition 2.13]{Y2}. Suppose \(A=\mathbb{Z}/2^a\eta\), \(B=\mathbb{Z}/2^b\eta\) and \(0<a\leq b\). Then
\[A\otimes B=\mathbb{Z}/2^a\eta(1)[2]\oplus\mathbb{Z}/2^a\eta\]
by Proposition \ref{wittpart}. So
\[E_i(A\otimes B)=\begin{cases}S\oplus S[1]&i<a+1.\\S_a\oplus S_a[1]&i\geq a+1.\end{cases}\]
We have
\[LHS=\begin{cases}S\otimes S&i<a+1\\S_a\otimes S&a+1\leq i<b+1\\S_a\otimes S_a&i\geq b+1\end{cases}=RHS.\]
So we conclude the proof.
\end{enumerate}
\end{proof}
\begin{lemma}\label{truncated}
Suppose that \(A\) is Tate and that \(2\leq i\leq j+1\). We have an exact sequence
\[0\longrightarrow E^{*-2,*-1}_i(A)\longrightarrow E^{*,*}_i(A/2^j\eta)\longrightarrow E^{*,*}_i(A)\longrightarrow 0.\]
If we denote by \(u\in E^{0,0}_i(\mathbb{Z}/2^j\eta), v\in E^{2,1}_i(\mathbb{Z}/2^j\eta)\) the nonzero elements, we obtain
\[
E^{*,*}_i(A/2^j\eta)=E^{*,*}_i(A)\times u\oplus E^{*-2,*-1}_i(A)\times v
\]
and there is a long exact sequence
\[\ldots\xrightarrow{\rho^j}E_{j+2}^{*-2,*-1}(A)\longrightarrow E_{j+2}^{*,*}(A/2^j\eta)\longrightarrow E_{j+2}^{*,*}(A)\xrightarrow{\rho^j}E_{j+2}^{*+j,*+j}(A).\]
\end{lemma}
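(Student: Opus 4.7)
The plan is to establish the splitting of $E^{*,*}_i(A/2^j\eta)$ by induction on $i$, using the K\"unneth formula at the base case and the Leibniz rule to propagate through pages, and then to derive the long exact sequence on page $j+2$ from a short exact sequence of complexes on page $j+1$.

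At the base case $i=2$, Proposition~\ref{higher} identifies $E_2(\mathbb{Z}/2^j\eta)$ as a free graded $\mathbb{Z}/2[\rho]$-module on $u \in E^{0,0}_2$ and $v \in E^{2,1}_2$ with trivial differential. Since Proposition~\ref{bockstein compare}(2) provides an equality of complexes $E_2(A/2^j\eta) = E_2(A) \otimes_{\mathbb{Z}/2[\rho]} E_2(\mathbb{Z}/2^j\eta)$, one reads off, accounting for the bidegree of $v$,
\[
E^{*,*}_2(A/2^j\eta) = E^{*,*}_2(A)\cdot u \;\oplus\; E^{*-2,*-1}_2(A)\cdot v.
\]
For the inductive step with $2 \le i < j+1$, Proposition~\ref{higher} gives $\beta_i(u) = \beta_i(v) = 0$, so the Leibniz rule (Proposition~\ref{leibniz}) makes the differential on $E_i(A/2^j\eta)$ split as $\beta_i(A) \oplus \beta_i(A)$ along the decomposition. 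Taking cohomology preserves the splitting, advancing the induction; iterating delivers the split short exact sequence claimed in the lemma for all $2 \le i \le j+1$.

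At page $j+1$, Proposition~\ref{higher} now supplies $\beta_{j+1}(u) = \rho^j v$ and $\beta_{j+1}(v) = 0$, so Leibniz yields
\[
\beta_{j+1}(x\cdot u) = \beta_{j+1}(x)\cdot u + \rho^j x \cdot v, \qquad \beta_{j+1}(x\cdot v) = \beta_{j+1}(x)\cdot v,
\]
exhibiting $E_{j+1}(A) \cdot v$ as a subcomplex of $E_{j+1}(A/2^j\eta)$ with quotient $E_{j+1}(A)\cdot u$. The resulting short exact sequence of $\mathbb{Z}/2[\rho]$-chain complexes
\[
0 \to E_{j+1}(A)\cdot v \to E_{j+1}(A/2^j\eta) \to E_{j+1}(A)\cdot u \to 0
\]
produces, upon passing to cohomology, exactly the long exact sequence on page $j+2$ claimed in the lemma. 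The connecting homomorphism is induced by the off-diagonal $\rho^j$ term: a cycle $x \in E^{p,q}_{j+1}(A)$ lifts to $x\cdot u$, whose image $\beta_{j+1}(x\cdot u) = \rho^j x \cdot v$ lies in the subcomplex at bidegree $(p+j+2,q+j+1)$, matching the target $E^{p+j,q+j}_{j+2}(A)$ of the rightmost $\rho^j$ in the LES. The main care required throughout is the bidegree bookkeeping around the $(2,1)$-shift of $v$, which is what produces both the $(-2,-1)$ shift in the splitting and the $(+j,+j)$ shift in the connecting map; otherwise the argument is a direct assembly of Leibniz with the explicit $\beta_i$-action on $E_i(\mathbb{Z}/2^j\eta)$ supplied by Proposition~\ref{higher}.
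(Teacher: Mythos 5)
Your proposal is correct and arrives at the same conclusion, but it takes a genuinely different route to the first two assertions of the lemma. Where the paper establishes the short exact sequence by comparing the distinguished triangles for $A/2^j\eta$ and $A/2^{j-1}\eta$ and running a double induction on $j$ and on the page $i$, you instead seed the entire argument at page $2$ with the K\"unneth formula of Proposition~\ref{bockstein compare}(2) applied to $A\otimes\mathbb{Z}/2^j\eta$, and then propagate the splitting to higher pages purely by the Leibniz rule together with Proposition~\ref{higher}'s vanishing of $\beta_i(u),\beta_i(v)$ for $i<j+1$. This is arguably cleaner: it replaces the paper's diagram-chasing and double induction with a single induction on $i$, and it makes the external-product structure visible from the start, which is exactly what is needed later. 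For the final long exact sequence the two arguments coincide in substance: the paper computes $\beta^{j+1}$ on the decomposed $E_{j+1}(A/2^j\eta)$ via Leibniz and then applies the Snake lemma, while you package the same off-diagonal $\rho^j$ term as a short exact sequence of $\mathbb{Z}/2[\rho]$-chain complexes and take its cohomology long exact sequence; these are the same computation.

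One point deserves a word of care. At the base case you read the equality $E_2(A/2^j\eta)=E_2(A)\otimes_{\mathbb{Z}/2[\rho]}E_2(\mathbb{Z}/2^j\eta)$ from Proposition~\ref{bockstein compare}(2) and then immediately identify the two summands with $E^{*,*}_2(A)\times u$ and $E^{*-2,*-1}_2(A)\times v$ (external products), which is what you need in order to invoke Leibniz in the inductive step. The proof of Proposition~\ref{bockstein compare}(2) constructs an \emph{abstract} isomorphism, so strictly speaking you should verify that this abstract identification is realized by the exterior product. This does hold — the exterior product at $E_1$ is the K\"unneth isomorphism in $\mathrm{DM}(k,\mathbb{Z}/2)$, it is compatible with the differentials by Proposition~\ref{leibniz}, and the projection $E_2(A/2^j\eta)\to E_2(A)$ induced by $A\to A/2^j\eta$ retracts $x\mapsto x\times u$ because $u$ pulls back to the unit — but the step should be flagged rather than folded silently into the citation. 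Once that is noted, the argument is complete and correct.
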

\begin{proof}
The second statement directly follows from the first one. For the first statement, which we prove by induction on \(j\), we have a diagram between distinguished triangles
\[
	\xymatrix
	{
		A\ar[r]\ar@{=}[d]	& A/2^j\eta\ar[r]\ar[d]	&A(1)[2]\ar[r]\ar[d]_-2	&A[1]\ar@{=}[d] \\
		A\ar[r]\ar[r]			&A/2^{j-1}\eta\ar[r]		&A(1)[2]\ar[r]				& A[1]
	}
\]
 If \(i=2\), we have a commutative diagram with exact rows
\[
	\xymatrix
	{
		E^{*,*}_2(A)\ar[r]				&E^{*+2,*+1}_2(A/2^j\eta)\ar[r]	&E^{*+2,*+1}_2(A)\ar[r]							&E^{*+2,*+1}_2(A)\\
		E^{*,*}_2(A)\ar[r]\ar[u]^0	&0\ar[r]\ar[u]						&E^{*+2,*+1}_2(A)\ar@{=}[r]\ar@{=}[u]	&E^{*+2,*+1}_2(A),\ar[u]^0
	}
\]
so the statement holds. For general \(j\), we prove the result by induction on \(i\). We have a commutative diagram with exact rows
\[
	\xymatrix
	{
		E^{*,*}_i(A)\ar[r]				&E^{*+2,*+1}_i(A/2^j\eta)\ar[r]					&E^{*+2,*+1}_i(A)\ar[r]					&E^{*+i,*+i-1}_i(A)\\
		E^{*,*}_i(A)\ar[u]^0\ar[r]	&E^{*+2,*+1}_i(A/2^{j-1}\eta)\ar[r]\ar[u]	&E^{*+2,*+1}_i(A)\ar[r]\ar@{=}[u]	&E^{*+i,*+i-1}_i(A)\ar[u]^0
	}
\]
by taking cohomologies on the statement for \(i-1\), which gives the statement for \(i\).

For the last statement, suppose \(2\leq i\leq j+1\). Let us consider the map
\[E_i^{*,*}(A)\oplus E_i^{*+2,*+1}(A)\xrightarrow{\beta^i}E_i^{*+i+1,*+i}(A)\oplus E_i^{*+i+3,*+i+1}(A)\]
by the decomposition above. We have
\begin{align*}&\beta^i(x\times v+y\times u)\\=&\beta^i(x)\times v+\beta^i(y)\times u+x\times\beta^i(v)+y\times\beta^i(u)\\=&\begin{cases}\beta^i(x)\times v+\beta^i(y)\times u&i<j+1\\\beta^i(x)\times v+\beta^i(y)\times u+\rho^j\cdot y\times v&i=j+1\end{cases}\end{align*}
by Proposition \ref{higher} and Proposition \ref{leibniz}. Then the statement follows from applying Snake lemma to the exact sequence before on \(j+1^{th}\)-page.
\end{proof}
Denote by \(Z_j(A), B_j(A)\subseteq E_1(A)\) the cycles and boundaries of \(j\)-th page of the Bockstein spectral sequence of \(A\) and by \(\beta^{j+1}:Z_j(A)/B_j(A)\longrightarrow Z_j(A)/B_j(A)\) the higher Bockstein. If \(A\) is Tate, \(Z_1(A)^{2n,n}=\ker(Sq^2\circ\pi)(A)^{2n,n}\).
\begin{proposition}\label{highermw}
Suppose that \(A\) is Tate and that \(j>0\). There is a surjection
\[[A,\mathbb{Z}/2^j\eta(n)[2n]]_{\widetilde{\mathrm{DM}}(k,\mathbb{Z})}\longrightarrow V(A,j)^n\times_{E^{2n+2,n+1}_{j+2}(A/2^j\eta)}\mathrm{H}^n(A,\mathbf{W}/2^j)\]
where
\[V(A,j)^n=\{(x,y)\in Z_j(A)^{2n,n}\oplus Z_{j+1}(A)^{2n+2,n+1}|\beta^{j+1}(x)=\rho^j\cdot y\}.\]
\end{proposition}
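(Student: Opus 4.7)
The plan is to construct the map $\phi$, verify it factors through the fiber product, and establish surjectivity by twisting the Cartesian square of Proposition \ref{bockstein compare}(1). Given $f\colon A\to\mathbb{Z}/2^j\eta(n)[2n]$, the Witt component $w\in\mathrm{H}^n(A,\mathbf{W}/2^j)$ is obtained by composing with the natural map $\mathbb{Z}/2^j\eta\to\mathrm{H}_{\mathrm{W}}\mathbb{Z}/2^j$, which exists because $\eta$ becomes invertible on $\mathrm{H}_{\mathrm{W}}\mathbb{Z}$, giving $\mathrm{H}_{\mathrm{W}}\mathbb{Z}\wedge\mathbb{Z}/2^j\eta\simeq\mathrm{H}_{\mathrm{W}}\mathbb{Z}/2^j$. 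The boundary $\partial f\in\mathrm{H}^{2n+2,n+1}_{\mathrm{MW}}(A)$ arising from the defining triangle of $\mathbb{Z}/2^j\eta$ is $2^j\eta$-torsion, and its image under the map $D\to E_1$ of the exact couple is automatically a permanent cycle (since $k\circ j=0$), yielding $y\in Z_{j+1}^{2n+2,n+1}(A)$. For $x$, I apply the octahedron to the factorization $2^j\eta=\eta\circ 2^j$, producing a distinguished triangle $\mathbb{Z}/2^j(1)[1]\to\mathbb{Z}/2^j\eta\xrightarrow{q}\mathbb{Z}/\eta\to\mathbb{Z}/2^j(1)[2]$ together with the compatibility relation $k\circ q=2^j\cdot\partial$. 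Setting $\bar f:=q\circ f\in E_1^{2n,n}(A)$ gives $k(\bar f)=2^j\partial f$, and I take $x$ to be the class of $\bar f$ after verifying it lies in $Z_j^{2n,n}(A)$.

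The verification $\bar f\in Z_j^{2n,n}(A)$ uses that $k(\bar f)=2^j\partial f$ is simultaneously $2^j$-divisible and $\eta$-annihilated (because $\eta\cdot 2^j\partial f=2^j\eta\partial f=0$); combining this with the bounded $\eta$-torsion of $A$ from Proposition \ref{bounded}(1) and the explicit $\mathbb{Z}/2[\rho]$-module structure of $E_i(A)$ from Corollary \ref{dim} forces $k(\bar f)\in\eta^j D$. The compatibility $\beta^{j+1}(x)=\rho^j y$ then falls out of the definition of the higher Bockstein, $\beta^{j+1}([e])=[j(d)]$ with $k(e)=\eta^j(d)$, applied to $e=\bar f$ and the lift $d$ of $2^{-j}k(\bar f)$; the identification of $\rho$ with the differential from Corollary \ref{dim} repackages the output as $\rho^j y$. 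The matching of $(x,y)$ with $w$ in $E_{j+2}^{2n+2,n+1}(A/2^j\eta)$ follows from Lemma \ref{truncated}, whose long exact sequence relates this $E_{j+2}$-group both to $V(A,j)^n$-data (via the splittings $E_i(A/2^j\eta)=E_i(A)\times u\oplus E_i(A)[-2,-1]\times v$ on pages $\leq j+1$) and to the Witt side via Proposition \ref{bockstein compare}(1).

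For surjectivity, I twist the Cartesian square of Proposition \ref{bockstein compare}(1) by $\mathbb{Z}/2^j\eta$-coefficients, obtaining a Cartesian description of $[A,\mathbb{Z}/2^j\eta(n)[2n]]$ as the fiber product of $\mathrm{H}^{2n,n}_{\mathrm{M}}(A,\mathbb{Z}/2^j)$ and $\mathrm{H}^n(A,\mathbf{W}/2^j)$ over $\mathrm{H}^{2n,n}_{\mathrm{M}}(A,\mathbb{Z}/2)$. Given $(x,y,w)$ in the target fiber product, the relation $\beta^{j+1}(x)=\rho^j y$ encodes precisely the ability to lift the motivic mod $2$ class represented by $x$ to a motivic mod $2^j$ class (iterated higher Bocksteins detect $2^\ell$-divisibility), while the $E_{j+2}$-matching condition ensures compatibility modulo $2$ with $w$; the twisted Cartesian square then produces the desired $f$. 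The main technical obstacle is establishing $\bar f\in Z_j^{2n,n}(A)$: translating the $2^j$-divisibility and $\eta$-annihilation of $k(\bar f)$ into genuine $\eta^j$-divisibility requires a careful spectral-sequence argument exploiting both Corollary \ref{dim} and the bounded $\eta$-torsion of Tate MW-motives.
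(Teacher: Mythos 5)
The paper's proof takes a cleaner route that you did not find: using the strong duality of $\mathbb{Z}/2^j\eta$ (from \cite[Proposition 1.2]{L1}), it first rewrites the source as $\widetilde{\mathrm{CH}}^{n+1}(A/2^j\eta)$, and then the \emph{entire} fiber-product surjection drops out of the formal exact-couple sequence \eqref{eqn:exactcouple} and Proposition~\ref{1}(1) applied to the Bockstein exact couple of $A/2^j\eta$, with $\ker(k)$ playing the role of the permanent cycles $K = Z_{j+1}(A/2^j\eta)^{2n+2,n+1}$ and $\overline{D}$ the Witt-cohomology image. What remains is a purely algebraic identification $K = V(A,j)^n$ via the K\"unneth splitting $E_i(A/2^j\eta) = E_i(A)\times u \oplus E_i(A)[-2,-1]\times v$ from Lemma~\ref{truncated} and the Leibniz rule. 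By working instead with the exact couple of $A$ itself and constructing $(x,y,w)$ from $f$ piecewise, you have to reconstruct by hand what the exact couple of $A/2^j\eta$ gives for free.

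Two steps in your argument have genuine gaps. First, the ``twisted Cartesian square'': smashing the defining triangle $\mathrm{H}\widetilde{\mathbb{Z}}\to\mathrm{H}_{\mathrm{W}}\mathbb{Z}\oplus\mathrm{H}_\mu\mathbb{Z}\to\mathrm{H}_\mu\mathbb{Z}/2$ with $\mathbb{Z}/2^j\eta$ does not produce the square you claim. Since $\gamma^*(\mathbf{p})=1$ for odd $p$ and since $\eta$ acts as zero on motivic cohomology, $\mathrm{H}_\mu\mathbb{Z}\wedge\mathbb{Z}/2^j\eta\simeq\mathrm{H}_\mu\mathbb{Z}\oplus\mathrm{H}_\mu\mathbb{Z}(1)[2]$ and $\mathrm{H}_\mu\mathbb{Z}/2\wedge\mathbb{Z}/2^j\eta\simeq\mathrm{H}_\mu\mathbb{Z}/2\oplus\mathrm{H}_\mu\mathbb{Z}/2(1)[2]$, not $\mathrm{H}_\mu\mathbb{Z}/2^j$ and $\mathrm{H}_\mu\mathbb{Z}/2$. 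So the twisted square has corners $\mathrm{CH}^n\oplus\mathrm{CH}^{n+1}$ and $\mathrm{Ch}^n\oplus\mathrm{Ch}^{n+1}$, not the mod $2^j$ and mod $2$ motivic groups you assert; moreover Cartesian-ness of a square of spectra is not automatically preserved by smashing, so even if the corners were right a further argument would be needed. Second, the verification $\bar f\in Z_j^{2n,n}(A)$ --- which you yourself flag as ``the main technical obstacle'' --- is left at the level of a heuristic: ``$2^j$-divisible and $\eta$-annihilated forces $\eta^j$-divisibility'' does not follow from Corollary~\ref{dim} and bounded torsion alone without an actual diagram chase, and in the $\eta$-Bockstein it is $\eta$-divisibility, not $2$-divisibility, that the higher cycles encode. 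The paper avoids both problems because, once one passes to $\widetilde{\mathrm{CH}}^{n+1}(A/2^j\eta)$, the membership in $Z_{j+1}(A/2^j\eta)$ is automatic for permanent cycles, and the translation into the condition $\beta^{j+1}(x)=\rho^j\cdot y$ is precisely the Leibniz computation in Lemma~\ref{truncated}.
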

\begin{proof}
First, \(\mathbb{Z}/2^j\eta\) has a strong dual \(\mathbb{Z}/2^j\eta(-1)[-2]\) by \cite[Proposition 1.2]{L1}. So we have an isomorphism
\[[A,\mathbb{Z}/2^j\eta(n)[2n]]_{\widetilde{\mathrm{DM}}(k,\mathbb{Z})}=\widetilde{\mathrm{CH}}^{n+1}(A/2^j\eta).\]
By Corollary \ref{identification}, it suffices to prove that the kernel \(K\) of the Bockstein
\[E_1^{2n+2,n+1}(A/2^j\eta)\longrightarrow\widetilde{\mathrm{CH}}^{n+2}(A/2^j\eta)\]
is equal to \(V(A,j)^n\).

We know that by \textit{loc. cit.}
\[K=Z_{\infty}(A/2^j\eta)^{2n+2,n+1}=Z_{j+1}(A/2^j\eta)^{2n+2,n+1}.\]
We have a K\"unneth decomposition
\[\mathrm{CH}^{n+1}(A/2^j\eta)=\mathrm{CH}^n(A)\oplus \mathrm{CH}^{n+1}(A).\]
By the proof in Proposition \ref{higher}, \(Sq^2\) acts trivially on \(\mathbb{Z}/2^j\eta\) if \(j>0\). So we have
\[Z_1(A/2^j\eta)^{2n+2,n+1}=Z_1(A)^{2n,n}\oplus Z_1(A)^{2n+2,n+1}.\]

Suppose that we have \(K\subseteq Z_i(A)^{2n,n}\oplus Z_i(A)^{2n+2,n+1}\) for some \(1\leq i<j\) and \((x,y)\in K\). Then by the computation in Lemma \ref{truncated},
\[\beta^{i+1}(x,y)=(\beta^{i+1}(x),\beta^{i+1}(y))=0,\]
so \(K\subseteq Z_{j}(A)^{2n,n}\oplus Z_{j}(A)^{2n+2,n+1}\) by induction. On the other hand, the equation
\[\beta^{j+1}(x,y)=(\beta^{j+1}(x)+\rho^j\cdot y,\beta^{j+1}(y))=0\]
concludes the proof.
\end{proof}
\begin{theorem}\label{tate}
Every Tate MW-motive \(A\) is a direct sum of \(\mathbb{Z}(i)[2i]\), \(\mathbb{Z}/2^r\eta(i)[2i]\) and \(\mathbb{Z}/\mathbf{p}^r[i]\) where \(p\) is an odd prime.
\end{theorem}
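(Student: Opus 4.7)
The plan is to build an explicit decomposition map of $A$ onto a direct sum of the three block types and then verify it is an isomorphism via Proposition \ref{faithful}. I proceed in two stages: first splitting off the odd-torsion summands, then constructing the $2$-primary blocks by reading off the required multiplicities from Witt cohomology and lifting via Proposition \ref{highermw}.

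For the first stage, Proposition \ref{wittpart}(2), combined with parts (3)--(4), decomposes $L(A)$ in $\widetilde{\mathrm{DM}}(k,\mathbb{Z})[\eta^{-1}]$ as an orthogonal direct sum $L(P)\oplus L(A_0)$, where $P=\bigoplus_\alpha\mathbb{Z}/\mathbf{p}_\alpha^{r_\alpha}[i_\alpha]$ collects the odd-prime-power contributions. Since each $\mathbb{Z}/\mathbf{p}^r$ is $\eta$-local, Proposition \ref{wittpart}(1) identifies $[P,A]_{\widetilde{\mathrm{DM}}}=[L(P),L(A)]$ and dually $[A,P]=[L(A),L(P)]$, and the idempotent cutting out $L(P)$ in $L(A)$ thus pulls back to an idempotent on $A$. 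This yields a splitting $A\simeq P\oplus A_0$ with $A_0$ Tate and $L(A_0)$ purely $2$-primary.

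For the second stage, write $\mathrm{H}^n(A_0,\mathbf{W})=\mathbb{Z}^{a_n}\oplus\bigoplus_{r\ge 1}(\mathbb{Z}/2^r)^{x_{n,r}}$ and set
\[
T=\bigoplus_n\mathbb{Z}(n)[2n]^{\oplus a_n}\oplus\bigoplus_{n,r}\mathbb{Z}/2^r\eta(n)[2n]^{\oplus x_{n,r}}.
\]
I construct $\Phi:A_0\to T$ componentwise. For each free Witt generator in degree $n$: since $\gamma^*(A_0)$ is Tate, $\mathrm{CH}^n(A_0)$ is free and surjects mod $2$ onto $\mathrm{H}^{2n,n}_{\mathrm{M}}(A_0,\mathbb{Z}/2)$, so one chooses a Chow class whose mod-$2$ reduction agrees with that of the Witt generator and invokes the Cartesian square of Proposition \ref{bockstein compare}(1) to package the pair as an MW-cohomology class $A_0\to\mathbb{Z}(n)[2n]$. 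For each $\mathbb{Z}/2^r$-generator: Proposition \ref{highermw} reduces the lift to exhibiting an element of $V(A_0,r)^n\times_{E_{r+2}(A_0/2^r\eta)}\mathrm{H}^n(A_0,\mathbf{W}/2^r)$, and the higher-Bockstein computations in Proposition \ref{higher} and Corollary \ref{dim} show that a class of order exactly $2^r$ carries the required $\beta^{r+1}$-compatibility, producing a map $A_0\to\mathbb{Z}/2^r\eta(n)[2n]$.

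By construction $\mathrm{H}^*(\Phi,\mathbf{W})$ is an isomorphism; combining Corollary \ref{dim} with the Cartesian square of Proposition \ref{bockstein compare}(1) then forces $\mathrm{CH}^*(\Phi)$ to be an isomorphism as well, and Proposition \ref{faithful} concludes. The main obstacle I anticipate is the coherent choice of lifts $(x,y)\in V(A_0,r)^n$ across all $n$ and $r$, so that $\Phi$ realizes each Chow generator exactly once without creating spurious identifications among the $x_{n,r}$-many $\mathbb{Z}/2^r\eta$-summands. I expect this bookkeeping will require an induction on $r$ based on the long exact sequence of Lemma \ref{truncated}, together with the Leibniz rule of Proposition \ref{leibniz}, to guarantee that the higher Bocksteins of the chosen Witt lifts align exactly with the chosen Chow data and with the Lemma \ref{linear}-style splittings of $\mathrm{CH}^*(A_0)$ used implicitly in the Cartesian-square construction.
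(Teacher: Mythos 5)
Your overall strategy -- peel off the odd-primary blocks using the $\eta$-locality of $\mathbb{Z}/\mathbf{p}^r$, then produce the $2$-primary blocks degree by degree and invoke Proposition~\ref{faithful} -- is aligned with the paper's proof, and your handling of the odd-torsion stage is essentially equivalent (the paper constructs a split injection from $\bigoplus\mathbb{Z}/\mathbf{p}^s[i]$ rather than an idempotent, but both routes rely on Proposition~\ref{wittpart}(1) in the same way).

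There is, however, a genuine gap in your second stage. Your target
\[
T=\bigoplus_n\mathbb{Z}(n)[2n]^{\oplus a_n}\oplus\bigoplus_{n,r\ge 1}\mathbb{Z}/2^r\eta(n)[2n]^{\oplus x_{n,r}}
\]
is read off entirely from $\mathrm{H}^*(A_0,\mathbf{W})$, but this \emph{cannot} detect the $\mathbb{Z}/\eta(n)[2n]$ summands: from the triangle $\mathbb{Z}(1)[1]\xrightarrow{\eta}\mathbb{Z}\to\mathbb{Z}/\eta$ and the fact that $\eta$ is invertible on Witt cohomology, one gets $\mathrm{H}^*(\mathbb{Z}/\eta(n)[2n],\mathbf{W})=0$. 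Yet $\mathbb{Z}/\eta(n)[2n]$ contributes rank $2$ to the Chow groups (in degrees $n$ and $n+1$), so if $A_0$ has such a summand your $\Phi\colon A_0\to T$ can induce an isomorphism on $\mathrm{H}^*(-,\mathbf{W})$ while $\mathrm{CH}^*(\Phi)$ fails even to have matching ranks. Consequently your claim that the Cartesian square of Proposition~\ref{bockstein compare}(1) together with Corollary~\ref{dim} ``forces $\mathrm{CH}^*(\Phi)$ to be an isomorphism'' is false, and Proposition~\ref{faithful} does not apply. This is precisely the part of the argument the paper handles first and most delicately: before touching the Bockstein spectral sequence at all, it applies Lemma~\ref{linear} to $\mathrm{CH}^*(A)$ and the $Sq^2$-differential to build the classes $t_{n,i}=(y_{n,i},w_{n+1,i})\in E_1^{2n,n}(A)$ that account for all the $\mathbb{Z}/\eta(n)[2n]$ summands (cf.\ the argument of Theorem~\ref{tate0}), and only the \emph{remaining} basis elements $\{x_{n,k}\}$ feed into the inductive $V(A,t-1)^n$/Proposition~\ref{highermw} construction you sketch. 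Without that preliminary Chow-level splitting, there is also no way to guarantee the ``coherent choice of lifts'' you worry about in your final paragraph, since the $\{x_{n,k}\}$ have to be chosen compatibly with the $y_{n,i}$ and $w_{n,j}$ from the outset. To repair the proposal you must augment $T$ by $\bigoplus\mathbb{Z}/\eta(n)[2n]$ summands whose multiplicities are determined from $\mathrm{CH}^*(A_0)$ and the $Sq^2$-action, and interleave that choice with the construction of the $\mathbb{Z}$ and $\mathbb{Z}/2^r\eta$ classes as the paper does.
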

\begin{proof}
Suppose \(A\) has up to \(\eta^r\)-torsion. We can find a map
\[f:\oplus_{p,s,i}\mathbb{Z}/\mathbf{p}^s[i]\longrightarrow A\]
where \(p\) is an odd prime and \(\mathrm{H}^{*}(C(f),\mathbf{W})\) has no odd torsion. Then \(f\) is injective by Proposition \ref{wittpart}. We replace \(A\) by \(C(f)\).

By the proof of Proposition \ref{bockstein}, we have
\[\frac{Ker(Sq^2\circ\pi)^{2*,*}(A)}{\pi^{-1}(\mathrm{Im}(Sq^2))^{2*,*}(A)}=E^{2*,*}(A).\]
We have a commutative diagram for \(A\)
\[
	\xymatrix
	{
		\pi^{-1}(\mathrm{Im}(Sq^2))^{2n,n}\ar[r]^-{\varphi}\ar[dr]_{\pi}	&\mathrm{CH}^{n-1}/\ker(Sq^2\circ\pi)^{2n-2,n-1}\ar[d]_{Sq^2\circ\pi}^{\cong}\\
																						&\mathrm{Im}(Sq^2)^{2n,n}.
	}
\]
Now \(\mathrm{CH}^*/\ker(Sq^2\circ\pi)^{2*,*}=\mathrm{Im}(Sq^2)^{2*,*}\subseteq \mathrm{Ch}^*\) is \(2\)-torsion so there is a basis \(\{y_{n,i},z_{n,j},x_{n,k}\}\) of \(\mathrm{CH}^n(A)\) such that
\[\sum_i\alpha_iy_{n,i}+\sum_j\beta_jz_{n,j}+\sum_k\lambda_kx_{n,k}\in \ker(Sq^2\circ\pi)^{2n,n}\]
if and only if \(\alpha_i\) are even for every \(i\), that \(\{z_{n,j}\}\subseteq\pi^{-1}(\mathrm{Im}(Sq^2))^{2n,n}\) and that the image of \(\{x_{n,k}\}\) freely generates \(E^{2n,n}_2(A)\). Then \(\pi^{-1}(\mathrm{Im}(Sq^2))^{2n,n}/(\oplus_i2\mathbb{Z}y_{n,i}+\oplus_k2\mathbb{Z}x_{n,k})=\oplus_j\mathbb{Z}z_{n,j}\) is a free \(\mathbb{Z}\)-module, the rank of which is the same that of \(\mathrm{CH}^{n-1}/\ker(Sq^2\circ\pi)^{2n-2,n-1}\) as a \(\mathbb{Z}/2\)-module. So there is a basis \(\{w_{n,j}\}\) of \(\pi^{-1}(\mathrm{Im}(Sq^2))^{2n,n}/(\oplus_i2\mathbb{Z}y_{n,i}+\oplus_k2\mathbb{Z}x_{n,k})\) satisfying
\[\varphi(w_{n,j})=\pi(y_{n-1,j}).\]
Then we obtain classes
\[t_{n,i}=(y_{n,i},w_{n+1,i})\in E_1^{2n,n}(A)\]
such that
\[\ker(Sq^2\circ\pi)^{2n,n}(A)=(\oplus_k\mathbb{Z}x_{n,k})\oplus(\oplus_i2\mathbb{Z}y_{n,i})\oplus(\oplus_j\mathbb{Z}w_{n,j}).\]
Hence
\[\mathrm{CH}^n(A)=\left(\oplus_k\mathbb{Z}x_{n,k}\right)\oplus\left(\oplus_i\mathbb{Z}y_{n,i}\right)\oplus\left(\oplus_j\mathbb{Z}w_{n,j}\right).\]

In general, suppose we have \(\mathbb{Z}\)-linearly independent elements \(\{x_{n,k}\}\subseteq Z_{t-1}(A)^{2n,n},t\geq 2\), such that for fixed \(n\), their classes in \(E_t^{2n+*,n+*}(A)\) are \(\mathbb{Z}/2\)-linearly independent and generate the \(\mathbb{Z}/2[\rho]\)-free part of \(E_t^{2n+*,n+*}(A)\). Define \(K\) by the exact sequence
\[0\longrightarrow K\longrightarrow\oplus_k\mathbb{Z}\cdot x_{n,k}\xrightarrow{\beta^t}E_t^{2n+t+1,n+t}(A).\]
By Lemma \ref{linear}, there is a decomposition
\[\oplus_k\mathbb{Z}\cdot x_{n,k}=U_n\oplus V_n\]
such that
\[\begin{array}{cc}K=2U_n\oplus V_n,&U_n/2=Im(\beta^t)^{2n+t+1,n+t}\end{array}.\]
Since \(2(n-t)<2n-t-1,t\geq 2\), we have \(E_t^{2n-t-1,n-t}(A)=0\), thus
\[\ker(E_t^{2n,n}(A)\xrightarrow{\beta^t}E_t^{2n+t+1,n+t}(A))=E_{t+1}^{2n,n}(A).\]
By Corollary \ref{dim}, there are a basis \(\{u_{n,i}\}\) of \(U_n\) and \(\mathbb{Z}\)-linearly independent elements \(\{v_{n+1,i}\}\subseteq V_{n+1}\) whose images in \(E_{t+1}^{2n+2+*,n+1+*}(A)\) span its \(\mathbb{Z}/2[\rho]/\rho^{t-1}\) components and satisfy
\[\rho^{t-1}\cdot v_{n+1,i}=\beta^t(u_{n,i})\]
in \(E_t^{*,*}(A)\). By Lemma \ref{linear}, there is a decomposition
\[V_{n+1}=(\oplus_i\mathbb{Z}\cdot v_{n+1,i})\oplus W_{n+1}\]
such that the image of \(W_{n+1}\) in \(E_{t+1}^{2n+2+*,n+1+*}\) freely spans its \(\mathbb{Z}/2[\rho]\)-free part.

We can find \(f_{t,n,i}\in[A,\mathbb{Z}/2^{t-1}\eta(n)[2n]]_{MW}\), corresponding to \((u_{n,i},v_{n+1,i})\in V(A,t-1)^n\) and \(c_i\in \H^n(A,\textbf{W}/2^{t-1})\) as in Proposition \ref{highermw}, such that \(\beta(c_i)\) freely generate the \(\mathbb{Z}/2^{t-1}\) components of \(\H^{n+1}(A,\textbf{W})\). Let us explain this in detail. By Corollary \ref{dim}, we see that
\[\#\{\mathbb{Z}/2^{t-1}\textrm{-components in }\H^{n+1}(A,\textbf{W})\}=dim(U_n).\]
The generators in LHS come from Bockstein images of elements \(c\in \H^{n}(A,\textbf{W}/2^{t-1})\), such that there is no \(c'\in \H^n(A,\textbf{W}/2^{t'-1}),t'<t\) such that \(2^{t-t'}c'\) and \(c\) differs by an integral class. We have commutative diagrams
\[
	\xymatrix
	{
		\H^n(A,\textbf{W}/2^{t'-1})\ar[r]^{2^{t-t'}}\ar[d]	&\H^n(A,\textbf{W}/2^{t-1})\ar[d]\ar[r]	&\H^{n+1}(A,\textbf{W})\ar[d]\\
		E^{2n+2,n+1}_{t+1}(A/2^{t'-1}\eta)\ar[r]^0				&E^{2n+2,n+1}_{t+1}(A/2^{t-1}\eta)\ar[r]&E^{2n+2,n+1}_{t+1}(A)
	}
\]
since \(E^{2n+2,n+1}_{\infty}(A)\subseteq E^{2n+2,n+1}_{t+1}(A)\) by the disscussion above. So we see that \(c_i\) satisfies the conditions of \(c\) by the linear independence of \(\{v_{n+1,i}\}\).

We iterate the procedure by setting \(\{x_{n,k}\}\) to be a basis of \(W_n\) and \(t\) increasing by one, until \(t=r+1\). The remaining subgroup in \(\mathrm{CH}^n(A)\) has the same dimension as the \(\mathbb{Z}/2[\rho]\)-free part of \(E^{2n,n}_{\infty}(A)\), which is the rank of the free part of \(\H^n(A,\textbf{W})\) by Corollary \ref{dim}. So we obtain \(g_{n,j}\in\widetilde{\mathrm{CH}}^n(A)\) as above by Corollary \ref{identification}.

Finally, the map
\[A\xrightarrow{t_{n,i},f_{t,n,i},g_{n,j}}(\oplus_{n,i}\mathbb{Z}/\eta(n)[2n])\oplus(\oplus_{t,i,n}\mathbb{Z}/2^{t-1}\eta(n)[2n])\oplus(\oplus_{n,j}\mathbb{Z}(n)[2n])\]
is an isomorphism by Proposition \ref{faithful}.
\end{proof}
\begin{corollary}\label{degeneracy}
For any Tate MW-motive \(A\) and \(r\in\mathbb{N}\), the following statements are equivalent:
\begin{enumerate}
\item The ring \(\H^*(A,\mathbf{W})\) has up to \(2^r\)-torsion;
\item The MW-motive \(A\) has up to \(\eta^{r+1}\)-torsion;
\item The Bockstein spectral sequence degenerates at the \((r+2)\)nd-page.
\end{enumerate}
\end{corollary}
\begin{remark}
There exists a projective smooth scheme whose MW-motive is Tate and Witt cohomology has odd torsion. For example, let \(E=S^3(U)\) where \(U\) is the tautological bundle of \(Gr(2,4)\). We have
\[e(E)=3e(U)^2\]
by \cite[Theorem 8.1]{L}. By \cite[Theorem 5.14]{Y1}, we easily obtain that
\[\H^4(\mathbb{P}(E),\mathbf{W})=\mathbf{W}(k)/3=\mathbb{Z}/3.\]
\end{remark}
\section{Vector Bundles on \(\mathbb{HP}^1\)}
If \(X\) is a smooth affine scheme, we denote by \(\mathcal{V}_n(X)\) the set of isomorphism classes of rank \(n\) bundles on \(X\). Recall from \cite{AHW} that there is a bijection
\[[X_+,\mathrm{BGL}_n]_{\mathbb{A}^1}\cong\mathcal{V}_n(X)\]
where the left-hand side denotes the set of \textit{pointed} classes of maps in the pointed motivic homotopy category \(\mathcal{H}_{\bullet}(k)\). Instead of $\mathrm{BGL}_n$, one may consider $\mathrm{BSL}_n$ and obtain
\[
[X_+,\mathrm{BSL}_n]_{\mathbb{A}^1}\cong\mathcal{V}_n^o(X)
\]
where \(\mathcal{V}_n^o(X)\) denotes the set of isomorphism classes of oriented bundles (i.e. with trivial determinant) of rank \(n\) on \(X\) (e.g. \cite[Theorem 4.2]{AF1}).
Now, we have a fiber sequence
\[\mathrm{BSL}_n\longrightarrow \mathrm{BGL}_n\longrightarrow \mathrm{B}\mathbb{G}_m\]
which can be used to compute the first homotopy sheaves of \(\mathrm{BGL}_n\). Following \cite[Discussion before Proposition 6.3]{AF2}, we see that the map \(\mathrm{BGL}_n\longrightarrow \mathrm{B}\mathbb{G}_m\) induces an isomorphism on \(\pi_1^{\mathbb{A}^1}\) and that \(\mathrm{BSL}_n\) is simply connected. More precisely, we obtain the following result.
\begin{lemma}
We have
\[\begin{array}{cc}\pi_i^{\mathbb{A}^1}(\mathrm{BSL}_2)=\begin{cases}*&i=0,\\1&i=1,\\\mathbf{K}_2^{\mathrm{MW}}&i=2,\end{cases}&\pi_i^{\mathbb{A}^1}(\mathrm{BSL}_n)=\begin{cases}*&i=0,\\1&i=1,\\\mathbf{K}_2^{\mathrm{M}}&i=2,\end{cases} \text{ for $n\geq 3$.}\end{array}\]
\end{lemma}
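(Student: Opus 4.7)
The plan is to reduce everything to computing $\mathbb{A}^1$-homotopy sheaves of $\mathrm{SL}_n$ and then invoke Morel's theorems. The key input is the standard fact that for any smooth $\mathbb{A}^1$-connected linear algebraic group $G$, the canonical map $G\to \Omega^{\mathbb{A}^1}(\mathrm{B}G)$ is an $\mathbb{A}^1$-weak equivalence, so that
\[
\pi_i^{\mathbb{A}^1}(\mathrm{B}G)\cong \pi_{i-1}^{\mathbb{A}^1}(G)\qquad\text{for every } i\ge 1.
\]

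First I would verify that $\mathrm{SL}_n$ is $\mathbb{A}^1$-connected. This follows from the classical fact that $\mathrm{SL}_n$ is generated by the elementary one-parameter subgroups $E_{ij}\colon \mathbb{G}_a\hookrightarrow\mathrm{SL}_n$, each of which is $\mathbb{A}^1$-contractible, so every $k$-point of $\mathrm{SL}_n$ is $\mathbb{A}^1$-chain connected to the identity. Combined with the loop-space reduction above, this immediately gives $\pi_0^{\mathbb{A}^1}(\mathrm{BSL}_n)=\ast$ (the space is pointed and $\mathbb{A}^1$-connected) and $\pi_1^{\mathbb{A}^1}(\mathrm{BSL}_n)=\pi_0^{\mathbb{A}^1}(\mathrm{SL}_n)=1$.

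The substantive step is the identification of $\pi_2^{\mathbb{A}^1}(\mathrm{BSL}_n)=\pi_1^{\mathbb{A}^1}(\mathrm{SL}_n)$. For $n\ge 3$, one invokes Morel's unstable stability range, which gives an isomorphism $\pi_1^{\mathbb{A}^1}(\mathrm{SL}_n)\cong \pi_1^{\mathbb{A}^1}(\mathrm{SL}_\infty)$, together with the stable identification $\pi_1^{\mathbb{A}^1}(\mathrm{SL}_\infty)\cong\mathbf{K}_2^{\mathrm{M}}$ coming from a Matsumoto-style presentation; combined, these yield $\pi_1^{\mathbb{A}^1}(\mathrm{SL}_n)\cong\mathbf{K}_2^{\mathrm{M}}$. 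For $n=2$, the stability range fails and one needs instead Morel's more delicate unstable computation $\pi_1^{\mathbb{A}^1}(\mathrm{SL}_2)\cong\mathbf{K}_2^{\mathrm{MW}}$, in which the extra contribution above the motivic part is carried by the Hopf element $\eta$.

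The main obstacle is the $n=2$ case: the refinement from $\mathbf{K}_2^{\mathrm{M}}$ to $\mathbf{K}_2^{\mathrm{MW}}$ is a genuinely deep theorem of Morel that one would simply cite rather than reprove. All the other ingredients—the loop-space equivalence, the $\mathbb{A}^1$-connectedness of $\mathrm{SL}_n$, and the stable/unstable reduction for $n\ge 3$—are essentially formal once Morel's framework is available.
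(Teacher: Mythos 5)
Your proof is correct, and its hard input is the same as the paper's: Morel's identification of $\pi_1^{\mathbb{A}^1}(\mathrm{SL}_2)\cong\mathbf{K}_2^{\mathrm{MW}}$ and of $\pi_1^{\mathbb{A}^1}(\mathrm{SL}_n)\cong\mathbf{K}_2^{\mathrm{M}}$ in the stable range, fed through the loop-space equivalence $\pi_i^{\mathbb{A}^1}(\mathrm{B}G)\cong\pi_{i-1}^{\mathbb{A}^1}(G)$. There are two small differences in routing. For the low-degree part, you prove $\pi_0^{\mathbb{A}^1}(\mathrm{SL}_n)=\ast$ directly via the $\mathbb{A}^1$-contractible elementary one-parameter subgroups, whereas the paper reads off simply-connectedness of $\mathrm{BSL}_n$ from the fiber sequence $\mathrm{BSL}_n\to\mathrm{BGL}_n\to\mathrm{B}\mathbb{G}_m$ together with the fact that the second map is an isomorphism on $\pi_1^{\mathbb{A}^1}$; both are standard and equivalent. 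For $\pi_2^{\mathbb{A}^1}(\mathrm{BSL}_n)$ with $n\geq 3$, you invoke Morel's stability range plus the stable identification of $\pi_1^{\mathbb{A}^1}(\mathrm{SL}_\infty)$, while the paper works concretely with the fiber sequence $\mathrm{SL}_n\to\mathrm{SL}_{n+1}\to\mathbb{A}^{n+1}\setminus 0$ (whose high connectivity of the base gives the stable range explicitly) together with the base case from \cite[Corollary 2.4]{AF2}. These are two phrasings of the same argument, since the connectivity of $\mathbb{A}^{n+1}\setminus 0$ is precisely what underlies the stability theorem you cite. One minor caution: "$\pi_1^{\mathbb{A}^1}(\mathrm{SL}_\infty)\cong\mathbf{K}_2^{\mathrm{M}}$" is correct, but its justification is Morel's computation rather than a naive appeal to Matsumoto's theorem for fields, since the sheaf statement requires more; it would be cleaner to cite Morel's theorem on $\pi_1^{\mathbb{A}^1}(\mathrm{SL}_n)$ for $n\geq 3$ directly, which is effectively what the paper's reference to \cite{AF2} unwinds to.
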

\begin{proof}
For the \(n\geq 3\) part, we apply the fiber sequence
\[\mathrm{SL}_n\longrightarrow \mathrm{SL}_{n+1}\longrightarrow\mathbb{A}^{n+1}\setminus 0\]
and \cite[Corollary 2.4]{AF2}.
\end{proof}

One of the most convenient way to compute morphisms in the pointed homotopy category is the so-called Postnikov tower associated to a pointed space. For \(\mathrm{BSL}_2\), it consists of pointed spaces \(\mathrm{BSL}_2^{(i)}\) for any \(i\geq 0\) and the only relevant points for us are the following:
\begin{enumerate}
\item\(\mathrm{BSL}_2^{(i)}=*\) for \(i=0,1\).
\item\(\mathrm{BSL}_2^{(2)}=K(\mathbf{K}_2^{\mathrm{MW}},2)\), where the latter is the Eilenberg-Maclane spaces associated to the (strictly \(\mathbb{A}^1\)-invariant) sheaf \(\mathbf{K}_2^{\mathrm{MW}}\). It has the property that \([X_+,\mathrm{K}(\mathbf{K}_2^{\mathrm{MW}},2)]_{\mathcal{H}_{\bullet}(k)}\cong \H^2(X,\mathbf{K}_2^{\mathrm{MW}})=\widetilde{\mathrm{CH}}^2(X)\).
\item There are principal fiber sequences
\[\mathrm{BSL}_2^{(i+1)}\longrightarrow \mathrm{BSL}_2^{(i)}\xrightarrow{\kappa_i}K(\pi_{i+1}^{\mathbb{A}^1}(\mathrm{BSL}_2),i+2)\]
where \(\kappa_i\) is the so-called \(k\)-invariant.
\item There are morphisms \(p_i:\mathrm{BSL}_2\longrightarrow \mathrm{BSL}_2^{(i)}\) for any \(i\in\mathbb{N}\) such that the following diagram commutes
\[
	\xymatrix
	{
		\mathrm{BSL}_2\ar@{=}[d]\ar[r]^-{p_{i+1}}	&\mathrm{BSL}_2^{(i+1)}\ar[d]\\
		\mathrm{BSL}_2\ar[r]_-{p_{i}}				&\mathrm{BSL}_2^{(i)}
	}
\]
and \(\mathrm{BSL}_2\cong \mathrm{holim}_i\mathrm{BSL}_2^{(i)}\).
\item If \(n\geq 3\), the same results hold for \(\mathrm{BSL}_n\) except that \(\mathrm{BSL}_n^{(2)}=\mathrm{K}(\mathbf{K}_2^{M},2)\).
\end{enumerate}
\begin{theorem}\label{rk2}
For \(X=\mathbb{HP}^1\), the Euler class induces a bijection
\[e:\mathcal{V}_2^o(X)\cong\widetilde{\mathrm{CH}}^2(X)\cong\mathbf{GW}(k)\]
and the second Chern class induces a bijection
\[c_2:\mathcal{V}_n^o(X)\cong \mathrm{CH}^2(X)\cong\mathbb{Z},n\geq 3.\]
\end{theorem}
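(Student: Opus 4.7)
The plan is to ascend the Postnikov tower of $\mathrm{BSL}_n$ recorded in items $(1)$--$(5)$ before the statement. The canonical map
\[
p_2^*\colon \mathcal{V}_n^o(\mathbb{HP}^1)=[\mathbb{HP}^1_+,\mathrm{BSL}_n]_{\mathbb{A}^1}\longrightarrow [\mathbb{HP}^1_+,\mathrm{BSL}_n^{(2)}]_{\mathcal{H}_\bullet(k)}
\]
is, on the right-hand side, $\widetilde{\mathrm{CH}}^2(\mathbb{HP}^1)$ for $n=2$ (resp.\ $\mathrm{CH}^2(\mathbb{HP}^1)$ for $n\geq 3$), and by construction it is the Euler class (resp.\ the second Chern class). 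So everything reduces to (a) computing these two Chow-like groups and (b) proving that $p_2^*$ is bijective by a standard obstruction argument.

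For (a), I would exploit the fact (essentially from \cite{PW}) that $\mathbb{HP}^1$ admits a cell structure with a single even-dimensional top cell, giving a stable splitting $\tilde{M}(\mathbb{HP}^1)\simeq \mathbb{Z}\oplus\mathbb{Z}(2)[4]$ in $\widetilde{\mathrm{DM}}(k,\mathbb{Z})$. From this, $\widetilde{\mathrm{CH}}^2(\mathbb{HP}^1)=\widetilde{\mathrm{CH}}^2(k)\oplus\widetilde{\mathrm{CH}}^0(k)=\mathbf{GW}(k)$ and $\mathrm{CH}^2(\mathbb{HP}^1)=\mathrm{CH}^2(k)\oplus\mathrm{CH}^0(k)=\mathbb{Z}$. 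More generally, for any strictly $\mathbb{A}^1$-invariant sheaf $F$, this splitting gives a decomposition $\mathrm{H}^j(\mathbb{HP}^1,F)\cong \mathrm{H}^j(k,F)\oplus \mathrm{H}^{j-4}(k,F_{-2})$ where $F_{-2}$ is the double $\mathbb{G}_m$-contraction of $F$.

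For (b), I would argue by induction along the principal fiber sequences in item $(3)$ that each transition $[\mathbb{HP}^1_+,\mathrm{BSL}_n^{(i+1)}]\to[\mathbb{HP}^1_+,\mathrm{BSL}_n^{(i)}]$ is a bijection for $i\geq 2$. By standard obstruction theory for a principal $K(\pi_{i+1}^{\mathbb{A}^1}(\mathrm{BSL}_n),i+1)$-fibration, it suffices to show
\[
\mathrm{H}^{i+1}\!\bigl(\mathbb{HP}^1,\pi_{i+1}^{\mathbb{A}^1}(\mathrm{BSL}_n)\bigr)=0=\mathrm{H}^{i+2}\!\bigl(\mathbb{HP}^1,\pi_{i+1}^{\mathbb{A}^1}(\mathrm{BSL}_n)\bigr)
\]
for every $i\geq 2$. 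Using the splitting of step (a), this reduces to checking vanishing of $\mathrm{H}^{i+1}(k,F)$, $\mathrm{H}^{i+2}(k,F)$, $\mathrm{H}^{i-3}(k,F_{-2})$ and $\mathrm{H}^{i-2}(k,F_{-2})$ for $F=\pi_{i+1}^{\mathbb{A}^1}(\mathrm{BSL}_n)$. For $i\geq 3$ the base-field parts $\mathrm{H}^{\geq 4}(k,F)$ vanish because $k$ is Euclidean, hence of small cohomological dimension for the relevant Witt/Milnor-type sheaves, and the contracted parts reduce to degrees covered by the $i=2$ case.

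The main obstacle is therefore the critical case $i=2$, which demands concrete information about the sheaf $\pi_3^{\mathbb{A}^1}(\mathrm{BSL}_n)=\boldsymbol{\pi}_2^{\mathbb{A}^1}(\mathrm{SL}_n)$. For $n\geq 3$ this sheaf is known by the stable range and the cited results of Asok--Fasel to be related to $\mathbf{K}_3^{\mathrm{M}}/\mathbf{K}_3^{\mathrm{Q}}$-type sheaves whose cohomology on $\mathrm{Spec}(k)$ in degrees $3$ and $4$ can be shown to vanish over a Euclidean field; for $n=2$ one uses the explicit identification with a Milnor--Witt variant (of the sheaf $\mathbf{K}_3^{\mathrm{MW}}/\eta$-type) and a similar cohomological-dimension argument. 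Once the case $i=2$ is settled, the induction closes and $p_2^*$ is bijective, yielding the stated identification of $\mathcal V_n^o(\mathbb{HP}^1)$ with $\mathbf{GW}(k)$ or $\mathbb{Z}$ via the Euler class or $c_2$, respectively.
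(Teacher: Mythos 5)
Your overall strategy (Postnikov tower, obstruction theory via the principal fibrations, reduction to cohomology of $\mathbb{HP}^1$ with coefficients in the homotopy sheaves) is the same as the paper's, but there is an indexing error in step (a) that cascades into a spurious obstruction in step (b), and the attempted repair of that spurious obstruction is both unnecessary and not actually carried out.

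The formula $\mathrm{H}^j(\mathbb{HP}^1,F)\cong \mathrm{H}^j(k,F)\oplus\mathrm{H}^{j-4}(k,F_{-2})$ is wrong: the correct shift in Nisnevich cohomological degree is $2$, not $4$. The summand $\mathbb{Z}(2)[4]$ corresponds to $S^2_s\wedge\mathbb{G}_m^{\wedge 2}$; for a strictly $\mathbb{A}^1$-invariant sheaf the two $\mathbb{G}_m$-smash factors contract the coefficients without changing the Nisnevich degree, while the $S^2_s$ factor shifts the degree by $2$. Concretely, writing the cohomology as a bigraded theory, the splitting gives $\mathrm{H}^{p,q}(\mathbb{HP}^1)=\mathrm{H}^{p,q}(k)\oplus\mathrm{H}^{p-4,q-2}(k)$, and translating to sheaf cohomology yields $\mathrm{H}^j(\mathbb{HP}^1,F)\cong \mathrm{H}^j(k,F)\oplus\mathrm{H}^{j-2}(k,F_{-2})$. (The paper gets exactly this by using the \emph{unstable} equivalence $\mathbb{HP}^1\simeq(\mathbb{P}^1)^{\wedge 2}$ from \cite{ADF17} together with \cite[Lemma~4.5]{AF1}, which directly gives $[X_+,K(\mathbf F,j)]=0$ for $j\neq 0,2$, $=\mathbf F(k)$ for $j=0$, and $=\mathbf F_{-2}(k)$ for $j=2$; using only the stable motivic splitting as you do is a weaker input and requires more care to translate into the unstable statement one actually needs.)

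With the correct formula the ``hard case'' $i=2$ evaporates: for every $i\geq 2$ the obstruction groups $\mathrm{H}^{i+1}(\mathbb{HP}^1,F)$ and $\mathrm{H}^{i+2}(\mathbb{HP}^1,F)$ reduce to $\mathrm{H}^{i+1}(k,F)\oplus\mathrm{H}^{i-1}(k,F_{-2})$ and $\mathrm{H}^{i+2}(k,F)\oplus\mathrm{H}^{i}(k,F_{-2})$, and all four summands sit in strictly positive Nisnevich degree over the field $k$, hence vanish automatically---for \emph{any} strictly $\mathbb{A}^1$-invariant sheaf $F$ and over \emph{any} base field. You therefore do not need any information about $\pi_3^{\mathbb{A}^1}(\mathrm{BSL}_n)$, and the appeal to $k$ Euclidean having ``small cohomological dimension for the relevant Witt/Milnor-type sheaves'' is misplaced: Nisnevich cohomology of a field vanishes in positive degrees for trivial reasons, independently of $k$. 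Finally, you assert that the composite $p_2^*$ is the Euler class (resp.\ $c_2$) ``by construction'': this identification is not automatic, and the paper supplies the missing step by observing $\mathrm{SL}_2\simeq\mathbb{A}^2\setminus 0$, producing the fiber sequence $\mathbb{A}^2\setminus 0\to *\to \mathrm{BSL}_2$, and invoking \cite[Theorem~1]{AF3}.
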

\begin{proof}
We use the Postnikov tower as above, evaluating it at \(X_+\). In view of the first property, we find \([X_+,\mathrm{BSL}_2^{(i)}]_{\mathcal{H}_{\bullet}(k)}=*\) for \(i=0,1\), while
\[
[X_+,\mathrm{BSL}_2^{(2)}]_{\mathcal{H}_{\bullet}(k)}=[X_+,K(\mathbf{K}_2^{MW},2)]_{\mathcal{H}_{\bullet}(k)}\cong \H^2(X,\mathbf{K}_2^{MW})=\widetilde{\mathrm{CH}}^2(X).
\]
We may now use the fiber sequences
\[
\mathrm{BSL}_2^{(i+1)}\longrightarrow \mathrm{BSL}_2^{(i)}\xrightarrow{\kappa_i}K(\pi_{i+1}^{\mathbb{A}^1}(\mathrm{BSL}_2),i+2)
\]
for any \(i\geq 2\) to obtain an exact sequence
\[\xymatrix
{[X_+,K(\pi_{i+1}^{\mathbb{A}^1}(\mathrm{BSL}_2),i+1)]_{\mathcal{H}_{\bullet}(k)}\ar[r]&[X_+,\mathrm{BSL}_2^{(i+1)}]_{\mathcal{H}_{\bullet}(k)}\ar[r]^{\varphi}&[X_+,\mathrm{BSL}_2^{(i)}]_{\mathcal{H}_{\bullet}(k)}\ar[d]\\&&[X_+,K(\pi_{i+1}^{\mathbb{A}^1}(\mathrm{BSL}_2),i+2)]_{\mathcal{H}_{\bullet}(k)}}.
\]
Now, we know that \(X\cong(\mathbb{P}^1)^{\wedge2}\) by \cite{ADF17} and we obtain for any strictly \(\mathbb{A}^1\)-invariant sheaf \(\mathbf{F}\) and any \(j\in\mathbb{N}\)
\[
[X_+,K(\mathbf{F},j)]_{\mathcal{H}_{\bullet}(k)}\cong[(\mathbb{P}^1)^{\wedge2}_+,K(\mathbf{F},j)]_{\mathcal{H}_{\bullet}(k)}=\begin{cases}0&j\neq 0,2,\\ \mathbf{F}(k) & j=0, \\\mathbf{F}_{-2}(k)&j=2,\end{cases}
\]
in view of \cite[Lemma 4.5]{AF1}. It follows easily that \(\varphi\) is a bijection for any \(i\geq 2\) and that \(\H^2(X,\mathbf{K}_2^{\mathrm{MW}})=(\mathbf{K}_2^{\mathrm{MW}})_{-2}(k)=\mathbf{GW}(k)\). Thus
\[
[X_+,\mathrm{BSL}_2]_{\mathcal{H}_{\bullet}(k)}\cong[X_+,\mathrm{BSL}_2^{(2)}]_{\mathcal{H}_{\bullet}(k)}\cong\widetilde{\mathrm{CH}}^2(X)\cong\mathbf{GW}(k)
\]
and it remains to prove that the bijection is given by the Euler class. For this, we observe that \(\mathrm{SL}_2\simeq \mathbb{A}^2\setminus 0\) and we consequently obtain a fiber sequence \[\mathbb{A}^2\setminus0\longrightarrow*\longrightarrow \mathrm{BSL}_2\]
which allows to deduce that the map we consider is the first obstruction for an oriented bundle of rank \(2\) to be trivial. One may use \cite[Theorem 1]{AF3} to conclude. The statement for \(n\geq 3\) follows in the same fashion as above.
\end{proof}

\begin{corollary}
The Witt-valued Euler class induces a surjection
\[e:\mathcal{V}_2^o(\mathbb{HP}^1)\longrightarrow\mathbf{W}(k).\]
In particular, if \(k\) is Euclidean and \(n\in\mathbb{N}\), there is a rank \(2\) oriented vector bundle \(E_n\) such that
\(e(E_n)=2^n\in\mathbf{W}(k).\)
Hence we have
\[\H^2(\mathbb{P}(E_n),\mathbf{W})=\mathbb{Z}/2^n.\]
\end{corollary}
To conclude, let's observe that the identification of \(\mathcal{V}_n^o(\mathbb{HP}^1)\) allows to completely classify rank \(n\) bundles on \(\mathbb{HP}^1\). For this, observe that the fiber sequence
\[\mathrm{BSL}_n\longrightarrow \mathrm{BGL}_n\longrightarrow \mathrm{B}\mathbb{G}_m\]
yields an exact sequence
\[
[\mathbb{HP}^1_+,\mathbb{G}_m]_{\mathcal{H}_{\bullet}(k)}\to[\mathbb{HP}^1_+,\mathrm{BSL}_n]_{\mathcal{H}_{\bullet}(k)}\to[\mathbb{HP}^1_+,\mathrm{BGL}_n]_{\mathcal{H}_{\bullet}(k)}\to[\mathbb{HP}^1_+,\mathrm{B}\mathbb{G}_m]_{\mathcal{H}_{\bullet}(k)}.
\]
As \(\mathrm{B}\mathbb{G}_m\cong K(\mathscr{O}^{\times},1)\), we obtain \([\mathbb{HP}^1_+,\mathrm{B}\mathbb{G}_m]_{\mathcal{H}_{\bullet}(k)}=*\) and thus an exact sequence of pointed sets
\[[\mathbb{HP}^1_+,\mathbb{G}_m]_{\mathcal{H}_{\bullet}(k)}\to\mathcal{V}_n^o(\mathbb{HP}^1)\to\mathcal{V}_n^o(\mathbb{HP}^1)\to*.\]
\begin{lemma}
We have \([\mathbb{HP}^1_+,\mathbb{G}_m]_{\mathcal{H}_{\bullet}(k)}=\mathscr{O}(\mathbb{HP}^1)^{\times}=k^{\times}\) and the action of \(k^{\times}\) on \(\mathcal{V}_2^o(\mathbb{HP}^1)\cong\mathbf{GW}(k)\) is given by
\[\alpha\cdot a=\langle\alpha\rangle\cdot a\]
for any \(\alpha\in k^{\times}\) and \(a\in\mathbf{GW}(k)\). A similar statement holds for \(\mathcal{V}_3^o(\mathbb{HP}^1),n\geq 3\).
\end{lemma}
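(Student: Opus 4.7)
The plan is to compute $[\mathbb{HP}^1_+,\mathbb{G}_m]_{\mathcal{H}_\bullet(k)}$ as a concrete group, interpret the connecting map of the determinant fibration geometrically, and finally identify the induced action on the Postnikov tower of $\mathrm{BSL}_n$. For the first step, recall that the sheaf of units $\mathscr{O}^\times$ is strictly $\mathbb{A}^1$-invariant, so $\mathbb{G}_m$ is already $\mathbb{A}^1$-local (represented by a discrete simplicial sheaf), and for any smooth scheme $X$ one has $[X_+,\mathbb{G}_m]_{\mathcal{H}_\bullet(k)}=\mathscr{O}(X)^\times$. A direct computation from the Panin--Walter model gives $\mathscr{O}(\mathbb{HP}^1)=k$, hence $\mathscr{O}(\mathbb{HP}^1)^\times=k^\times$.

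For the second step, looping the fiber sequence $\mathrm{BSL}_n\to\mathrm{BGL}_n\to\mathrm{B}\mathbb{G}_m$ yields a connecting map $\mathbb{G}_m\to\mathrm{BSL}_n$ whose induced operation on $\pi_0$ is precisely the action of $\mathscr{O}(X)^\times$ on $\mathcal{V}_n^o(X)$; under the translation between $\mathrm{SL}_n$-torsors and pairs $(E,\phi\colon\det E\xrightarrow{\sim}\mathscr{O})$, this action is $(E,\phi)\mapsto(E,\alpha\phi)$, i.e.\ rescaling of the trivialization. For the third step, by functoriality of Postnikov towers, the truncation $p_2\colon\mathrm{BSL}_n\to\mathrm{BSL}_n^{(2)}$ is $\mathbb{G}_m$-equivariant for the determinant action. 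When $n=2$, the induced action on $\mathbf{K}_2^{\mathrm{MW}}=\pi_2^{\mathbb{A}^1}(\mathrm{BSL}_2)$ is inherited, via $\mathrm{SL}_2\simeq\mathbb{A}^2\setminus 0$, from the action of $\mathbb{G}_m$ on $\mathbb{A}^2\setminus 0$ coming from the embedding $\alpha\mapsto\mathrm{diag}(\alpha,1)\in\mathrm{GL}_2$; under Morel's identification $\pi_1^{\mathbb{A}^1}(\mathbb{A}^2\setminus 0)=\mathbf{K}_2^{\mathrm{MW}}$, this is multiplication by $\langle\alpha\rangle\in\mathbf{GW}(k)=\mathbf{K}_0^{\mathrm{MW}}(k)$, the standard $\mathbf{GW}$-module structure on Morel's contracted homotopy sheaves. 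Evaluating on $\mathbb{HP}^1$ and using the identification $\widetilde{\mathrm{CH}}^2(\mathbb{HP}^1)=(\mathbf{K}_2^{\mathrm{MW}})_{-2}(k)=\mathbf{GW}(k)$ from the proof of Theorem \ref{rk2} yields $\alpha\cdot a=\langle\alpha\rangle\cdot a$. For $n\geq 3$ the relevant layer is $K(\mathbf{K}_2^{\mathrm{M}},2)$, and the $\mathbf{GW}$-module structure on $\mathbf{K}_2^{\mathrm{M}}$ factors through the rank map $\mathbf{GW}\to\mathbb{Z}$, which sends $\langle\alpha\rangle$ to $1$; hence $k^\times$ acts trivially on $\mathcal{V}_n^o(\mathbb{HP}^1)=\mathrm{CH}^2(\mathbb{HP}^1)=\mathbb{Z}$.

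The main obstacle is the equivariance assertion in the last step: one must verify that the isomorphism $\pi_2^{\mathbb{A}^1}(\mathrm{BSL}_2)=\mathbf{K}_2^{\mathrm{MW}}$ intertwines the $\mathbb{G}_m$-action coming from the determinant fibration with the intrinsic $\mathbf{GW}$-action $\alpha\mapsto\langle\alpha\rangle$ on Morel's contracted sheaves. Granted this compatibility, the formula $\alpha\cdot a=\langle\alpha\rangle\cdot a$ and its degeneration for $n\geq 3$ reduce to routine module-theoretic bookkeeping.
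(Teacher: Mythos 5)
Your proof is correct and follows essentially the same route as the paper, which disposes of the lemma by observing that $\mathbb{G}_m$ is $\mathbb{A}^1$-rigid (so $[X_+,\mathbb{G}_m]_{\mathcal{H}_\bullet(k)}=\mathscr{O}(X)^\times$), citing \cite[Theorem 3.1]{PW} for $\mathscr{O}(\mathbb{HP}^1)^\times=k^\times$, and citing \cite[\S 6]{AF2} for the action formula. Your Postnikov-tower argument simply unpacks that last citation; in particular, the compatibility you flag at the end — that the determinant-fibration monodromy on $\pi_2^{\mathbb{A}^1}(\mathrm{BSL}_2)=\mathbf{K}_2^{\mathrm{MW}}$ coincides with multiplication by $\langle\alpha\rangle$ via the conjugation action on $\mathrm{SL}_2\simeq\mathbb{A}^2\setminus 0$ and Morel's $\mathbf{GW}$-module structure on contracted sheaves — is precisely what is established in the cited section.
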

\begin{proof}
We know that \(\mathbb{G}_m\) is \(\mathbb{A}^1\)-rigid and thus that \([X_+,\mathbb{G}_m]_{\mathcal{H}_{\bullet}(k)}\cong\mathscr{O}(X)^{\times}\) for any \(X\in \mathrm{Sm}/k\). We know from \cite[Theorem 3.1]{PW} that  \(\mathscr{O}(\mathbb{HP}^1)^{\times}=k^{\times}\). For the second assertion, we may appeal to \cite[\S 6]{AF2}.
\end{proof}
\begin{corollary}\label{vb}
The Euler class induces a bijection
\[\mathcal{V}_2(\mathbb{HP}^1)\cong\mathbf{GW}(k)/k^{\times}\]
and the second Chern class induces a bijection
\[\mathcal{V}_n(\mathbb{HP}^1)\cong\mathbb{Z}, n\geq 3.\]
So \(E\in\mathcal{V}_2(\mathbb{HP}^1)\) satisfies that \(E\oplus O_{\mathbb{HP}^1}\) is free and \(E\) is not free if and only if
\[0\neq e(E)\in\mathbf{I}(k)\subseteq\widetilde{\mathrm{CH}}^2(\mathbb{HP}^1),\]
which is equivalent to \(\mathbb{P}(E)\) admitting a genuine \(\eta^2\)-torsion when \(k\) is Euclidean.
\end{corollary}
\section{Blow-ups with Even Codimensional Center}
We now continue the study of the MW-motive of blow-ups initiated in \cite{Y1}. In the sequel, we suppose that \(Z,X\in \mathrm{Sm}/k\) and that \(Z\subseteq X\) is closed of \emph{even} codimension \(n\).
\begin{definition}
We set
\[M_Z(X)_{\eta}:=C(\mathbb{Z}(X\setminus Z)\longrightarrow\mathbb{Z}(X)\longrightarrow\mathbb{Z}(X)/\eta)\]
in \(\widetilde{\mathrm{DM}}(k,\mathbb{Z})\).
\end{definition}
\begin{lemma}\label{aux}
We have
\[\footnotesize M_Z(X)_{\eta}=C\left(\mathbb{Z}(X)\longrightarrow \mathrm{Th}(\det(N_{Z/X}))(n-1)[2n-2]\xrightarrow{\eta}\mathrm{Th}(\det(N_{Z/X}))(n-2)[2n-3]\right)(1)[1].\]
\end{lemma}
\begin{proof}
This follows from the following commutative diagram in which the rows and columns are distinguished triangles
\[
	\xymatrix
	{
																&\mathbb{Z}(X\setminus Z)(-1)[-1]\ar@{=}[r]\ar[d]	&\mathbb{Z}(X\setminus Z)(-1)[-1]\ar[d]\\
		\mathbb{Z}(X)\ar[r]^{\eta}\ar@{=}[d]	&\mathbb{Z}(X)(-1)[-1]\ar[r]\ar[d]							&\mathbb{Z}(X)/\eta(-1)[-1]\ar[d]\\
		\mathbb{Z}(X)\ar[r]							&\mathrm{Th}(\det(N_{Z/X}))(n-2)[2n-3]\ar[r]						&M_Z(X)_{\eta}(-1)[-1].
	}
\]
\end{proof}
\begin{proposition}\label{blowup}
Suppose that $Z\subset X$ is of even codimension and is quasi-projective. We have
\[\mathbb{Z}(Bl_Z(X))=M_Z(X)_{\eta}(-1)[-2]\oplus\bigoplus_{i=1}^{\frac{n}{2}-1}\mathbb{Z}(Z)/\eta(2i-1)[4i-2].\]
\end{proposition}
\begin{proof}
We have closed immersions \(Bl_Z(X)\subseteq Bl_Z(X\times\mathbb{A}^1)\) and \(Z\times\mathbb{A}^1\subseteq Bl_Z(X\times\mathbb{A}^1)\) with empty intersection. There is an \(\mathbb{A}^1\)-bundle \(Bl_Z(X\times\mathbb{A}^1)\setminus(Z\times\mathbb{A}^1)\longrightarrow Bl_Z(X)\). By \cite[Theorem 5.20]{Y1}, we have a commutative diagram whose rows and columns are distinguished triangles
\[\footnotesize
	\xymatrix
	{
																&\mathbb{Z}(Z)/\eta(n-1)[2n-2]\ar[d]\ar@{=}[r]																	&\mathbb{Z}(Z)/\eta(n-1)[2n-2]\ar[d]_{\partial}\\
		\mathbb{Z}(Bl_Z(X))\ar[r]\ar@{=}[d]	&\mathbb{Z}(Bl_Z(X\times\mathbb{A}^1))\ar[r]\ar[d]														&\mathrm{Th}(\det(N_{Z/X}))(n-1)[2n-2]\ar[d]\\
		\mathbb{Z}(Bl_Z(X))\ar[r]					&\mathbb{Z}(X)\oplus\bigoplus_{i=1}^{\frac{n}{2}-1}\mathbb{Z}(Z)/\eta(2i-1)[4i-2]\ar[r]^-{(\varphi,\psi_i)}	&C.\\
	}
\]
By the proof of \textit{loc. cit.}, the map \(\partial\) is the composite
\[\footnotesize\mathbb{Z}(Z)/\eta(n-1)[2n-2]\longrightarrow\mathbb{Z}(\mathbb{P}(N_{Z/X}\oplus O_Z))\longrightarrow\mathbb{Z}(Bl_Z(X\times\mathbb{A}^1))\longrightarrow \mathrm{Th}(\det(N_{Z/X}))(n-1)[2n-2].\]
The composition of the last two arrows above is the Gysin map of the section \(Z\subseteq\mathbb{P}(N_{Z/X}\oplus O_Z)\) by the Cartesian diagram
\[
	\xymatrix
	{
		Z\ar[r]^0\ar[d]									&Z\times\mathbb{A}^1\ar[d]\\
		\mathbb{P}(N_{Z/X}\oplus O_Z)\ar[r]	&Bl_Z(X\times\mathbb{A}^1)
	}
\]
and the excess intersection formula (\cite[Theorem 3.2]{F}). This coincides with the composite
\[\footnotesize\mathbb{Z}(Z)/\eta(n-1)[2n-2]=\mathrm{Th}(\det(N_{Z/X}))/\eta(n-2)[2n-4]\longrightarrow \mathrm{Th}(\det(N_{Z/X}))(n-1)[2n-2]\]
by the proof in \cite[Theorem 5.14]{Y1}. So we see that
\[C=\mathrm{Th}(\det(N_{Z/X}))(n-2)[2n-3].\]
By (b) in \textit{loc. cit.}, we have \(\psi_i=0\). Now, \(\varphi\) is the composite
\[\mathbb{Z}(X)\longrightarrow \mathrm{Th}(\det(N_{Z/X}))(n-1)[2n-2]\xrightarrow{\eta}\mathrm{Th}(\det(N_{Z/X}))(n-2)[2n-3]\]
since the section \(X\subseteq Bl_Z(X\times\mathbb{A}^1)\) constantly equal to \(1\) induces an isomorphism
\[M_Z(X)=M_{Z\times\mathbb{A}^1}(Bl_Z(X\times\mathbb{A}^1))\]
by the proof of \cite[Theorem 2.23, \S 3]{MV}. We finally apply Lemma \ref{aux} to conclude.
\end{proof}
\begin{corollary}
If $Z\subset X$ is of even codimension $n$ and is quasi-projective, we have
\[\mathbb{Z}(Bl_Z(X))=\mathbb{Z}(X\setminus Z)\]
in \(\widetilde{\mathrm{DM}}(k,\mathbb{Z})[\eta^{-1}]\).
\end{corollary}

\begin{remark}
If \(n\) is odd, we have
\[\mathbb{Z}(Bl_Z(X))=\mathbb{Z}(X)\]
in \(\widetilde{\mathrm{DM}}(k,\mathbb{Z})[\eta^{-1}]\) by \cite[Theorem 5.14]{Y1}.
\end{remark}
{}

\end{document}